\documentclass[a4paper,11pt]{article}

\usepackage[english]{babel}
\usepackage{amsmath,amssymb,amsthm,amsthm}
\usepackage{mathrsfs}
\usepackage{hyperref}
\usepackage{soul,xcolor}

\usepackage[noadjust]{cite} 
\usepackage{cancel}
\usepackage{dashbox}
\usepackage{xcolor}
%%%%%%%%%% Start TeXmacs macros

\RequirePackage{geometry}
\geometry{a4paper,portrait,left=1cm,right=1cm,top=2.5cm,bottom=2.5cm}

% big chi

%_______________________________

%\usepackage{refcheck}
\usepackage{chemformula}
\def\R{\mathbb{R}}

\newtheorem{theorem}{Theorem}
\newtheorem{proposition}[theorem]{Proposition}
\newtheorem{corollary}[theorem]{Corollary}

\theoremstyle{definition}
\newtheorem{definition}[theorem]{Definition}
\newtheorem{remark}[theorem]{Remark}

\date{}
%%%%%%%%%%%%%%%%%%%%%%%%%%%%
% COMANDI DEGLI STUDENTI 
%%%%%%%%%%%%%%%%%%%%%%%%%%%%%

% +++++++++++++++++++++++++++   Disegno    
%\usepackage{tikz}
%\usepackage{tikz-cd}
%
% 
%\usetikzlibrary{babel}
%\usetikzlibrary{decorations.pathmorphing}

\usepackage{cancel}
\usepackage{dashbox}
%%%%%%%%%%% COMANDI DM  %%%%%%%%%%%
\usepackage{multirow}
\usepackage{bigdelim}
\usepackage{makecell}
\usepackage{mathrsfs} 
\usepackage{framed}
%%%%%%%%%%%  FINE COMANDI DM %%%%%%%%%%%%%%%%%%%%%

% +++++++++++++++++++++++++++   Matematica   +++++++++++++++++++++++++++++++++++
\usepackage{amsmath}
\usepackage{amsfonts}
\usepackage{amsthm}
\usepackage{amssymb}
\usepackage{blkarray}
\usepackage{cases}
\usepackage{xfrac}
\usepackage{mathtools}
\usepackage{centernot}
% +++++++++++++++++++++++++++   Equazioni su pi\`{u}righe   +++++++++++++++++++++++
 % Scorciatoia per i commenti nei blocchi di equazioni allineate

\newtheorem{assumption}[theorem]{Assumption}

\usepackage{environ}

\usepackage{lipsum}

%
% Custom font for a frame.
%  

\definecolor{UNIMIblue}{rgb}{0.0,0.2, 0.4}  
\definecolor{ivory}{rgb}{1.0, 1.0, 0.94}
  
\definecolor{grey}{rgb}{0.50,0.50,0.5}
\definecolor{orange}{rgb}{1.0,0.8,0.55}
\definecolor{gelb}{rgb}{0.0,1.0,1.0}
\definecolor{weiss}{rgb}{1.0,1.0,1.0}
\definecolor{darkgreen}{rgb}{0,0.4,0}
\definecolor{rltred}{rgb}{0.75,0,0}
\definecolor{rltgreen}{rgb}{0,0.5,0}
\definecolor{rltblue}{rgb}{0,0,0.75}
\definecolor{lavender}{rgb}{.7 , .7 , .8}
\definecolor{darkorange}{rgb}{1,0,0.0}
\definecolor{darkorange}{rgb}{0,0,1.0}
%\color{blue}

 \parindent=0pt
%%%%%%%%%%  TITOLI %%%%%%%%%%%%%%%%%%%%%%%%%%%

\title{Well-posedness of a reaction-diffusion model with stochastic dynamical boundary conditions}

\author{Mario Maurelli\thanks{Dipartimento di Matematica, Universit\`a di Pisa, Largo Bruno Pontecorvo 5, 56127 Pisa, Italy; email: mario.maurelli@unipi.it}, Daniela Morale\thanks{Dipartimento di Matematica, Universit\`a degli Studi di Milano, via Cesare Saldini 50, 20133 Milano, Italy, Italy; email: daniela.morale@unimi.it}, Stefania Ugolini\thanks{Dipartimento di Matematica, Universit\`a degli Studi di Milano, via Cesare Saldini 50, 20133 Milano, Italy; email: stefania.ugolini@unimi.it}}

\begin{document}

 	\maketitle
 
 \abstract{We study the well-posedness of a nonlinear reaction diffusion partial differential equation system on the half-line coupled with a stochastic dynamical boundary condition, a random system arising from the description of the chemical reaction of sulphur dioxide with calcium carbonate stones. The boundary condition is given by a Jacobi process, solution to a stochastic differential equation with a mean-reverting drift and a bounded diffusion coefficient. The main result is the global existence and the pathwise uniqueness of mild solutions. The proof relies on a splitting strategy, which allows to deal with the low regularity of the dynamical boundary condition.
 }
 
 \medskip
 
 \textbf{MSC2020}: 35R60, 60H15, 35K57, 60H30
 
 \textbf{Keywords}: stochastic dynamical boundary conditions, nonlinear reaction-diffusion PDEs, application to sulphation.

\section{Introduction}

This paper concerns the study of the well-posedness and regularity  of the solution of the following random system:
\begin{equation} \label{eq:deterministic_intro}
	\begin{split}
	\frac{\partial }{\partial t}  (\varphi s)   &=	\nabla  \cdot(\varphi  \nabla s )  -\lambda \varphi s c ,   \qquad  (0,\infty)\times (0,T)\\
	\partial_t c &= -\lambda \varphi s c,  
	\end{split}
\end{equation}
with $\lambda\in \mathbb R_+$,  
coupled with initial conditions $s(x,0)= s_0(x), \,
c(x,0)= c_0(x)$ for $x\in \mathbb R_+$, $\varphi=\varphi(c)$, where randomness is due to the coupling of \eqref{eq:deterministic_intro} with a stochastic dynamical boundary condition  for $s$, given by
$$
s(t,0)=\psi_t,  \quad t\in [0,T].
$$
More precisely, the idea is to consider as boundary condition the stochastic process $\psi_t$ given by the unique bounded solution of the following stochastic differential equation of It\^o type 
\begin{equation}\label{eq:stochastic_boundary_intro}
	d\psi_t = \alpha(\gamma-\psi_t) dt +\sigma \sqrt{\psi_t\left(\eta-\psi_t\right)}dW_t,
\end{equation}
where $\alpha,\gamma,\sigma$ and $\eta$ are positive parameters and $W\equiv\{W_t\}_{t\in [0,T]}$ is a Wiener process.

\medskip

The deterministic system \eqref{eq:deterministic_intro}, endowed with a constant boundary condition, has been introduced  in  \cite{2004_ADN} as an adimensionless model providing a quantitative description of the diffusion and the chemical action of sulphur dioxide (\ch{SO2}) on the porosity of calcium carbonate stones (\ch{CaCO3}); it is assumed that polluted air (and so the sulphur dioxide) diffuses through the pores of the stones and interacts with the surface of the pores. In \eqref{eq:deterministic_intro} $s$ stands for the porous concentration of \ch{SO2}, namely the concentration taken with respect to the volume of the pores, and $c$ for the local density of \ch{CaCO3}.   The   function $\varphi(c)$ is the porosity (volume of pores/total volume), which is supposed to depend only on the density of calcite$$
\varphi(c)=A+B c.$$ The porosity is related to the concentration $\rho_s$ of \ch{SO2} (fraction of total volume occupied by \ch{SO2}) by the relation $ 	\rho_s =  \varphi(c) s.$
For a detailed derivation of the system \eqref{eq:deterministic_intro}, the interested reader may refer to \cite{2004_ADN}, where the authors investigate the qualitative behavior of the solution on the half-line. 
To the best of our knowledge, all the dynamical systems considered so far describing the degradation of calcium carbonate due to the attack of sulphur dioxide are deterministic ones, with the exception of \cite{2019_BCFGN_CPAA}, where a new nonlinear differential system  for marble sulphation including the surface rugosity, a function obeying a damage-like equation which describes the microscopic variation of a surface with respect to a flat configuration, is introduced.  During the numerical simulations of the deterministic system a random surface rugosity on the boundary based on Weibull'statistics is considered, while here a dynamical model for the boundary condition is proposed. 

\medskip

The mathematical properties of the  system \eqref{eq:deterministic_intro} on the real line $\mathbb R$   have been  studied in \cite{2005_GN_NLA,2007_GN_CPAA}  in the case of  constant boundary conditions. In \cite{2008_GN_CPDE} the well-posedness of the model  is faced in the case $x\in (0,\infty)$, as in the present paper,  with a boundary condition $\psi$ for $s$   given by a non negative function with $W^{1,1}$ regularity in time; in the same paper  the fast reaction limit and the large time behaviour is also discussed.  See also \cite{2004_ADN} for the mathematical study of a generalization of the deterministic model, with Dirichlet boundary conditions for the unknown $s$ expressed in terms of an assigned space-time function. In \cite{2004_ADN,2005_GN_NLA} the boundary condition for $s$ is assumed to be constant, even though, at least as far as numerical analysis is concerned, the authors state that the same analysis may be performed in the case where $s(0,t)$ is a given pre-selected bounded measurable positive function. They prove   existence and uniqueness results and some regularity properties of
global solutions, extending existence and uniqueness results of local solutions, at least for smooth initial data as in \cite{1997_dAncona}. 

\medskip

Similar evolution problems with  stochastic dynamical boundary conditions has been studied with particular attention to the case in which at the boundary the solution acts as a white noise; see, for example, \cite{2002_AB, 2015_barbu_bonaccorsi_Tubaro_JMAA,2017_Bon_Zan,2006_Bon_Zig,2015_Brzezniak_russo,1993_daPrato_Zabczyk,2004_chueshov,2007_debussche_Fuhrman_Tessitore} and the literature therein.
Particular attention has been devoted to the case where, at the boundary, the solution acts as a white noise. For example, in \cite{ 2015_Brzezniak_russo,1993_daPrato_Zabczyk} the framework for the analysis of Poisson and heat equations excited by the white in time and/or space noise on the boundary is considered, and  existence and uniqueness  of weak solutions in the space of distributions are proven. Closer to our setting are the works in \cite{2006_Bon_Zig, 2004_chueshov}, where the solution satisfies a stochastic differential equation at the boundary; these works use an approach via semigroup theory to show existence and uniqueness of mild solutions; see also \cite{2015_barbu_bonaccorsi_Tubaro_JMAA}. The difference of the present work with respect to \cite{2006_Bon_Zig, 2004_chueshov} is the presence of the equation for $c$, which makes the system \eqref{eq:deterministic_intro} not strongly parabolic.
%for the analysis of linear parabolic equations with nonlinear flux at the boundary driven by Gaussian boundary noise are faced.  
\medskip

In this paper, we introduce the stochastic boundary condition \eqref{eq:stochastic_boundary_intro} with the aim to study the existence, uniqueness and regularity of the solution of the random system \eqref{eq:deterministic_intro}-\eqref{eq:stochastic_boundary_intro}. The motivation of the choice of equation  \eqref{eq:stochastic_boundary_intro} derives from a statistical study of the source term $\psi$ giving  the concentration of sulfur dioxide in the air. 
Indeed, as observed in \cite{2023_Arceci_Giordano_Maurelli_Morale_Ugolini} for the time series of the \ch{SO2} concentration (in air) in the Milano area, it is clearly not realistic to consider such concentration as a constant;   evidence of a random evolution can be easily recognized. Within a preliminary study, the authors  propose the same Pearson process, solution of a mean reverting equation with bounded Jacobi noise as a model for the boundary function $\psi_t $, taking advantage of an estimation procedure based on the time series of real data, combined with a numerical simulation of the system \eqref{eq:deterministic_intro}-\eqref{eq:stochastic_boundary_intro}. 
This class of diffusion processes \eqref{eq:stochastic_boundary_intro} is well-studied both in the biological literature, where they are known as Wright-Fisher processes, and in the mathematical one known as Jacoby processes. See, e.g.\cite{2023_Grothaus} for a recent approach to these processes via Dirichlet forms. The Pearson processes are interesting both for the modeling and theoretical point of view, since they share the feature that under certain conditions upon the process parameters $\alpha,\gamma,\sigma,\eta $ it can be proved that, when it starts within $[0,\eta], \eta >0 $, it remains positive and bounded in the finite region $[0,\eta]$. Furthermore one can establish that \eqref{eq:stochastic_boundary_intro} admits an invariant probability distribution, which is a generalized Beta distribution. It is a suitable way to introduce a bounded noise, in spite of the fact that it is driven by a Wiener process $W$, which is a very important issue in the applications \cite{2013_donofrio,2017_donofrio_flandoli}.

\medskip

The main mathematical problem with the stochastic boundary condition \eqref{eq:stochastic_boundary_intro} is the fact that this condition is no longer in $W^{1,1}$, even tough it is more regular than the white noise. More precisely, the diffusion process at the boundary inherits the same regularity of the Wiener process $W$, that is $\psi$ has almost surely trajectories in $C^\beta$, for every $\beta\in(0,1/2)$, i.e. the class of  H\"older continuous functions of order less than one half. So the well-posedness of the PDE needs a carefully investigation.  Indeed,  randomness at the boundary is coupled with a system of nonlinear parabolic equations which is not strongly parabolic  and only some energy estimates are available for analysing it, as noticed in \cite{2004_ADN,2005_GN_NLA}. We stress that the strategy of the proof using in \cite{2008_GN_CPDE} cannot be adopted here, essentially because of the non differentiability of the boundary function  $\psi_t$ (see Remark \ref{rmk_Natalini1}).  

\medskip

Our approach is based on a splitting strategy, in which the solution $(s,c)$ of the system \eqref{eq:deterministic_intro}-\eqref{eq:stochastic_boundary_intro}  is seen as $(u+v,c),$ where from one side $u$ is solution to the heat equation coupled with the stochastic boundary condition \eqref{eq:stochastic_boundary_intro}, not depending upon the less regular function $c$; on the other side $(v,c)$ is a solution of a non linear system,  depending upon $u$ (in a functional way), but endowed with a deterministic constant boundary condition. This strategy allows us to compensate the irregularity of the stochastic boundary path with the regularity of $u$, coming from the heat equation. 
In particular, concerning $u$, the $W^{1,q}$ norm of $u$ can be controlled, in a pathwise sense, by the $C^\beta$ norm of $\psi$, for $q<1/(1-2\beta)$ (see Proposition \ref{prop:heat_bd}); this is a rather classical result but we give here a self-contained proof based on fractional Sobolev spaces and interpolation theory.  
 
As far as concerns the couple of variables $(v,c)$, the equation satisfied by $v$ is nonlinear and nonlocal, due to the dependence of $c$ on $v$ itself, involving also $u$ and $\partial_x u$. The assumptions of classical existence theorems for nonlinear scalar parabolic equations are not verified  because of the lack of $L^\infty$ estimate of the quantity $\partial_x c$, as mentioned in \cite{2004_ADN,2005_GN_NLA,2008_GN_CPDE}, where the authors  prove a global existence result for suitable weak solutions, based on the control of $\partial_x c$ in $L^2$. We partially use their results, but in our case we cannot apply the same strategy, due to the terms containing $u$ and $\partial_x u$. To control these terms, we use the $W^{1,q}$ bound on $u$ in Proposition \ref{prop:heat_bd}. With this argument, we get the $L^2$ bound on a linearized system, Proposition \ref{prop:main_bound_v}, and the a priori $L^2$ estimate on $s$, Proposition \ref{cor:a_priori_bd}, which represent the main novelty of our proof. Our principal result states the well-posedness for the nonlinear system \eqref{eq:deterministic_intro} under the hypothesis $\psi \in C^\beta([0,T])$ with $  \beta  \in (1/4 ,1/2) $.
The lower bound $\beta>1/4$ on the H\"older exponent for $\psi$ comes from the $L^2$ integrability condition required for $\partial_x c$, which in turn forces  the solution to the heat equation $u$ to be in $W^{1,2}$.

\medskip
  
To sum up, we extend the well-posedness result in \cite{2008_GN_CPDE} by adding a random dynamical boundary condition, given by the solution to an SDE with non Lipschitz diffusion coefficient. In particular, from the analytical point of view, we allow the boundary condition to have lower regularity with respect to the case in \cite{2008_GN_CPDE}, and precisely to be only $\beta$-H\"older continuous, with $\beta>1/4$. It would be interesting to investigate the role of an intermediate irregularity between the one typical of Brownian motion and the one of white noise. We plan to work on this direction in a future paper.
 
\medskip

The paper is organized as follows. 
In Section 2 we introduce the  PDE-ODE model with a stochastic dynamical boundary condition. We discuss and prove the principal properties of the boundary process, as boundness and H\"older continuity. The splitting strategy and our definition of mild solution are specified. The solution of the heat equation with the stochastic boundary condition above described is analysed in Section 3 with the aim to establish for it  continuity  results in terms of the $C^\beta $ norm of the stochastic boundary function. In Section 4 a study of a version  of  a linear equation for $\tilde{s}=\tilde{u}+\tilde{v}$ with prescribed regularity upon the coefficient is described. In particular the existence of  a  mild solution $\tilde{v}$, and of $\tilde{s}$ in $L^\infty([0,T],W^{1,p}(\mathbb R_+))$ is established. Such results are used in Section 5, where the main well-posedness achievements are illustrated.

\bigskip
\begin{paragraph}{Notations.} Throught the paper, for sake of simplicity  we have  introduced in the proofs  the following notations for the involved Banach  spaces $L^p_x:=L^p(\mathbb R_+)$ in space and $L_t^p(L_x^q):=L^p([0,T],L^q(\mathbb R_+))$ in time and space. Similar notation is considered for the fractional Sobolev spaces. Furthermore, given a function $f$ defined on $[0,T]\times \mathbb R_+$, we define the marginals as $f_t:=f(t,\cdot)$, for $t\in [0,T]$ and $f_x:=f( \cdot,x)$, for $x\in \mathbb R_+.$
\end{paragraph}

\begin{paragraph}{Acknowledgements.} We acknowledge support from Universit\`a degli Studi di Milano through the SEED grant `SciCult: Modellizzazione matematica e analisi SCIentifica per i beni CULTurali: previsione e prevenzione del degrado chimico e meccanico di pietre monumentali in ambienti outdoor'. Part of the work was done while M.M. was at Universit\`a degli Studi di Milano. We thank Marco Alessandro Fuhrman for proposing this class of problems, Stefano Bonaccorsi, Cecilia Cavaterra and Roberto Natalini for relevant discussions on the topic. MM, DM and SU are members of the Istituto Nazionale di Alta Matematica, group GNAMPA.
\end{paragraph}

\section{A PDE - ODE model with a  nonsmooth boundary condition}

Let us introduce and describe the initial boundary value problem under study.   Hence, let us consider the following reaction diffusion system  for the couple $(s,c)$

\begin{eqnarray} 
	\frac{\partial }{\partial t}  (\varphi s)   &=&	\nabla  \cdot(\varphi  \nabla s )  -\lambda \varphi s c ,   \qquad  (0,\infty)\times (0,T)\label{eq:det_nostro_s}\\
	\partial_t c &=& -\lambda \varphi s c , \label{eq:det_nostro_c} 
\end{eqnarray}
where $\lambda\in \mathbb R_+$  with non-negative initial conditions 
\begin{eqnarray} \label{eq:initial_condition}
	s(x,0)= s_0(x),\qquad 
	c(x,0)= c_0(x), \qquad x \in (0,\infty).
\end{eqnarray}

\medskip

\paragraph{ An example of stochastic dynamical boundary condition}
We study the case in which the  boundary function for the solution $s$ is random and it is given by a stochastic process
\begin{eqnarray} 
	s(0,t) &=&   \psi_t  , \qquad     \psi_t \in C^{\beta}, \quad 1/4 <\beta< 1/2.
	\label{eq:stoc_nostro_boundary_cond} 
\end{eqnarray}
The  main example we have in mind,  and our motivation to consider the above regularity assumptions on $\psi$, is a specific Pearson process $\{\psi_t\}_{t\in [0,T]}$, solution to the following mean reverting generalized Jacobi stochastic differential equation 
\begin{equation}\label{eq:jacobi} 
	\begin{split}
		d\psi_t &= \alpha(\gamma-\psi_t) dt +\sigma \sqrt{\psi_t\left(\eta-\psi_t\right)}dW_t,  \\
		\psi_0 &\in [0,\eta],
	\end{split}
\end{equation}
with $\alpha,\sigma,\gamma,\eta\in \mathbb R_+$, and $\gamma\le \eta$. Overall we have a PDE problem with a boundary condition which is not only stochastic but has less regularity. Indeed, the following properties hold.

\begin{proposition}\label{prop:stoch_properties} Let us consider equation \eqref{eq:jacobi}, with $\alpha,\sigma,\gamma,\eta\in \mathbb R_+$ and $\gamma\le \eta$. Assume that $\psi_0 \in [0,\eta]$.
	The solution of equation \eqref{eq:jacobi} exists (globally on $[0,T]$) and is pathwise unique.   Furthermore, let us suppose that  the following conditions upon the parameters hold
	\begin{eqnarray}\label{eq:coefficients_SDE}
		\alpha\gamma  &\geq& \frac{\sigma^2\eta}{2};  \qquad \qquad  
		\alpha(\eta-\gamma) \geq \frac{\sigma^2\eta}{2}.
	\end{eqnarray}  Then  for any $t\in (0,T], $ 
	\begin{equation}\label{eq:psi_limits}
		 \psi_t \in (0,\eta).
	\end{equation} 
	In particular, we have $\psi_t\in L^\infty(\mathbb R_+)$.
\end{proposition}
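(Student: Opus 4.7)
The plan is to handle, in turn, existence and pathwise uniqueness, confinement in $[0,\eta]$, and strict boundary non-attainment under \eqref{eq:coefficients_SDE}.

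For pathwise uniqueness I would invoke the classical Yamada--Watanabe one-dimensional criterion: after extending $x\mapsto\sigma\sqrt{x(\eta-x)}$ by zero outside $[0,\eta]$, the diffusion coefficient is $1/2$-H\"older on $\mathbb R$, while the drift $\alpha(\gamma-x)$ is Lipschitz. For weak existence I would approximate \eqref{eq:jacobi} by SDEs with smooth Lipschitz coefficients (e.g.\ mollifying the square root near the endpoints), obtain tightness in $C([0,T])$ via Kolmogorov's criterion, and extract a weak solution through Skorokhod's representation; combined with pathwise uniqueness this gives a unique strong solution, a priori defined up to the first exit from $[0,\eta]$.

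To show this exit time is a.s.\ infinite, I would apply It\^o's formula (via a smooth regularization) to $((\psi_t)_-)^2$: the zero-extended diffusion vanishes on $\{\psi_t<0\}$ and the drift $\alpha(\gamma-\psi_t)$ equals $\alpha\gamma\ge 0$ at the zero level, so $\mathbb{E}[((\psi_t)_-)^2]\le 0$, hence $\psi_t\ge 0$. A symmetric argument on $((\psi_t-\eta)_+)^2$, using $\gamma\le\eta$, gives $\psi_t\le\eta$. This simultaneously provides global existence and the $L^\infty$ bound.

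Finally, under \eqref{eq:coefficients_SDE}, I would run a Feller-type logarithmic test. Set $\tau_n:=\inf\{t\ge 0:\psi_t\le 1/n\}$ and apply It\^o's formula to $-\log\psi_{t\wedge\tau_n}$; the bounded-variation part equals
\[
\int_0^{t\wedge\tau_n}\!\left(\frac{\sigma^2\eta/2-\alpha\gamma}{\psi_r}+\alpha-\frac{\sigma^2}{2}\right)dr,
\]
whose first summand is non-positive thanks to $\alpha\gamma\ge\sigma^2\eta/2$. The local-martingale part is a true martingale on $[0,t\wedge\tau_n]$ (since $\psi\ge 1/n$ there), so taking expectation yields
\[
\mathbb{E}[-\log\psi_{t\wedge\tau_n}]\le -\log\psi_0+\left(\alpha-\tfrac{\sigma^2}{2}\right)t.
\]
Since $-\log\psi_{t\wedge\tau_n}\ge\log n$ on $\{\tau_n\le t\}$ and $\ge-\log\eta$ elsewhere, this forces $\mathbb{P}(\tau_n\le t)\to 0$ as $n\to\infty$, so $\psi_t>0$ a.s. The same argument applied to $-\log(\eta-\psi_t)$ under $\alpha(\eta-\gamma)\ge\sigma^2\eta/2$ gives $\psi_t<\eta$ a.s. The main delicate point I anticipate is the borderline equality case in \eqref{eq:coefficients_SDE}, where the coefficient of $1/\psi_r$ in the log-drift vanishes rather than blowing the drift to $-\infty$; the localization still succeeds because a uniform-in-$n$ bound on $\mathbb{E}[-\log\psi_{t\wedge\tau_n}]$ alone rules out accumulation of mass at the boundary.
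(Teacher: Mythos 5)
Your argument is correct in substance but follows a genuinely different route from the paper for the key part of the statement. For existence and pathwise uniqueness you do essentially what the paper does (Skorokhod-type existence for continuous coefficients of linear growth, Yamada--Watanabe with the $1/2$-H\"older modulus of the zero-extended diffusion coefficient). For the confinement and the strict bounds $\psi_t\in(0,\eta)$, however, the paper simply invokes the Feller scale-function/speed-measure classification: it computes $p=2\alpha\gamma/(\sigma^2\eta)$ and $q=2\alpha(\eta-\gamma)/(\sigma^2\eta)$ and cites that $0$ and $\eta$ are entrance boundaries precisely when $p\ge1$ and $q\ge1$, i.e.\ under \eqref{eq:coefficients_SDE}. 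You instead give a self-contained Lyapunov argument: the moment bound on $((\psi_t)_-)^2$ and $((\psi_t-\eta)_+)^2$ for the a priori confinement in $[0,\eta]$ (a step the paper leaves implicit), and then the McKean-type logarithmic test on $-\log\psi$ and $-\log(\eta-\psi)$ for non-attainment of the endpoints. Your drift computation for $-\log\psi$ is correct, the stopped stochastic integral is indeed a true martingale on $\{\psi\ge 1/n\}$, and your observation that the borderline equality in \eqref{eq:coefficients_SDE} still yields a uniform-in-$n$ bound is right. What your approach buys is a quantitative, citation-free proof; what the paper's approach buys is brevity and, importantly, the entrance-boundary statement itself.

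The one genuine caveat is the initial condition: the proposition allows $\psi_0\in[0,\eta]$, including the endpoints, and asserts $\psi_t\in(0,\eta)$ for $t>0$. Your logarithmic test is vacuous when $\psi_0\in\{0,\eta\}$ (then $-\log\psi_0=+\infty$ and $\tau_n=0$ for every $n$), so as written it only covers $\psi_0$ in the open interval. To match the full statement you would need an additional entrance-type step --- e.g.\ show that from $\psi_0=0$ the process leaves the boundary instantaneously (the drift there is $\alpha\gamma>0$ and the noise vanishes at the boundary), and then restart from a strictly interior point via the Markov property before applying your localization. The paper's cited classification absorbs this case automatically, since ``entrance boundary'' means precisely that the process may start there but immediately enters and never returns.
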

\begin{proof} We give a sketch of the proof. See, for example, \cite{1981_karlin_taylor}. We notice that the drift  coefficient  $\alpha_1(x)=\alpha(\gamma -x)$ is Lipschitz continuous for any $x\in \mathbb R$, while the diffusion  coefficient $\sigma_1(x)=\sigma\sqrt{x(\eta-x)}$ is only continuous for any $x\in [0,\eta]$. Furthermore, for any $x\in [0,\eta]$,  \begin{align*}
		|\alpha_1(x)|^2 & = |\alpha\gamma - \alpha x|^2  \leq \alpha^2(|\gamma|^2+|2\gamma x|+|x|^2) \leq C_1(\alpha,\gamma)\biggl(1+|x|^2\biggr)\\
		|\sigma_1(x)|^2 & = |\sigma\sqrt{x(\eta-x)}|^2 \leq |\sigma|^2( |\eta x| + |x|^2 ) \leq C_2(\sigma,\eta) (1+|x|^2)
	\end{align*}
	By classical results due to Skorokhod, we get that the solution exists \cite{1995_skorokhod}. The  pathwise  uniqueness can be proven by applying the Yamada-Watanabe uniqueness criterion in  \cite{1971_yamada_watanabe}, since
	$$
	|\sigma_1(x)-\sigma_1(y)|\leq \rho(|x-y|), \quad \forall x,y\in \R,\quad i=1,2,\dots,  
	$$ with $\rho(x)=\sqrt{x}$. Indeed, 
	\begin{align*}
		|\sigma_1(x)-\sigma_1(y)| &  \leq \sigma \sqrt{|\eta x - x^2 - \eta y + y^2|} \\
		& \leq \sigma \sqrt{|x-y|}\sqrt{|\eta-x-y|} \\
		& \leq C_3(\eta)\cdot |x-y|^{1/2}.
	\end{align*} 
	Hence, 	  the diffusion coefficient   $\sigma_1(x)$ is H\"older continuous with index $1/2$,   for any $x\in [0,\eta]$.
	
	\medskip
	
	In order to obtain the bound \eqref{eq:psi_limits} for the solution, we consider 
	the well-known Feller  classification of  the boundaries \cite{1981_karlin_taylor,2016_etheridge}. In the  case under study, given a little generalization of the Jacobi process case, we have that the scale function $	scale(x)$ and the speed density $m(x)$ are defined as 
	$$
	scale(x) = \frac{1}{x^p(\eta-x)^q}, \qquad \qquad 
	m(x) =  \frac{1}{\sigma^2} x^{p-1}(\eta-x)^{q-1},
	$$
	with $$	p = \frac{2\alpha\gamma}{\sigma^2\eta},\qquad\quad q = \frac{2\alpha(\eta-\gamma)}{\sigma^2\eta}.$$
	The boundary $x=0$ is an \textit{entrance boundary} if and only if $p\ge 1$  while   $x=\eta$ is an \textit{entrance boundary} if and only if $q \ge 1$ and these two condition are satisfied whenever \eqref{eq:coefficients_SDE} hold (see \cite{1981_karlin_taylor},\cite{2023_Arceci_Giordano_Morale_Ugolini}).
\end{proof}
	
	We also recall the classical H\"older regularity property of solutions to the SDE:

\begin{proposition}
	 Under the same assumptions of Proposition \ref{prop:stoch_properties}, the solution of equation \eqref{eq:jacobi} has trajectories almost surely  in $C^\beta([0,T]), \, \beta \in (0,1/2)$.
\end{proposition}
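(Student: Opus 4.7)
The plan is to apply Kolmogorov's continuity criterion, exploiting the fact that $\psi$ takes values in the bounded interval $[0,\eta]$ by Proposition \ref{prop:stoch_properties}. Since both coefficients $\alpha_1(x)=\alpha(\gamma-x)$ and $\sigma_1(x)=\sigma\sqrt{x(\eta-x)}$ are continuous on $[0,\eta]$, they are bounded along trajectories of $\psi$: there is a deterministic constant $K=K(\alpha,\gamma,\sigma,\eta)$ such that $|\alpha_1(\psi_r)|\le K$ and $|\sigma_1(\psi_r)|\le K$ for all $r\in[0,T]$, almost surely.

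For $0\le s\le t\le T$, I would write the SDE in integral form
\begin{equation*}
\psi_t - \psi_s = \int_s^t \alpha_1(\psi_r)\,dr + \int_s^t \sigma_1(\psi_r)\,dW_r,
\end{equation*}
and estimate the two pieces separately in $L^{2p}(\Omega)$ for an arbitrary integer $p\ge 1$. The drift term is bounded pathwise by $K|t-s|$. For the stochastic integral I would apply the Burkholder--Davis--Gundy inequality to obtain
\begin{equation*}
\mathrm{E}\!\left[\left|\int_s^t \sigma_1(\psi_r)\,dW_r\right|^{2p}\right] \le C_p\,\mathrm{E}\!\left[\left(\int_s^t \sigma_1(\psi_r)^2\,dr\right)^{p}\right] \le C_p K^{2p}|t-s|^{p}.
\end{equation*}
Combining these two bounds yields $\mathrm{E}[|\psi_t-\psi_s|^{2p}] \le C(p,T,K)\,|t-s|^{p}$ for every $p\ge 1$.

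Kolmogorov's continuity theorem then produces a modification of $\psi$ whose trajectories are almost surely $\beta$-H\"older continuous for every $\beta<(p-1)/(2p)$. Sending $p\to\infty$ we cover every exponent $\beta\in(0,1/2)$. Since pathwise uniqueness (already established in Proposition \ref{prop:stoch_properties}) guarantees that two modifications are indistinguishable, the H\"older regularity transfers to the original solution and the proof is complete.

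I do not foresee a real obstacle: the boundedness of $\psi$ makes the Skorokhod-type moment estimates trivial, so the only slightly technical step is invoking the BDG inequality on the interval $[s,t]$ (one may equivalently apply it on $[0,T]$ to a localized martingale). The argument is completely standard and mirrors the classical proof of H\"older continuity for Brownian motion, with the Wiener increments replaced by bounded stochastic integrals.
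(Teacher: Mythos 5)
Your proposal is correct and follows essentially the same route as the paper: boundedness of $\psi$ in $[0,\eta]$, the Burkholder--Davis--Gundy inequality for the martingale part, a trivial bound for the drift, and the Kolmogorov continuity criterion with the exponent sent to infinity to cover all $\beta\in(0,1/2)$. The only cosmetic difference is your final appeal to pathwise uniqueness to identify the H\"older modification with $\psi$; the cleaner reason is simply that $\psi$ is already a.s.\ continuous, and two continuous modifications of the same process are indistinguishable.
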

\begin{proof} If $\psi$ is a solution to \eqref{eq:jacobi}, then, for any $s,t \in [0,T]$, one may write, for $p > 2$
	\begin{equation}\label{eq:prop_hoelder}
		\left|\psi_t - \psi_s\right|^p \le \left| \int_s^t \alpha(\gamma-\psi_\tau) d\tau \right|^p + \left|\int_s^t  \sigma \sqrt{\psi_\tau\left(\eta-\psi_\tau\right)}dW_\tau\right|^p.  
	\end{equation}
	By Burkholder-Davis-Gundy inequality \cite{2000_Rogers_williams}(IV,42), and by the bound \eqref{eq:psi_limits} one obtained  for every $2 < p<\infty$, there exists $C>0$ such that, for every $s<t$,
	\begin{align*}
		\mathbb{E}\left[\left|\int_s^t \sigma \sqrt{\psi_\tau\left(\eta-\psi_\tau\right)} dW_\tau\right|^p\right] \le C \mathbb{E}\left[\left(\int_s^t \left|\sigma \sqrt{\psi_\tau\left(\eta-\psi_\tau\right)}\right|^2 dr\right)^{p/2}\right]\le C(\eta) \left| t-s\right|^{p/2}.
	\end{align*}
	Then from \eqref{eq:prop_hoelder}, we have
	$$
	\mathbb{E}\left[\left|\psi_t - \psi_s\right|^p\right] \le C(\alpha,\gamma)|t-s|^p+
	C(\eta) \left| t-s\right|^{p/2}\le C(\alpha,\gamma,\eta) \left| t-s\right|^{p/2}$$
	By Kolmogorov continuity criterion,  with $r_1=p$ and $1+r_2= p/2$,
	  the solution $\psi_t$  is $C^{\beta}$-H\"older continuous   for any $\beta\in (0,r_2/r_1)=(0,1/2 -1/p)$ and $p>2$, that is $\beta\in (0,1/2)$.
\end{proof}

\begin{remark}
  As said before, the Jacobi process solving \eqref{eq:jacobi} is the main motivation to consider $\psi$ with H\"older regularity in time. However, our analysis will be deterministic and will apply to sample paths of any processes with $C^\beta$ regularity in time, for $\beta>1/4$ (and satisfying the Assumption \ref{hp:main_result} below).
\end{remark}

\paragraph{A model for the porosity}

 From now on, unless otherwise stated, $\psi:[0,T]\to \R$ will be a given function.

The function $\varphi$ in system \eqref{eq:det_nostro_s}-\eqref{eq:det_nostro_c} refers to the porosity, which is the fraction of void volume with respect to the total one. Considering the literature on the subject, we assume (see, for example, \cite{2004_ADN})
\begin{equation}\label{eq:phi_c_AB}
	\varphi(c) = A+Bc.
\end{equation}
 with $A>0$, $B\neq 0$. By the scaling $s^\prime =|B|s$, $c'=|B|c/A$, $\varphi^\prime(c^\prime)=A\varphi(c)$, we can reduce ourselves to the case
\begin{equation}
    A=1,\quad B=\pm 1;\label{eq:AB_hp}
\end{equation}
hence we will assume \eqref{eq:AB_hp} from now on. In the specific case of marble sulphation the value $B=-1$ is more appropriate.

\begin{proposition}
	If equation \eqref{eq:phi_c_AB} holds, the system 
	\eqref{eq:det_nostro_s}-\eqref{eq:stoc_nostro_boundary_cond} becomes	
	\begin{eqnarray} 
		\partial_t    s   &=&	\partial_x^2 s + b_c(t,x) \partial _x s + \gamma_c(t,x) s(Bs- 1)  \label{eq:s2}\\
		\partial_t c &=& - \lambda s (1+B c) c  \label{eq:c2}
	\end{eqnarray}	
	with \begin{eqnarray} 
		b_c(t,x)&=&B\frac{\partial_x c}{(1 + Bc)} \label{eq:beta2}\\
		\gamma_c(t,x) &=& \lambda c   \label{eq:gamma2}
	\end{eqnarray}
\end{proposition}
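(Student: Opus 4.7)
The statement is essentially a direct algebraic reformulation of the system \eqref{eq:det_nostro_s}--\eqref{eq:det_nostro_c} under the affine ansatz $\varphi(c)=1+Bc$ (with $B=\pm 1$, thanks to the scaling reduction \eqref{eq:AB_hp}). My plan is therefore to expand the derivatives, substitute the equation for $c$ into the equation for $s$, and then divide by $\varphi$.

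First, I would rewrite the left-hand side of \eqref{eq:det_nostro_s} via the product rule:
\[
\partial_t(\varphi s)=\varphi\,\partial_t s + s\,\varphi'(c)\,\partial_t c = \varphi\,\partial_t s + B s\,\partial_t c,
\]
and then use \eqref{eq:det_nostro_c} to replace $\partial_t c = -\lambda \varphi s c$, obtaining
\[
\partial_t(\varphi s)=\varphi\,\partial_t s - \lambda B\,\varphi s^2 c.
\]
Similarly, the diffusive flux on the right expands as
\[
\partial_x(\varphi\,\partial_x s)=\varphi\,\partial_x^2 s + \varphi'(c)\,\partial_x c\,\partial_x s = \varphi\,\partial_x^2 s + B\,\partial_x c\,\partial_x s.
\]
Substituting both identities into \eqref{eq:det_nostro_s} and dividing by $\varphi$, which is legitimate wherever $1+Bc>0$, yields
\[
\partial_t s=\partial_x^2 s + \frac{B\,\partial_x c}{1+Bc}\,\partial_x s - \lambda s c + \lambda B s^2 c,
\]
and factoring the zero-order term as $\lambda c\,s(Bs-1)$ gives \eqref{eq:s2} with the coefficients \eqref{eq:beta2}, \eqref{eq:gamma2}. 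The equation \eqref{eq:c2} is just \eqref{eq:det_nostro_c} with $\varphi$ expanded.

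There is no real obstacle here: the only point to mention is that the division by $\varphi$ requires $1+Bc>0$, which will hold along the solutions we consider (in the case $B=-1$ it amounts to $c<1$, a condition preserved by the ODE \eqref{eq:det_nostro_c} as long as $0\le c_0<1$ and $s\ge 0$, since then $\partial_t c=-\lambda s(1+Bc)c\le 0$ keeps $c$ in $[0,c_0]$). I would state this positivity of $\varphi$ explicitly as a running assumption so that the division is well defined, and otherwise just present the two chain-rule computations above as the entire proof.
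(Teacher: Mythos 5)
Your proposal is correct and follows essentially the same route as the paper's proof: expand $\partial_t(\varphi s)$ by the product rule, substitute $\partial_t c=-\lambda\varphi s c$, expand the diffusive flux, and divide by $\varphi=1+Bc$. Your explicit remark that the division requires $1+Bc>0$ is a sensible addition (the paper addresses this only later, in its assumptions on $c_0$ and the positivity of $\varphi_m$).
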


\begin{proof}
	Equation \eqref{eq:s2} is derived in the following simple way
	\begin{eqnarray*}
		\partial_t  ((1+Bc) s)   &=&	\nabla  \cdot((1+Bc)\nabla s )  -\lambda (1+Bc) s c  \\
		\partial_t  s + s  B {\partial_t}   c + B c 	{\partial_t}   s &=&	\partial_x^2 s +  B \nabla  (c \nabla s)   -\lambda (1+Bc)   c s  \\
		(1+Bc)	 \partial_t   s     &=&	\partial_x^2 s +  B \partial_xc \partial_x s+ Bc \partial^2_x  s -\lambda c (1+Bc) s + B\lambda c (1+Bc)s^2  \\
		(1+Bc)	 \partial_t   s   &=&	(1+Bc)  \partial_x^2 s +  B \partial_x c \partial_x s   + \lambda c s  (1+Bc) (Bs-1) 
	\end{eqnarray*}
	By dividing by $(1+Bc)$ we have the result.
\end{proof}

\medskip
We want to prove an existence and uniqueness result  for  system 
\eqref{eq:s2}-\eqref{eq:gamma2}.

\medskip

\begin{definition}[Heat kernels]  Denoted by $G_1(x)=1/\sqrt{2\pi} e^{-x^2/2} $ the standard Gaussian density, let us introduce the heat kernel $G(t,x)$  for any $(t,x)\in [0,T]\times \mathbb R$ 
\begin{equation}\label{eq:heat_kernel}
	G(t,x) = t^{-1/2} G_1(t^{-1/2}x) = \frac{1}{\sqrt{4\pi t}}e^{-x^2/4t}.
\end{equation}
  On the half-line, we refer to the following kernel on $(t,x)\in [0,T]\times \mathbb R_+$
 \begin{equation}\label{eq:dirichlet_heat_kernel}
\bar{G}(t,x,y)=G(t,x-y)-G(x+y).
\end{equation}  with $G$ as in \eqref{eq:heat_kernel} as the \emph{Dirichlet heat kernel} on the half line.
 
\end{definition}
\begin{definition}[Convolution $*_D$]    We call \emph{convolution with respect to the Dirichlet heat kernel} and we denote it as  $\bar{G}(t,\cdot)*_Df $ the following operator, for any regular $f$ 
	\begin{align}\label{eq:K_conv}
  		\bar{G}(t,\cdot)*_D f \,(x)  = \int_0^\infty (G(t,x-y)-G(t,x+y))f(y) dy = \int_{\mathbb{R}} G(t,x-y)f^{odd}(y) dy,
  	\end{align}
  	where $f^{odd}(x) =1_{x>0}f(x) -1_{x<0}f(-x)$ is the odd extension of $f$.
\end{definition}
\begin{definition}[Bounded positive mild solution]\label{def:mild_solution_s_nonlin}
	We say that a couple $(s,c)$, with $s\in L^\infty([0,T],W^{1,2}(\mathbb R_+))\cap L^\infty\left([0,T]\times \mathbb R_+\right)$ and $c\in B_{b}\left([0,T]\times \mathbb R_+\right)$, the space of bounded Borel functions, is \emph{a bounded positive mild solution} of the nonlinear PDE \eqref{eq:s2}-\eqref{eq:gamma2}  with initial conditions 
 \eqref{eq:initial_condition} and boundary  condition \eqref{eq:stoc_nostro_boundary_cond} if the following happens
 \begin{itemize}
     \item[i)]
 for every $x \in \mathbb R_+$, the function $c(\cdot,x)$ solves \eqref{eq:c2} and \eqref{eq:gamma2}, that is $c$ is explicitly given by 
	\begin{equation}\label{eq:df_mild_solution_c}
	c(t,x) = \frac{c_0(x)}{\varphi(c_0(x))e^{\lambda \int_0^t s(x,\tau)d\tau} -Bc_0(x)},
	\end{equation}
\item[ii)] the function $s$ satisfies
	\begin{align*}
		0\le s(t,x)\le \eta \quad \text{for every }(t,x)\in [0,T]\times \mathbb R_+,
	\end{align*}
	and it is a $L^\infty([0,T],W^{1,2}(\mathbb R_+))$-mild solution of \eqref{eq:s2}, that is
	\begin{align}
	\begin{split}\label{eq:df_mild_solution_s}
		s(t,\cdot) &= -2\int^t_0\partial_x G(t-\tau,\cdot) \psi(\tau)d\tau + \overline{G}(t,\cdot)*_Ds_0\\
		&\quad +\int_0^t \overline{G}(t-\tau,\cdot)*_D\left(b_c(\tau,\cdot)\partial_x s(\tau,\cdot)+\gamma_c(\tau,\cdot) s(\tau,\cdot)\right)(Bs(\tau,\cdot)-1)) ds.
	\end{split}
	\end{align} 
    \end{itemize}
In equation \eqref{eq:df_mild_solution_s}	  $b_c$ and $\gamma_c$ are defined as   \eqref{eq:beta2} and \eqref{eq:gamma2}, respectively. 
\end{definition} 
 Let us remark  in the mild formulation \eqref{eq:df_mild_solution_s} the dependence upon the boundary condition $\psi$ and the initial condition $s_0$, as in \eqref{eq:initial_condition}  and  \eqref{eq:stoc_nostro_boundary_cond}.

\medskip

The idea to show well-posedness of \eqref{eq:s2}-\eqref{eq:gamma2} is to consider a splitting strategy in order to divide the problem into two sub-problems: in the first one we consider the heat equation with the stochastic, H\"older continuous boundary condition, in the second problem we consider a non linear PDE with zero boundary condition. The initial conditions of the original problems are inherited by the latter system.

\paragraph{A splitting strategy}
Let us consider the following splitting of the solution $s$ of Equation \eqref{eq:s2}
$$s=u+v,$$
where the function 
$u$ is the solution of the following  \emph{heat equation with a  non-smooth boundary condition}
\begin{eqnarray} 
	{\partial_t}  u      &=&	\partial_x^2 u   \label{eq:u}\\
	u(x,0)&=& 0, \qquad x \in (0,\infty)\label{eq:u_initial_condition}\\
	u(0,t) &=&   \psi_t  , \qquad     \psi_t \in C^{\beta} ([0,T]), \quad\beta< 1/2\label{eq:u_boundary_condition}
\end{eqnarray} 
and the function $v$ is the solution of the following \emph{deterministic reaction diffusion problem} 

\begin{eqnarray} \label{eq:splitted_v_c_1}
	\frac{\partial }{\partial t}    v  &=&	\partial_x^2 v + b(t,x) \partial _x v- \gamma(t,x) v  + b(t,x) \partial_x u - \gamma(t,x) u +\gamma(t,x)B (u+v)^2 \\
	\partial_t c &=& - \lambda s (A+Bc) c  \label{eq:splitted_v_c_2}
\end{eqnarray}
with \begin{eqnarray*}%\label{eq:splitted_v_c_b}
	b(t,x)&=&B\frac{\partial_x c}{(A+Bc)}, %\label{eq:det_nostro_s 2_beta}
    \\
	\gamma(t,x) &=& \lambda c.   %\label{eq:splitted_v_c_gamma} 
\end{eqnarray*}
The system \eqref{eq:splitted_v_c_1}-\eqref{eq:splitted_v_c_2} has the following initial and boundary conditions
\begin{eqnarray*} 
	c(x,0)&=& c_0(x), \qquad x \in (0,\infty);\\ 
	v(x,0) &=&   s_0(x),\qquad x \in (0,\infty); \\
	v(0,t)&=&0, \qquad \qquad t\in (0,T].
\end{eqnarray*}

\begin{remark}
Let us note that the system for $u$ \eqref{eq:u}-\eqref{eq:u_boundary_condition} is completely autonomous: it does not depend on the function $c$ of the original  system \eqref{eq:s2}-\eqref{eq:gamma2}. Differently, the system for $v$ is coupled with that of \eqref{eq:u}-\eqref{eq:u_boundary_condition}.	
\end{remark}

 Note that the analysis carried out in Section 3 and 4 and all the estimates are valid pathwise, that is for any fixed realization of the boundary process. At the end of Section 5 we discuss the problem of measurability.

\section{ Heat equation with 
 nonsmooth boundary condition: some bounds}

We first study the regularity of the system   \eqref{eq:u}-\eqref{eq:u_boundary_condition},  where $\psi:[0,T]\to \R$ is a given function (with suitable assumptions); though the regularity result for this system is rather classical, we give a self-contained argument. Let us recall that equation \eqref{eq:u} admits an explicit solution given in term of the fundamental heat solution. %notare che $K_1(\csi)=\frac{1}{\sqrt{4\pi }}e^{-\chi^2/4 }, con \chi=x/\sqrt{t}$
 We recall that a solution of the heat equation for $(t,x) \in \mathbb R_+ \times \mathbb R_+$ with initial condition $u(0,x),$ and inhomogeneous Dirichlet boundary condition $u(t,0)$ may be represented as \begin{equation}\label{eq:heat_solution_dirichlet}
u(t,x)=\int_{\mathbb R_+} \overline{G}(t,x,y)u(0,y)dy+ \int_{\mathbb R_+}\left[\partial_y\bar{G}(t-\tau,x,y)u(\tau,0)\right]_{|y=0}d\tau,\end{equation}
where $\bar{G}$ is  defined by  \eqref{eq:dirichlet_heat_kernel}. 
The interest reader may refer to 
  \cite{1984_cannon} for a detailed study. From the expression \eqref{eq:heat_solution_dirichlet}  we obtain an explicit solution of the initial-boundary problem \eqref{eq:u}-\eqref{eq:u_boundary_condition} by taking into account initial and boundary conditions \eqref{eq:u_boundary_condition} and \eqref{eq:u_initial_condition} as it follows 
\begin{align}
	u(t,x) = -2 \int_0^t {\partial_x G}(t-\tau,x)\psi( \tau)d\tau. \label{eq:sol_heat}
\end{align}
From \eqref{eq:heat_kernel}, the space derivatives of the heat kernel $G$ are given by
\begin{eqnarray}
	{\partial_x G}(t,x) &=& t^{-1} G_1^\prime(t^{-1/2}x) = -\frac{1}{4\sqrt{\pi  }}t^{-3/2}x e^{-x^2/4t}, \label{eq:partial_1_heat_kernel}\\
	&\nonumber\\
	{\partial^2_x G}(t,x) &=& t^{-3/2}G_1^{\prime\prime}(t^{-1/2}x) = \frac{1}{4\sqrt{\pi  }}\left( \frac{1}{2}t^{-1}x^2- 1 \right) t^{-3/2} e^{-x^2/4t}.\label{eq:partial_2_heat_kernel} 
\end{eqnarray}   
Hence,   for $p\in [1,\infty)$, there exist some constant $c_p,c_p^\prime \in \mathbb R_+$ such that, for any $t\in [0,T]$
\begin{align}\label{eq:heat_bd}
	\begin{aligned}
		\|G(t,\cdot)\|_{L^p(\mathbb R_+)} &= c_p \, t^{-(1-1/p)/2},\\
		\|\partial_x G(t,\cdot)\|_{L^p(\mathbb R_+)} &= c_p^\prime\, t^{-(2-1/p)/2}.
	\end{aligned}
\end{align}
\begin{proposition}  
	For any $\psi$ bounded Borel function, the solution $u$ given by \eqref{eq:u}-\eqref{eq:u_boundary_condition} is such that  pathwise $u\in C^\infty [(0,T)\times (0,\infty)]$ and $u$ and its derivatives admit continuous extensions to $[0,T)\times (0,\infty)$.
	\label{prop:first_properties_u} 
\end{proposition}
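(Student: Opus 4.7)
The approach is to work directly with the explicit representation \eqref{eq:sol_heat},
\[
u(t,x) = -2\int_0^t \partial_x G(t-\tau,x)\,\psi(\tau)\,d\tau,
\]
and to justify differentiation under the integral sign at every interior point $(t_0,x_0)\in (0,T)\times(0,\infty)$. I would fix a compact neighborhood $K\subset(0,T)\times(0,\infty)$ of $(t_0,x_0)$ on which $x\geq x_0/2$ and observe, by direct computation from \eqref{eq:heat_kernel}, that every mixed derivative $\partial_t^m\partial_x^n[\partial_x G(t-\tau,x)]$ is, for $(t,x)\in K$ and $\tau\in(0,t)$, a finite linear combination of terms of the form
\[
(t-\tau)^{-\kappa}\,Q(x)\,e^{-x^2/4(t-\tau)},
\]
with $Q$ a polynomial and $\kappa$ depending on $m,n$. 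On $K$ the Gaussian factor is bounded by $e^{-x_0^2/16(t-\tau)}$, which kills any polynomial blow-up as $\tau\uparrow t$; hence each such derivative is dominated uniformly on $K$ by a function of $\tau$ that is integrable on $[0,T]$ (it even extends by $0$ continuously at $\tau=t$).

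By the dominated convergence theorem this legitimises differentiation under the integral sign arbitrarily many times and gives
\[
\partial_t^m\partial_x^n u(t,x) = -2\int_0^t \partial_t^m\partial_x^n[\partial_x G(t-\tau,x)]\,\psi(\tau)\,d\tau,
\]
which is continuous in $(t,x)\in K$ by another application of dominated convergence. Since $(t_0,x_0)$ was arbitrary, this yields $u\in C^\infty\bigl((0,T)\times(0,\infty)\bigr)$, together with the explicit integral formulas for all of its derivatives.

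For the continuous extension to $\{0\}\times(0,\infty)$, I would fix $x_0>0$ and show that all derivatives $\partial_t^m\partial_x^n u(t,x)$ tend to $0$ as $(t,x)\to(0,x_0)$, so we may extend them continuously by setting them equal to $0$ on $\{t=0\}$. Concretely,
\[
\bigl|\partial_t^m\partial_x^n u(t,x)\bigr| \leq 2\|\psi\|_\infty \int_0^t \bigl|\partial_t^m\partial_x^n\partial_x G(s,x)\bigr|\,ds,
\]
and the change of variable $r=x^2/(4s)$ reduces the right-hand side to an incomplete Gamma-type integral $\int_{x^2/(4t)}^\infty r^{\alpha}e^{-r}\,dr$ (up to polynomial prefactors in $x$), which tends to $0$ as $t\to 0^+$ uniformly for $x$ in a neighborhood of $x_0$. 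The only non-routine point in the whole argument is the uniform domination on compacts needed for dominated convergence; once one notices that the Gaussian factor beats every polynomial power of $(t-\tau)^{-1}$ when $x$ is bounded away from zero, the rest is bookkeeping, and I expect no genuine obstacle — the statement reproduces the classical smoothness of the Poisson integral for the heat equation on a half-space, reproduced here in a self-contained way.
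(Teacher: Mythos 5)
Your proposal is correct and follows essentially the same route as the paper, which simply observes that the claim follows from the smoothness of the kernel $G$ away from $(0,0)$ and the continuous extension of $G$ and its derivatives to $[0,T)\times(0,\infty)$; you merely supply the standard differentiation-under-the-integral bookkeeping (Gaussian decay dominating the polynomial singularity at $\tau=t$ for $x$ bounded away from $0$, which also kills the boundary term at the moving upper limit) that the paper leaves implicit.
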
\begin{proof} The statement follows from the fact that the  kernel $G$ in \eqref{eq:heat_kernel} is $C^\infty$ on $(0,T)\times (0,\infty)$ and $G$ and its derivatives admit continuous extensions to $[0,T)\times(0,\infty)$.
\end{proof}

The first step is to establish a bound for the norm of $u$ in a fractional Sobolev space, uniformly in time. Hence we recall here the definition of such spaces.

\begin{definition}[The fractional Sobolev spaces] %Triebel Sections 4.4.1 and 4.4.2
	We consider the fractional Sobolev space $ {W}^{\alpha,p}$ upon $\mathbb R$ for $1\le p\le \infty$  and $0<\alpha<1$ as follows \cite{1978_triebel,2012_dinezza_palatucci_valdinoci} 
	\begin{equation*}%\label{eq:def_sobolev_fractional}
	{W}^{\alpha,p}(\mathbb R) = \left\{ f\in L^p(\mathbb R):  \frac{| f(x)- f(y)| }{|x-y|^{1/p+\alpha}}\in L^p(\mathbb R)\right\},
	\end{equation*} an intermediate Banach space between $L^p(\mathbb R)$ and $W^{1,p}(\mathbb R)$, endowed with the natural
	norm
	\begin{align*}
	\|f\|_{W^{\alpha,p}(\mathbb R) }^p= \|f\|_{L^p}^p + [f]_{W^{\alpha,p}(\mathbb R) }^p,
	\end{align*}
	where
	\begin{align*}
	[f]_{W^{\alpha,p}(\mathbb R) }^p = \iint \frac{|f(x)-f(y)|^p}{|x-y|^{1+\alpha p}} dxdy
	\end{align*} 
	is the so-called Gagliardo (semi)norm of $f$.
	Furthermore, one can define the fractional Sobolev space with exponent $k+\alpha$, with $k \in \mathbb N$
	\begin{equation*}%\label{eq:def_sobolev_fractional_2}
	{W}^{k+\alpha,p}(\mathbb R) = \left\{ f\in W^{k,p}(\mathbb R):  \partial^k f\in W^{\alpha,p}(\mathbb R)\right\},
	\end{equation*} endowed with the norm
	\begin{equation}\label{eq:def_norm_sobolev_fractional}
	\|f\|_{W^{k+\alpha,p}(\mathbb R) }^p= \|f\|_{L^p}^p + \|\partial^k f\|_{L^p}^p+[\partial^k f]_{W^{\alpha,p}(\mathbb R) }^p.
	\end{equation}
	In the case $\alpha=0$ the previous space is the classical Sobolev space. The previous definitions are extended also to $\mathbb R_+$.
	
\end{definition} 

\begin{proposition}\label{prop:bound_u_W_alpha,p}
	Let $u$ be the solution of the system \eqref{eq:u}-\eqref{eq:u_boundary_condition}, with the boundary condition such that $\psi\in L^\infty([0,T])$. Then, for any $ 0<\alpha<1$ and $1< p<1/\alpha$, and  for a constant $C$ not depending upon $\psi$, we have  
	\begin{align}
		\sup_{t\in [0,T]}\|u(t,\cdot)\|_{W^{\alpha,p}(\mathbb R_+)}\le C\|\psi\|_{L^\infty([0,T])}.\label{eq:bound_u_1}
	\end{align} 
\end{proposition}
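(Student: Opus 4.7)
The plan is to work directly from the explicit representation~\eqref{eq:sol_heat}, which expresses $u(t,\cdot)$ as the time-convolution of the kernel $K_s:=-2\partial_xG(s,\cdot)$ against $\psi$, and to exploit the natural parabolic scaling of $K_s$. I split the norm as $\|u(t,\cdot)\|_{W^{\alpha,p}(\mathbb R_+)}^p=\|u(t,\cdot)\|_{L^p(\mathbb R_+)}^p+[u(t,\cdot)]_{W^{\alpha,p}(\mathbb R_+)}^p$ and apply Minkowski's integral inequality to each piece; for the Gagliardo seminorm this amounts to viewing $[\cdot]_{W^{\alpha,p}(\mathbb R_+)}$ as the $L^p(\mathbb R_+\times\mathbb R_+)$-norm of $(x,y)\mapsto(u(t,x)-u(t,y))/|x-y|^{1/p+\alpha}$ and commuting this norm with $\int_0^t d\tau$. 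This reduces the problem to bounding $\|K_s\|_{L^p(\mathbb R_+)}$ and $[K_s]_{W^{\alpha,p}(\mathbb R_+)}$ and integrating them in $s\in(0,t)$.

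The kernel satisfies the dilation identity $K_s(x)=s^{-1}K_1(s^{-1/2}x)$ with the fixed Schwartz profile $K_1(z)=\tfrac{z}{2\sqrt{\pi}}e^{-z^2/4}$. A change of variables $y=s^{-1/2}x$ (which preserves $\mathbb R_+$) then gives
\begin{align*}
\|K_s\|_{L^p(\mathbb R_+)}=c_p\,s^{-1+1/(2p)},\qquad [K_s]_{W^{\alpha,p}(\mathbb R_+)}=c_{\alpha,p}\,s^{-1+1/(2p)-\alpha/2},
\end{align*}
where the scale-free constants $c_p=\|K_1\|_{L^p(\mathbb R_+)}$ and $c_{\alpha,p}=[K_1]_{W^{\alpha,p}(\mathbb R_+)}$ are finite because $K_1$ is smooth with exponential decay, hence its restriction to $\mathbb R_+$ lies in every fractional Sobolev space $W^{\alpha,p}(\mathbb R_+)$ with $\alpha\in(0,1)$ and $p\in[1,\infty]$.

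Finally I integrate in $s$. The $L^p$ contribution $\int_0^t s^{-1+1/(2p)}\,ds=2p\,t^{1/(2p)}$ is always finite and bounded by a constant depending only on $T,p$. The Gagliardo contribution $\int_0^t s^{-1+1/(2p)-\alpha/2}\,ds$ converges at $s=0$ precisely when $1/(2p)-\alpha/2>0$, i.e.\ $\alpha p<1$; this is exactly the standing hypothesis $p<1/\alpha$, and then the integral is bounded on $[0,T]$. Combining the two bounds and multiplying by $\|\psi\|_{L^\infty([0,T])}$ yields \eqref{eq:bound_u_1}. The only step outside of routine bookkeeping is the verification that $c_{\alpha,p}<\infty$ (immediate once $K_1$ is recognised as Schwartz); the threshold $p<1/\alpha$ then appears naturally as the time-integrability condition on the $W^{\alpha,p}$-singularity of the parabolic kernel near $s=0$.
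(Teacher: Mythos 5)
Your proof is correct, and it takes a genuinely different route from the paper's. You place the fractional Sobolev norm on the kernel $K_s=-2\partial_xG(s,\cdot)$, use Minkowski's integral inequality to commute the $L^p(\mathbb R_+)$ and Gagliardo norms with the time integral, and then exploit the exact parabolic scaling $K_s(x)=s^{-1}K_1(s^{-1/2}x)$ (which is compatible with the half-line and with the Gagliardo seminorm) to reduce everything to the finiteness of $\|K_1\|_{W^{\alpha,p}(\mathbb R_+)}$ and a time integral; the threshold $\alpha p<1$ emerges as the integrability condition for $s^{-1+1/(2p)-\alpha/2}$ at $s=0$. The paper instead derives pointwise bounds $|u(t,x)|\le C(1_{x\le 1}+e^{-cx}1_{x>1})\|\psi\|_{L^\infty}$ and $|\partial_xu(t,x)|\le C(x^{-1}1_{x\le1}+e^{-cx}1_{x>1})\|\psi\|_{L^\infty}$ via the self-similar variable $\xi=x/\sqrt{t-\tau}$, and then estimates the Gagliardo seminorm of $u(t,\cdot)$ directly, splitting near and far from the diagonal and using the mean-value theorem; there the same condition $\alpha p<1$ appears as spatial integrability of $x^{-\alpha p}$ near the boundary $x=0$ — the two conditions are the same singularity seen in time versus in space. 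Your argument is shorter and more systematic, and the scaling computation makes the role of the exponent restriction transparent; the paper's more laborious route has the advantage of producing as by-products the pointwise estimates \eqref{eq:bound_u_ptwise}, \eqref{eq:Lp_norm_u} and \eqref{eq:bound_derivata_u}, which are reused later (e.g.\ the positivity and $L^1_x$ bound of $u$ in Proposition \ref{prop:max_principle} and in the energy estimate of Proposition \ref{prop:main_bound_v}), so if one adopted your proof those auxiliary bounds would still have to be established separately.
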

\begin{proof}
	By Proposition \ref{prop:first_properties_u} we already have some continuous properties   far from the boundary. Now we study the behaviour of the solution  near the boundary $x=0$. 
	
	From \eqref{eq:partial_1_heat_kernel}, by considering the self similarity rescaling   $\xi = x/\sqrt{t-\tau}$, we analyse carefully the behaviour of $u$ both close and far from the origin. For some $c>0$ and a constant $C$, that may be different from line to line, one may obtain from \eqref{eq:sol_heat} the following bound for $u$, for any $(t,x)\in [0,T]\times \mathbb R_+$
	\begin{equation}
    \begin{split}
		|u(t,x)| &\le 2\|\psi\|_{L^\infty([0,T])} \int_0^t \left|{\partial_x G}(t-\tau,x)\right| d\tau  \\
		&= C\|\psi\|_{L^\infty([0,T])}  \int_{x/\sqrt{t}}^\infty \frac{\xi^2}{x^2} G_1^\prime(\xi) \frac{x^2}{\xi^3} d\xi  \\
		&\le C\left(1_{x\le 1}+e^{-cx}1_{x>1}\right) \|\psi\|_{L^\infty([0,T])}  \int_0^\infty \xi^{-1}G_1^\prime(\xi) d\xi  \\
		&= C\left(1_{x\le 1}+e^{-cx}1_{x>1}\right) \|\psi\|_{L^\infty([0,T])}  \int_0^\infty e^{-\xi^2} d\xi  \\&
		=  C\left(1_{x\le 1}+e^{-cx}1_{x>1}\right) \|\psi\|_{L^\infty([0,T])}.
        \label{eq:bound_u_ptwise}
	\end{split}
    \end{equation}
	Hence,  for any $ t \in [0,T]$, we have
	\begin{equation}
		\left\|u(t,\cdot)\right\|_{L^\infty(\mathbb R_+)},\left\|u(t,\cdot)\right\|_{L^p(\mathbb R_+)}\le C \, \|\psi\|_{L^\infty([0,T])}, \quad \mbox{for any }  1\le p<\infty.
		\label{eq:Lp_norm_u}
	\end{equation}  In a similar way, taking the spatial derivative of \eqref{eq:sol_heat}, we get the following bound for the spatial derivative of $u$, for any $(t,x)\in [0,T]\times \mathbb R_+$ 
	\begin{equation}\label{eq:bound_derivata_u}
		\begin{split}
			|\partial_x u(t,x)| &\le 2\|\psi\|_{L^\infty([0,T])}  \int_0^t \left|{\partial^2_x G}(t-\tau,x)\right| d\tau\\
			&= C\|\psi\|_{L^\infty([0,T])}  \int_{x/\sqrt{t}}^\infty \frac{\xi^3}{x^3} G_1^{\prime\prime}(\xi) \frac{x^2}{\xi^3} d\xi\\
			&\le C\|\psi\|_{L^\infty}([0,T]) \left(\frac{1}{x}1_{x\le 1}+e^{-cx}1_{x>1}\right) \int_0^\infty G_1^{\prime\prime}(\xi) d\xi\\
			&= C\left(\frac{1}{x}1_{x\le 1}+e^{-cx}1_{x>1}\right) \|\psi\|_{L^\infty([0,T])}.
		\end{split}
	\end{equation}
	
	We can estimate the $W^{\alpha,p}(\mathbb R_+)$ seminorm of $u(t,\cdot)$,  for any $ t \in [0,T]$; indeed, we may write
	
	\begin{align*}
		[u(t,\cdot)]_{W^{\alpha,p}(\mathbb R_+)}^p &= \iint_{|x-y|<1} \frac{|u(t,x)-u(t,y)|^p}{|x-y|^{1+\alpha p}} dxdy +\iint_{|x-y|>1} \frac{|u(t,x)-u(t,y)|^p}{|x-y|^{1+\alpha p}} dxdy\\
		&\le 2\iint_{x<y<x+1} \frac{|u(t,x)-u(t,y)|^p}{|x-y|^{1+\alpha p}} dxdy +2^p\iint_{|x-y|>1} \frac{|u(t,x)|^p}{|x-y|^{1+\alpha p}} dxdy\\
		&\le 2\iint_{x<y<x+1} \frac{|u(t,x)-u(t,y)|^p}{|x-y|^{1+\alpha p}} dxdy +C\|u\|_{L^p(\mathbb R_+)}^p,
	\end{align*}
	where the last line is due to  the  equation \eqref{eq:Lp_norm_u}   and the fact that the function $x^{-1-\alpha p}$ is integrable at infinity.  By using the inequality $e^{-cz}\le Cz^{-1}$ and by \eqref{eq:Lp_norm_u}  and \eqref{eq:bound_derivata_u}, we have that, for any $ t \in [0,T]$,
	\begin{align*}
		[u(t,\cdot)]_{W^{\alpha,p}(\mathbb R_+)}^p &\le C\|\psi\|_{L^\infty([0,T])}^p+2\iint_{x<y<x+1} |x-y|^{-1-\alpha p}\left(\int_{x}^{y} |\partial_z u(t,z)| dz\right)^p dxdy\\
		&\le C\|\psi\|_{L^\infty([0,T])}^p\\&\,\,+2\iint_{x<y<x+1} |x-y|^{-1-\alpha p}\left(\int_{x}^{y} C(z^{-1}1_{z\le 1}+e^{-cz}1_{z>1}) \|\psi\|_{L^\infty([0,T])} dz\right)^p dxdy\\
		&\le C\|\psi\|_{L^\infty([0,T])}^p\left( 1+  \iint_{x<y<x+1} |x-y|^{-1-\alpha p} \left(|\log(y/x)|^p1_{x\le 1} +|x-y|^p e^{-cpx}1_{x>1}\right) dxdy\right)\\
		&\le C\|\psi\|_{L^\infty([0,T])}^p\left( 1+ \int_0^1 dx \int_x^{\infty} |x|^{-1-\alpha p}|1-y/x|^{-1-\alpha p}|\log(y/x)|^p dy\right)\\
		& \,\, +C\|\psi\|_{L^\infty([0,T])}^p \int_1^\infty dx \int_x^{x+1} |x-y|^{-1+(1-\alpha)p} e^{-cpx} dy .
	\end{align*}
	By  the changes of variable $w=y/x$ and $v=y-x$, in the first and second integral, respectively, we obtain, for any $ t \in [0,T]$,
	\begin{align*}
		[u(t,\cdot)]_{W^{\alpha,p}(\mathbb R_+)}^p &\le C\|\psi\|_{L^\infty([0,T])}^p+ C\|\psi\|_{L^\infty([0,T])}^p \int_0^1 dx \,|x|^{-\alpha p}\int_1^\infty |1-w|^{-1-\alpha p}|\log w|^p du   \\
		&\,\,+ C\|\psi\|_{L^\infty([0,T])}^p \int_1^\infty dx \int_0^1 v^{-1+(1-\alpha)p}e^{-cpx}  dv.
		%&\le C\|\psi\|_{L^\infty([0,T])} \int_0^1 |x|^{-\alpha p} dx \cdot \int_0^\infty |1-u|^{-1-\alpha p}|\log u|^p du +C.
	\end{align*}
 
	The second integral in the above expression is clearly finite. Concerning the first integral, note that, for $w$ large, $|1-uw|^{-1-\alpha p}|\log w|^p$ is integrable, while for $w$ close to $1$,
	\begin{align*}
		|1-w|^{-1-\alpha p}|\log w|^p \approx |1-w|^{-\alpha p}.
	\end{align*}
	Hence, the first integral is finite if and only if $\alpha p<1$. Hence, if $0<\alpha<1$ and $1< p<1/\alpha$, we reach the conclusion.
\end{proof}

\medskip

Given more regularity on the boundary condition, we get a bound on the norm of $u$ in a $2+\alpha$ fractional Sobolev space, uniformly in time, with $\alpha \in (0,1)$.

\begin{proposition}\label{prop:bound_u_W_alpha,p_Lipschitz}
	Let $u$ be the solution of the system \eqref{eq:u}-\eqref{eq:u_boundary_condition}, with the boundary condition such that $\psi$ is Lipschitz and $\psi(0)=0$. Then, for any $ 0<\alpha<1$ and $1< p<1/\alpha$, and  for a constant $C$ not depending upon $\psi$, we have   
	\begin{align}
		\sup_{t\in [0,T]}\|u(t,\cdot)\|_{W^{2+\alpha,p}(\mathbb R_+)} \le C\|\psi\|_{W^{1,\infty}([0,T])}.\label{eq:bound_u_2}
	\end{align} 
\end{proposition}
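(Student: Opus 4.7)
The plan is to shift the second spatial derivative on the heat kernel into a time derivative via the heat equation $\partial_x^2 G=\partial_s G$, and then integrate by parts in time, thereby transferring one derivative from the kernel onto $\psi$. In this way $\partial_x^2 u$ inherits the same Duhamel representation \eqref{eq:sol_heat} as $u$ but with $\dot{\psi}$ in place of $\psi$. After that, the bound \eqref{eq:bound_u_2} reduces to applying Proposition \ref{prop:bound_u_W_alpha,p} with boundary datum $\dot\psi$, plus \eqref{eq:bound_u_1} for the lower-order part of the norm in \eqref{eq:def_norm_sobolev_fractional}.

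More precisely, differentiating \eqref{eq:sol_heat} twice in $x$ for $x>0$ and using $\partial_x^2 G(s,x)=\partial_s G(s,x)$,
\begin{align*}
  \partial_x^2 u(t,x) = -2\int_0^t \partial_x^3 G(t-\tau,x)\,\psi(\tau)\,d\tau
  = 2\int_0^t \partial_\tau\bigl[\partial_x G(t-\tau,x)\bigr]\,\psi(\tau)\,d\tau.
\end{align*}
Integration by parts in $\tau$ produces
\begin{align*}
  \partial_x^2 u(t,x) = 2\bigl[\partial_x G(t-\tau,x)\,\psi(\tau)\bigr]_{\tau=0}^{\tau=t}
  -2\int_0^t \partial_x G(t-\tau,x)\,\dot{\psi}(\tau)\,d\tau.
\end{align*}
Both boundary terms vanish: the one at $\tau=0$ because $\psi(0)=0$, and the one at $\tau=t^-$ because, by \eqref{eq:partial_1_heat_kernel}, $\partial_x G(s,x)\to 0$ as $s\to 0^+$ for each fixed $x>0$. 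Hence
\begin{align*}
  \partial_x^2 u(t,x) = -2\int_0^t \partial_x G(t-\tau,x)\,\dot{\psi}(\tau)\,d\tau,
\end{align*}
which is precisely the formula \eqref{eq:sol_heat} applied to the boundary datum $\dot\psi$.

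Since $\psi$ is Lipschitz, $\dot\psi\in L^\infty([0,T])$ with $\|\dot\psi\|_{L^\infty}\le\|\psi\|_{W^{1,\infty}}$. Proposition \ref{prop:bound_u_W_alpha,p}, applied to $\partial_x^2 u$ in place of $u$ and with boundary datum $\dot\psi$, then gives, for every $0<\alpha<1$ and $1<p<1/\alpha$,
\begin{align*}
  \sup_{t\in[0,T]}\|\partial_x^2 u(t,\cdot)\|_{W^{\alpha,p}(\mathbb R_+)}
  \le C\,\|\dot{\psi}\|_{L^\infty([0,T])}
  \le C\,\|\psi\|_{W^{1,\infty}([0,T])}.
\end{align*}
Together with the bound \eqref{eq:bound_u_1} on $u$ itself (which controls $\|u(t,\cdot)\|_{L^p(\mathbb R_+)}$), and recalling the definition \eqref{eq:def_norm_sobolev_fractional} of the $W^{2+\alpha,p}$-norm, this yields \eqref{eq:bound_u_2}.

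The main technical point I expect to be the obstacle is the rigorous justification of the differentiation under the integral sign and of the integration by parts in $\tau$, due to the singularity of $G(s,\cdot)$ as $s\to 0^+$. This is handled by a standard truncation argument: one performs the computation on the interval $[0,t-\varepsilon]$, where the kernel is smooth, and then lets $\varepsilon\to 0^+$, using the exponential decay $\partial_x G(s,x)\to 0$ (for $x>0$ fixed) as $s\to 0$ together with the pointwise estimates \eqref{eq:bound_u_ptwise}--\eqref{eq:bound_derivata_u} as dominating bounds. The hypothesis $\psi(0)=0$ is indispensable to kill the boundary contribution at $\tau=0$; without it one would inherit the constant boundary datum $\psi(0)$, whose associated heat flow is not in $W^{2+\alpha,p}(\mathbb R_+)$ because of a non-integrable singularity of $\partial_x^2 u$ at the origin.
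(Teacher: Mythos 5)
Your proposal is correct and follows essentially the same route as the paper: both exploit $\partial_x^2 G=\partial_t G$ together with an integration by parts in time (using $\psi(0)=0$ and the vanishing of the kernel at $t=0^+$ for $x>0$) to identify $\partial_x^2 u$ with the heat flow generated by the boundary datum $\dot\psi$, and then invoke Proposition \ref{prop:bound_u_W_alpha,p} for $\dot\psi$ plus \eqref{eq:bound_u_1} to assemble the $W^{2+\alpha,p}$ norm. The only cosmetic difference is that the paper integrates by parts at the level of $\partial_x u$ and then takes one more $x$-derivative, whereas you integrate by parts directly in the representation of $\partial_x^2 u$; the two computations are equivalent.
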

\begin{proof}
	Note that $\partial_x^2G =\partial_t G$,due to \eqref{eq:heat_kernel} and \eqref{eq:partial_2_heat_kernel}; furthermore from \eqref{eq:u_initial_condition} it follows that  $G(0,x)=0$ for $x>0$. By an integration by parts in time, we get, for any $(t,x)\in [0,T]\times \mathbb R_+$ 
	\begin{align*}
		\partial_x u(t,x) &= -2 \int_0^t {\partial^2_x G}(\tau,x)\psi(t-\tau)d\tau\\
		&= -2 \int_0^t {\partial_\tau G}(\tau,x)\psi(t-\tau)d\tau\\
		&= -2G(\tau,x)\psi(t-\tau)\mid_{\tau=0}^t -2\int_0^t G(\tau,x)\dot{\psi}(t-\tau) d\tau\\
		&= -2\int_0^t G(\tau,x)\dot{\psi}(t-\tau) d\tau.
	\end{align*}
	Differentiating \eqref{eq:sol_heat} with respect to $x$, we get
	\begin{equation}\label{eq:heat_kernel_partial_x_u}
		\partial_x^2 u(t,x) = -2\int_0^t \partial_x G(\tau,x)\dot{\psi}(t-\tau) d\tau.
	\end{equation}
	Hence, by comparing  \eqref{eq:heat_kernel} and \eqref{eq:heat_kernel_partial_x_u}, one can deduce that for any   $(t,x)\in [0,T]\times \mathbb R_+$, $\partial_x^2 u(t,x)$ is the solution to the heat equation \eqref{eq:u}, with boundary condition at $x=0$ given by $\dot{\psi}$. Hence, applying the bound \eqref{eq:bound_u_1} to $\partial_x^2 u$, with $\dot{\psi}$ replacing $\psi$, we get
	\begin{align}
		\sup_{t\in [0,T]}\|\partial_x^2  u(t,\cdot)\|_{W^{\alpha,p}(\mathbb R_+)}\le C\|\dot{\psi}\|_{L^\infty([0,T])}.\label{eq:bound_partial_x^2_1}
	\end{align}
	By \eqref{eq:bound_u_1} and  \eqref{eq:bound_partial_x^2_1}  we finally obtain the bound of the norm \eqref{eq:def_norm_sobolev_fractional}  with $k=2$, that is the statement \eqref{eq:bound_u_2}.
	
\end{proof}

The next step is to give the main result of the section, i.e a continuous result for the solution $u$ of the system \eqref{eq:u}-\eqref{eq:u_boundary_condition}   in $W^{1,p}(\mathbb R_+)$, uniformly in time, with respect to the boundary condition. In order to obtain the result we consider compatible Banach spaces in Propositions \ref{prop:bound_u_W_alpha,p} and  \ref{prop:bound_u_W_alpha,p_Lipschitz}. Let us just recall the definition of interpolation pair of Banach spaces. The interest reader may refer to \cite{1988_bennet_sharpley,1964_Lions}. Here, for the interpolation methods and results, we mainly refer to    \cite{2018_Lunardi,1978_triebel}.

\begin{definition}%\label{def:interpolation spaces} If $(X_0,X_1)$ is a compatible couple of Banach spaces, we have that 
	$$
	X_0\cap X_1 \subset X_0,X_1 \subset X_0+X_1,
	$$
	where both $
	X_0\cap X_1 ,  X_0+X_1$ are Banach space with the norms
	\begin{eqnarray*}
		\|x\|_{X_0 \cap X_1} &:=& \max \left ( \left \|x \right \|_{X_0}, \left \|x \right \|_{X_1} \right ),\\
		\|x\|_{X_0 + X_1} &:=& \inf \left \{ \left \|x_0 \right \|_{X_0} + \left \|x_1 \right \|_{X_1} \ : \  x = x_0 + x_1, \; x_0 \in X_0, \; x_1 \in X_1 \right \}.
	\end{eqnarray*}
	An \emph{interpolation space} is a space $X$ that is an intermediate space between $X_0$ and $X_1$ in the sense that
	$$
	X_0 \cap X_1 \subset X \subset X_0 + X_1,
	$$
	where the two inclusions maps are continuous. Given two compatible couples $(X_0, X_1)$ and $(Y_0, Y_1)$, an \emph{interpolation pair} is a couple $(X, Y)$ of Banach spaces with the two following properties:
	\begin{itemize}
		\item[i)] The space $X$ is intermediate between $X_0$ and $X_1$, and $Y$ is intermediate between $Y_0$ and $Y_1$.
		\item[ii)]If $L$ is any linear operator from $X_0 + X_1$ to $Y_0 + Y_1$, which maps continuously $X_0$ to $Y_0$ and $X_1$ to $Y_1$, then it also maps continuously $X$ to $Y$.
	\end{itemize}
\end{definition}

\begin{definition}
	Let $(X_0, X_1)$ a compatible couple of Banach spaces , if $0<\theta<1$ and $1\le q\le \infty$	 with  $ (X_0, X_1)_{\theta,q} $ one denotes the Banach space obtained by the real interpolation $K$ method with parameters $\theta$  and $q$. To be more precise,
	for $ t > 0$ and any $x \in X_0 + X_1$, let
	$$ 
	K(x, t; X_0, X_1) = \inf \left \{ \left \|x_0 \right \|_{X_0} + t \left \|x_1 \right \|_{X_1} \ :\  x = x_0 + x_1, \; x_0 \in X_0, \, x_1 \in X_1 \right \}.
	$$
	
	Let
	\begin{align*}
		\|x\|_{\theta,q} &= \left( \int_0^\infty \left( t^{-\theta} K(x, t; X_0, X_1) \right)^q \, \tfrac{dt}{t} \right)^{\frac{1}{q}}=\|K(x, t; X_0, X_1)\|_{L_*^q(\mathbb R_+)}, && 0 < \theta < 1, \, 1 \leq q < \infty, \\
		\|x\|_{\theta,\infty} &= \sup_{t > 0} \; t^{-\theta} K(x, t; X_0, X_1), &&  0 \le \theta \le 1,
	\end{align*}
	where $L_*^q(\mathbb R)$ denotes the $L^q$ space with respect to the measure $dt/t$.
	Then 	$ (X_0, X_1)_{\theta,q} $  is  the following Banach space
	$$
	(X_0, X_1)_{\theta,q} =\{ x\in X_0+X_1: 	\|x\|_{\theta,q} < \infty \}.
	$$
\end{definition}

\medskip

\begin{proposition}\label{prop:heat_bd}
	Assume that $0<\beta<1/2$ and take 
	\begin{equation}\label{eq:parameter_bound_p_beta}
		1\le q<\frac{1}{1-2\beta}.
	\end{equation} Let $u$ be the solution of the system \eqref{eq:u}-\eqref{eq:u_boundary_condition}. Then, for every $\psi \in C^\beta[0,T]$ with $\psi(0)=0$, and  for a constant $C$ not depending upon $\psi$, we have  
	\begin{align}
		\|u\|_{L^\infty([0,T],W^{1,q}(\mathbb R_+))} =	\sup_{t\in [0,T]}\|u(t,\cdot)\|_{W^{1,q}(\mathbb R_+)} \le C\|\psi\|_{C^\beta([0,T])}.\label{eq:u_bound}
	\end{align} 
\end{proposition}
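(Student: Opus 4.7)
The approach is to apply real interpolation to the linear operator $L_t: \psi \mapsto u(t,\cdot)$, combining Propositions \ref{prop:bound_u_W_alpha,p} and \ref{prop:bound_u_W_alpha,p_Lipschitz}. Viewing $L_t$ as bounded from $L^\infty([0,T])$ into $W^{\alpha,p}(\mathbb R_+)$ and from $\{\psi\in W^{1,\infty}([0,T]):\psi(0)=0\}$ into $W^{2+\alpha,p}(\mathbb R_+)$, with operator norms uniform in $t$, is the crucial input. The classical identification $(L^\infty([0,T]), W^{1,\infty}([0,T]))_{\beta,\infty}=C^\beta([0,T])$ (with equivalent norms) ensures that the intermediate space on the $\psi$-side at level $\theta=\beta$ with third parameter $\infty$ is precisely the H\"older class containing our boundary datum.

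First I would choose parameters $\alpha\in(0,1)$ and $p\in(1,q]$ so that (a) the two propositions apply, i.e.\ $\alpha p<1$; (b) after interpolation the target embeds into $W^{1,q}(\mathbb R_+)$. The natural choice is $\alpha=1-2\beta+\eta$ for a small $\eta>0$, so that $\alpha+2\beta=1+\eta$ sits slightly above $1$. The constraint $\alpha p<1$ becomes $\eta<1/p-(1-2\beta)$, which is positive exactly because $q<1/(1-2\beta)$ allows $p\in(1,1/(1-2\beta))$; at the same time $\eta$ can be arranged to exceed $1/p-1/q$ to enable the embedding below.

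Real interpolation applied pointwise in $t$ then yields, uniformly in $t\in[0,T]$,
\[
\|u(t,\cdot)\|_{(W^{\alpha,p}(\mathbb R_+),\,W^{2+\alpha,p}(\mathbb R_+))_{\beta,\infty}} \le C\,\|\psi\|_{C^\beta([0,T])}.
\]
The real interpolation space on the right is a Besov-type space of smoothness order $\alpha+2\beta=1+\eta$ and integrability $p$, which by a standard Besov--Sobolev embedding (justified by $\eta>1/p-1/q$) continuously embeds into $W^{1,q}(\mathbb R_+)$. Chaining these inclusions and taking the supremum over $t$ yields \eqref{eq:u_bound}.

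The principal obstacle lies in matching the outcome of real interpolation with the integer-order Sobolev space $W^{1,q}$: real interpolation between Slobodeckij spaces with fixed $p$ produces a Besov space with third parameter $\infty$, whose smoothness index we would like to be exactly $1$. This forces the introduction of the strictly positive buffer $\eta$ and the use of an embedding; the hypothesis $q<1/(1-2\beta)$ is precisely what permits $\eta>0$ compatibly with $\alpha p<1$. A minor point is that Proposition \ref{prop:bound_u_W_alpha,p_Lipschitz} requires $\psi(0)=0$, but this assumption is inherited by the interpolation subspace and is also part of the hypothesis of the present statement, so the interpolation argument is consistent.
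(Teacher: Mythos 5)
Your proposal is correct and follows essentially the same route as the paper: real interpolation between the $L^\infty\to W^{\alpha,p}$ and Lipschitz$\to W^{2+\alpha,p}$ endpoint bounds, identification of the interpolated source space with $C^\beta$, and a fractional Sobolev/Besov embedding into $W^{1,q}$, with the hypothesis $q<1/(1-2\beta)$ entering exactly where you place it. The only (cosmetic) difference is where the slack is inserted: you keep $\theta=\beta$ with third index $\infty$ and put a buffer $\eta$ into $\alpha$ so the target Besov space has smoothness $1+\eta$, whereas the paper lowers the interpolation parameter to $\beta'=\beta-\epsilon$ with third index $p$ so as to land directly in the Slobodeckij space $W^{2\beta'+\alpha,p}$ before embedding.
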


\begin{proof} 
	Given the two compatible couples $\left(C([0,T]),C^1([0,T])\right) $  and  $\left( W^{\alpha,p}(\mathbb R_+),W^{2+\alpha,p}(\mathbb R_+)\right)$ we seek for an interpolation couple where the first component is the space to which the initial condition belongs and the second gives the regularity of the solution of the system \eqref{eq:u}-\eqref{eq:u_boundary_condition}.
	
	\medskip
	
	Let us denote by  $A$ the solution operator given, where defined, by \eqref{eq:sol_heat}, that is
	\[
	A\psi=u.
	\]     
	
	By \eqref{eq:bound_u_1} and \eqref{eq:bound_u_2},  $A$  is a bounded linear operator from   $C([0,T]) +C^1([0,T]) $ to $W^{\alpha,p}(\mathbb R_+)+W^{2+\alpha,p}(\mathbb R_+)$, which maps continuously     $C([0,T])  $ to $W^{\alpha,p}(\mathbb R_+) $ and  $C^1([0,T])$ to $W^{2+\alpha,p}(\mathbb R_+)$.  To be precise, $A$ is only defined on subspaces of $C([0,T])$ and $C^1([0,T])$ of functions $\psi$ such that $\psi(0)=0$. However, we can easily extend $A$ to the full spaces by composing it with the operator $\psi\mapsto \psi-\psi(0)$. 
	
	\medskip
	
	By  Theorem 1.1.6 and Proposition 1.1.4 in \cite{2018_Lunardi}, for any $0<\beta'<1$ and  $\epsilon>0$ small, $A$ is a bounded operator such that
	\begin{equation}
		(C([0,T]),C^1([0,T]))_{\beta'+\epsilon,\infty} \hookrightarrow (C([0,T]),C^1([0,T]))_{\beta',p}\stackrel{A}{\longrightarrow} (W^{\alpha,p}(\mathbb R_+),W^{2+\alpha,p}(\mathbb R_+))_{\beta',p}.
		\label{eq:interpolation_operator}
	\end{equation}
	%By \cite{1978_triebel} (Section 4.4.1.) and
	By \cite{1978_triebel} (Section 4.4.1) and \cite{2018_Lunardi} (Examples 1.1.8 and 1.3.7), for any  $0<\beta'<1$ and  
	% Triebel, Interpolation theory ..., Section 4.3.1, Theorems 1 and 2 and Remark 2; Section 4.5.2, Theorem 1; see also Section 4.4.1, Remark 2, and 4.4.2, Remark 2]
  $\epsilon >0 $ small the interpolation space at the left side of the map \eqref{eq:interpolation_operator} is
		\begin{equation*}
		(C([0,T]),C^1([0,T]))_{\beta'+\epsilon,\infty}= C^{\beta'+\epsilon}([0,T]). %\label{eq:interpolation_C}. 
	\end{equation*}
	By \cite{1978_triebel}(Sections, 4.3.1. and  4.5.2.) and \cite{2018_Lunardi} (Examples 1.3.8 and 1.3.10.), whenever  $2\beta'+\alpha$ is not an integer,  the  interpolation space at the right hand side of the map \eqref{eq:interpolation_operator} is
	\begin{equation}
		(W^{\alpha,p}(\mathbb R_+), W^{2+\alpha,p}(\mathbb R_+))_{\beta',p}=W^{(1-\beta')\alpha+\beta'(2+\alpha),p}(\mathbb R_+)=W^{2\beta'+\alpha,p}(\mathbb R_+).
		\label{eq:interpolation_W} 
	\end{equation}
	Hence, by \eqref{eq:interpolation_operator}-\eqref{eq:interpolation_W} we obtain, for $0<\alpha<1$, $1<p<1/\alpha$, $0<\beta'<1$ with $2\beta'+\alpha\notin \mathbb{N}$,
	\begin{align}\label{eq:u_psi_2beta_alpha_beta_epsilon}
		\sup_{t\in [0,T]}\|u(t,\cdot)\|_{W^{2\beta+\alpha,p}(\mathbb R_+)} \le C\|\psi\|_{C^{\beta+\epsilon}([0,T])}.
	\end{align}
	 For $0<\beta'<1/2$, by the Sobolev embeddings  \cite{2018_Lunardi,1978_triebel}, we may apply   Proposition \ref{prop:Sobolev_embedding}. %[Triebel, Section 4.6.1, Remark 1; see also Section 4.2.4])   
	We consider  $s=2\beta'+\alpha \in (1,2)$  and $s>1$, $s^\prime =1$, and $q$ such that 
	\begin{align}
		\frac{1}{q} = \frac{1}{p}+1-(2\beta'+\alpha) > 1-2\beta',\label{eq:exponent_condition}
	\end{align}
	so that $1\le q<1/(1-2\beta')$.  Hence, condition \eqref{eq:prop_embedding_parameters} is satisfied and from \eqref{eq:prop_embedding}, we get
	\begin{equation}\label{eq:u_psi_2beta_alpha_in_q}
		W^{2\beta'+\alpha,p}(\mathbb R_+)\subseteq W^{1,q}(\mathbb R_+).
	\end{equation}
	In conclusion, from \eqref{eq:u_psi_2beta_alpha_beta_epsilon} and \eqref{eq:u_psi_2beta_alpha_in_q} we can state that for given $0<\beta'<1/2$, and  $1\le q<1/(1-2\beta')$, one can always find $0<\alpha<1$ and $1<p<1/\alpha$ satisfying \eqref{eq:exponent_condition}, such that  
	\begin{align*}
		\sup_{t\in [0,T]}\|u(t,\cdot)\|_{W^{1,q}(\mathbb R_+)} \le C\|\psi\|_{C^{\beta'+\epsilon}(0,T)}.
	\end{align*}
	 Taking $\beta'=\beta-\epsilon$ for $0<\epsilon<\beta$ such that $2\beta'+\alpha\in (1,2)$ and $q<1/(1-2\beta')$, we get \eqref{eq:u_bound}.
\end{proof}

\section{A linear parabolic equation: a priori estimates }\label{sec:uncoupled_equation}

In this section we  study  an auxiliary problem. Precisely, we prove the well posedness of the  following linear PDE 
\begin{align}
	\begin{aligned}\label{eq:s_lin}
		&\partial_t \tilde{s} = \partial_x^2 \tilde{s} +b(t,x) \partial_x \tilde{s} +\widetilde{\gamma}(t,x) \tilde{s},\\
		&\tilde{s}(0,x)=s_0,\\
		&\tilde{s}(t,0)=\psi(t),
	\end{aligned}
\end{align}
 with coefficients  independent on $\tilde{s}$ and satisfying the following assumptions.
  
\begin{assumption}\label{hp:linearPDE}
Let us assume the following hypothesis upon the functions in \eqref{eq:v_lin} and the boundary condition $\psi$.
	\begin{itemize}
		\item[A1.] The boundary condition $\psi\in C^\beta([0,T])$,  for some $\beta\in \left(  1/4,1/2\right)$ and it is such that $\psi(0)=0$.
		\item[A2.] The functions $b$ and $\widetilde\gamma$ satisfy the following regularity conditions
		\begin{align*}
	b\in L^\infty([0,T], L^2(\mathbb R_+)), \quad \widetilde{\gamma}\in L^\infty([0,T] \times \mathbb R_+).	%	b\in L^\infty_t(L^2_x),\quad \gamma \in L^\infty_{t,x}.
		\end{align*}
		\item[A3.] The initial condition $v_0 \in W^{1,p}(\mathbb R_+)$ for some $2\le p\le \infty$ and it is such that $v_0(0)=0$.
	\end{itemize}
\end{assumption}

The result of this Section are used in the next Section to deal with non-smooth (non $C^{1,2}$) solutions to a linearized version of the system \eqref{eq:det_nostro_s}-\eqref{eq:det_nostro_c}.

\medskip

The solution $\tilde{s}:[0,T]\times[0,\infty)\to \R$  of system \eqref{eq:s_lin}  is meant as a mild solution \cite{1984_cannon,2015_liu_Rochner} according with the following definition. %canon equation 20.3.2
  \begin{definition}[Mild solution for $\tilde{s}$] %\label{def:mild_solution_s_linear}
  A function $\tilde{s}:[0,T]\times[0,\infty)\to \R$ is a $L^\infty ([0,T],W^{1,p}(\mathbb R_+))$ \emph{mild solution} of the equation \eqref{eq:s_lin} if   $\tilde{s} \in L^\infty ([0,T],W^{1,p}(\mathbb R_+))$ and it is such that, for every $t\in [0,T]$, 
  	\begin{align}
  		\tilde{s}(t,\cdot) = -2\int^t_0\partial_x  {G}(t-\tau,\cdot) \psi(\tau)d\tau +  \overline{G}(t,\cdot)*_Ds_0 +\int_0^t  \overline{G}(t,\cdot)*_D\left(b(\tau,\cdot)\partial_x \tilde{s}(\tau,\cdot)+\widetilde{\gamma}(\tau,\cdot) \tilde{s}(\tau,\cdot)\right) d\tau,\label{eq:PDE_s_lin_mild}
  	\end{align}
  	where $G$ is the heat kernel \eqref{eq:heat_kernel}  and $ \overline{G}(t,\cdot)*_Df (x)$  denotes the convolution  \eqref{eq:K_conv}.	
  \end{definition}
  
  \medskip 
\begin{remark}  Let us comment that  although $ \overline{G}*_Df$  in \eqref{eq:K_conv} is not the convolution with $f$ in the strict sense,  the Young inequality still holds for $G*_Df$, i.e. for $ 1\leq p^\prime, q^\prime, r^\prime\leq \infty$ with
	\begin{equation}\label{eq:parameters_Young_inequality}
		 \displaystyle \frac {1}{p^\prime} = \frac {1}{r^\prime}+\frac {1}{q^\prime} -1,
	\end{equation}
	  for any $f \in 	{\displaystyle L^{q^\prime}(\mathbb {R}_+ )} $, since by \eqref{eq:heat_bd}  $G \in	 	{\displaystyle L^{p^\prime}(\mathbb R_+)}$, then
\begin{equation} 	 \label{eq:young_inequality_K*f} 
	 \| \overline{G}*_Df\|_{L^{p^\prime}}\leq \|G\|_{L^{r^\prime}}\|f\|_{L^{q^\prime}}.
	\end{equation}
    The interest reader may refer to \cite{1984_cannon} for more details. Concerning derivatives, we have that for  $f \in L^r(\mathbb R_+)$,  $1\le r\le \infty$, $ 
		\partial_x (G(t,\cdot)*_Df) = (\partial_x G(t,\cdot))*_Df.
$ Furthermore, whenever $f \in W^{1,r}(\mathbb R_+)$, with $1\le r\le \infty$ and $f(0)=0$, we have also
	\begin{align}\label{eq:young_inequality_partial_K*f} 
		\partial_x ( \overline{G}(t,\cdot)*_Df) = \int_{\mathbb{R}} G(t,\cdot-y)(\partial_x f)^{even}(y) dy:=G((t,\cdot)*(\partial_x f)^{even}(y),
	\end{align}
	where $(\partial_x f)^{even}(x) =1_{x>0}\partial_x f(x) +1_{x<0}\partial_x f(-x)$ is the even extension of $\partial_x f$.
	
\end{remark}

\medskip

Now let us go back to the problem \eqref{eq:s_lin}.

\medskip

We split the solution $\tilde{s}$ as $\tilde{s}=\tilde{u}+\tilde{v}$ with $\tilde{u}$ again is the solution of the initial boundary problem for the homogeneous heat equation \eqref{eq:u}-\eqref{eq:u_boundary_condition} and $\tilde{v}$ solution of the following non-homogeneous problem
\begin{align}
	\begin{aligned}\label{eq:v_lin}
		&\partial_t \tilde{v} = \partial_x^2 \tilde{v} +b\partial_x \tilde{v} +\widetilde\gamma \tilde{v} +b\partial_x \tilde{u} +\widetilde{\gamma} \tilde{u}, \qquad  &(t,x)\in (0,T]\times \mathbb R_+;\\
		&\tilde{v}(t,0)=0, &\qquad  t\in (0,T];\\
		&\tilde{v}(0,x)=v_0(x)=s_0(x), &\qquad  x\in \mathbb R_+.\\
	\end{aligned}
\end{align}

\medskip

\begin{definition}[Mild solution for $\tilde{v}$]%\label{def:mild_solution_v_linear} 
Let $2\le p\le \infty$. A function $\tilde{v}:[0,T]\times[0,\infty)\to \R$ is a $L^\infty ([0,T],W^{1,p}(\mathbb R_+))$\emph{ mild solution} of  \eqref{eq:v_lin} if it belongs to $L^\infty ([0,T],W^{1,p}(\mathbb R_+))$ and satisfies, for any $t\in [0,T]$,
	\begin{align}
		\begin{split}
		\tilde{v}(t,\cdot) &=  \overline{G}(t,\cdot)*_Dv_0  +
        \int_0^t  \overline{G}(t-\tau, \cdot)*_D(b(\tau,\cdot) \partial_x \tilde{v}(\tau,\cdot) +\widetilde\gamma(\tau,\cdot) \tilde{v}(\tau,\cdot)) d\tau \\
		&\hspace{1.5cm}+\int_0^t  \overline{G}(t-\tau, \cdot)*_D\left(b(\tau,\cdot)\partial_x \tilde{u}(\tau,\cdot)+ \widetilde\gamma(\tau,\cdot) \tilde{u}(\tau,\cdot)\right) d\tau,
		\end{split}\label{eq:PDE_lin_mild}
	\end{align} where  $G$ is the heat kernel \eqref{eq:heat_kernel}  and the operator $ \overline{G}(t,\cdot)*_Df $ is defined by \eqref {eq:K_conv}.
\end{definition}

\medskip

Our aim  is to show that under the regularity conditions upon the coefficient functions,  the boundary function and initial condition,  stated in Assumption \ref{hp:linearPDE},   the equation \eqref{eq:v_lin} is well-posed in $L^\infty ([0,T],W^{1,p}(\mathbb R_+))$ as a mild solution.
First we provide some a priori bounds for $\tilde{v}$.
\begin{proposition}%\label{prop:PDE_lin_bounds_v}
	Under Assumption \ref{hp:linearPDE}, for $ p\ge q \ge 2$  such that condition \eqref{eq:parameter_bound_p_beta} is fulfilled and  $r>0$ such that
	\begin{equation}\label{eq:PDE_lin_bounds_v_parameters}
	  \frac{1}{r}= \frac{1}{2} + \frac{1}{p}- \frac{1}{q},
	\end{equation} for the function $\tilde{v}$ mild solution to \eqref{eq:PDE_lin_mild} the following bounds hold, for any $t\in [0,T]$
	\begin{equation}\label{eq:v_apriori_1} 
		\begin{split}			 
				\|\tilde{v}(t,\cdot)\|_{L^p(\mathbb R_+)} \le   \|v_0\|_{L^p(\mathbb R_+)} & + CT^{3/4} \left(\|b\|_{L^\infty([0,T],L^2(\mathbb R_+))}+\|\gamma\|_{L^\infty([0,T],L^\infty(\mathbb R_+))}\right)\,\|\tilde{v}\|_{L^\infty([0,T],W^{1,p}(\mathbb R_+))}\\
				&   +T^{(1+1/r)/2}\left(\|b\|_{L^\infty([0,T],L^2(\mathbb R_+))} +\|\gamma\|_{L^\infty_t(L^\infty_x)}\right)\|\psi\|_{C^\beta([0,T])}
		\end{split}
	\end{equation}
	\begin{equation}\label{eq:v_apriori_2} 
	\begin{split}			 
 	\|\partial_x \tilde{v}(t,\cdot)\|_{L^p(\mathbb R_+)} \le  \|\partial_x v_0\|_{L^p(\mathbb R_+)} &+CT^{1/4} \left(\|b\|_{L^\infty([0,T],L^p(\mathbb R_+))}+\|\gamma\|_{L^\infty_t(L^\infty_x)}\right)\|\tilde{v}\|_{L^\infty([0,T],W^{1,p}(\mathbb R_+))}\\ & +CT^{1/(2r)}\left(\|b\|_{L^\infty([0,T],L^2(\mathbb R_+))} +\|\gamma\|_{L^\infty([0,T],L^\infty(\mathbb R_+))}\right) \|\psi\|_{C^\beta([0,T])}
	\end{split}
\end{equation}
	
\end{proposition}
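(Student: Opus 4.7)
The plan is to test the mild formulation \eqref{eq:PDE_lin_mild} in $L^p_x$ and estimate each summand separately via Young's inequality for the Dirichlet-kernel convolution \eqref{eq:young_inequality_K*f}--\eqref{eq:young_inequality_partial_K*f}, the heat-kernel $L^p_x$-norms \eqref{eq:heat_bd}, and the $W^{1,q}$-bound on $\tilde u$ from Proposition \ref{prop:heat_bd}, which (combined with \eqref{eq:bound_u_ptwise}) yields
$\|\tilde u\|_{L^\infty_t L^\infty_x}+\|\partial_x \tilde u\|_{L^\infty_t L^q_x}\le C\|\psi\|_{C^\beta([0,T])}$
for every $q<1/(1-2\beta)$. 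The three summands on the right of \eqref{eq:PDE_lin_mild} will produce, respectively, the piece $\|v_0\|_{L^p_x}$, the $\tilde v$-feedback piece with prefactor $T^{3/4}$, and the $\psi$-dependent piece with prefactor $T^{(1+1/r)/2}$.

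For the initial-datum piece Young with $r'=1$ gives at once $\|\overline{G}(t,\cdot)*_D v_0\|_{L^p_x}\le C\|v_0\|_{L^p_x}$. For the $b\,\partial_x \tilde v$ term in the $\tilde v$-feedback integral, H\"older (pointwise in $t$) with $b(t,\cdot)\in L^2_x$ and $\partial_x \tilde v(t,\cdot)\in L^p_x$ gives $b\,\partial_x \tilde v\in L^{q'}_x$ with $1/q'=1/2+1/p$; Young \eqref{eq:young_inequality_K*f} with kernel exponent $r'=2$ and the bound $\|G(t-\tau,\cdot)\|_{L^2_x}=c_2(t-\tau)^{-1/4}$ from \eqref{eq:heat_bd}, followed by integration in $\tau$, gives a contribution of size $T^{3/4}\|b\|_{L^\infty_t L^2_x}\|\partial_x \tilde v\|_{L^\infty_t L^p_x}$. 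The $\widetilde\gamma\tilde v$ term is treated by Young with $r'=1$ (using $\widetilde\gamma\in L^\infty_{t,x}$), giving a time integrand bounded by a constant; the resulting $t$-factor is absorbed into $CT^{3/4}$. For the $\psi$-piece, replace $\partial_x\tilde v$ by $\partial_x\tilde u\in L^q_x$ and $\tilde v$ by $\tilde u\in L^\infty_x\cap L^p_x$: the H\"older exponent becomes $1/\rho'=1/2+1/q$ and the Young relation $1+1/p=1/r+1/\rho'$ forces exactly $1/r=1/2+1/p-1/q$, that is \eqref{eq:PDE_lin_bounds_v_parameters}. The corresponding kernel norm $\|G(t-\tau,\cdot)\|_{L^r_x}\sim(t-\tau)^{-(1-1/r)/2}$ integrates to $t^{(1+1/r)/2}$, completing \eqref{eq:v_apriori_1}.

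For the derivative estimate \eqref{eq:v_apriori_2}, differentiate \eqref{eq:PDE_lin_mild} in $x$. On the $v_0$ piece, relation \eqref{eq:young_inequality_partial_K*f}, which exploits $v_0(0)=0$, transfers the derivative onto $v_0$, producing $\|\partial_x v_0\|_{L^p_x}$ via Young with $r'=1$. On the two time-integrals the derivative is instead moved onto the kernel, giving convolutions against $\partial_x \overline G$; by \eqref{eq:heat_bd} its $L^{r'}_x$-norm is $c'_{r'}(t-\tau)^{-(2-1/r')/2}$. Running the same H\"older/Young pairings as above with these stronger kernels and integrating in $\tau$ yields time exponents $1/4$ (from $r'=2$, for the $\tilde v$-feedback) and $1/(2r)$ (from $r'=r$, for the $\tilde u$-contribution), matching \eqref{eq:v_apriori_2}.

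The main obstacle is to keep all the time integrals convergent as the Young exponent $r'$ grows: the singularity of $\partial_x G$ in $L^{r'}_x$ is already of order $(t-\tau)^{-3/4}$ at $r'=2$, so any further loss of integrability in space would destroy the estimate. This is why we pair $b\in L^\infty_t L^2_x$ with $\partial_x\tilde v$ (or $\partial_x\tilde u$) in H\"older to keep $r'$ as small as possible; the same mechanism is the source of the constraint $q<1/(1-2\beta)$ (hence $\beta>1/4$ when $q\ge 2$) inherited from Proposition \ref{prop:heat_bd}: it is exactly what forces $1/r<1$ in \eqref{eq:PDE_lin_bounds_v_parameters} and makes the corresponding time integral finite.
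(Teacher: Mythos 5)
Your proposal follows essentially the same route as the paper's proof: the same term-by-term Young/H\"older pairings on the mild formulation (kernel in $L^1$ for the datum and $\widetilde\gamma$ terms, in $L^2$ for $b\,\partial_x\tilde v$, in $L^r$ for $b\,\partial_x\tilde u$ with $1/r=1/2+1/p-1/q$), the same use of $v_0(0)=0$ to shift the derivative onto the datum, and the same kernel singularities producing the exponents $T^{3/4}$, $T^{(1+1/r)/2}$, $T^{1/4}$, $T^{1/(2r)}$. The only cosmetic difference is that you invoke the pointwise bound \eqref{eq:bound_u_ptwise} for $\|\tilde u\|_{L^\infty_x}$ where the paper uses the Sobolev embedding $W^{1,q}\hookrightarrow L^\infty$; both are valid.
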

\begin{proof}

In the following,  for any $a,b\ge 2$, we denote by $h(a,b)$ the positive parameter such that $1/h(a,b)=1/a+1/b$; hence
$$
\frac{1}{b} = \frac{1}{h(a,b)}+\frac{1}{a/(a-1)}-1, 
$$ so that \eqref{eq:parameters_Young_inequality}  is satisfied. In particular, 
  from  the previous relation and by hypothesis \eqref{eq:PDE_lin_bounds_v_parameters} upon the parameters, one have with $(a=2,b=p)$ 
\begin{equation*}%\label{Young1}
\begin{split}
	\frac{1}{p} &= \frac{1}{h(2,p)}+\frac{1}{2}-1, \\ 
 \frac{1}{p} &= \frac{1}{q}+	\frac{1}{r}- \frac{1}{2} =    \frac{1}{h(2,q)}+	\frac{1}{r} - 1. 
\end{split}
\end{equation*}
Hence, for the set of parameters $(p,h(2,p),2)$ and $(p,r,h(2,q))$ condition \eqref{eq:PDE_lin_bounds_v_parameters} is satisfied and we may apply the Young inequality \eqref{eq:young_inequality_K*f}.  For simplicity we introduce here the following notations for the involved Banach  spaces $L^p_x=L^p(\mathbb R_+)$ in space and $L_t^p(L_x^q)=L^p([0,T],L^q(\mathbb R_+))$ in time and space.
Note that  by   Sobolev embedding (e.g.  \cite{1978_triebel} Eq. 4.6.1-(e)]) \begin{equation}\label{eq:embedding_Linfty_W_1,q}\|\tilde{u}\|_{L^\infty_x}\le C\|\tilde{u}\|_{W^{1,q}_x},\end{equation} and by H\"older inequality (since $q\le p$) $$\|\tilde{u}\|_{L^p_x}\le \|\tilde{u}\|_{L^q_x}^{q/p}\|\tilde{u}\|_{L^\infty_x}^{1-q/p},$$ we obtain \begin{equation}\label{eq:W^{1,p}_in_L^p}\|\tilde{u}\|_{L^p_x}\le C\|\tilde{u}\|_{W^{1,q}_x}.
\end{equation}

\medskip

	In order to estimate  the $L^p(\mathbb R_+)$ norm of $\tilde{v}$, we apply recursively  Young inequality for functions in $L^p$ spaces    and their convolution with the kernel $G$	%and taking $1/r=1/2+1/p-1/q$, as in
\eqref{eq:young_inequality_K*f} and  H\"older inequality (with $p,q\ge 2$). Hence, by considering also the $L^p$ norm of $G$ in \eqref{eq:heat_bd} and \eqref{eq:W^{1,p}_in_L^p}, we get from \eqref{eq:PDE_lin_mild}

	\begin{align*}
		\|\tilde{v}(t,\cdot)\|_{L^p_x} &\le \| \overline{G}(t,\cdot)*_D v_0\|_{L^p_x} +\int_0^t \left[\| \overline{G}(t-\tau,\cdot)*_D \left(b(\tau,\cdot)\partial_x \tilde{v}(\tau,\cdot)\right)\|_{L^p_x}+\| \overline{G}(t,\cdot)*_D(\widetilde\gamma(\tau,\cdot) \tilde{v}(\tau,\cdot))\|_{L^p_x}\right.\\
		& \hspace{3.5cm} +\left.\| \overline{G}(t,\cdot)*_D(b(\tau,\cdot)\partial_x \tilde{u}(\tau,\cdot))\|_{L^p_x}+\| \overline{G}(t,\cdot)*_D(\widetilde{\gamma}(\tau,\cdot) \tilde{u}(\tau,\cdot))\|_{L^p_x}\right] d\tau \\
		&\le \|G(t,\cdot)\|_{L^1_x}\|v_0\|_{L^p_x} +\int_0^t \left[
		\|G(t,\cdot)\|_{L^2_x}\|b(\tau,\cdot) \partial_x \tilde{v}(\tau,\cdot)\|_{L^{h(2,p)}_x}
		+\|G(t,\cdot)\|_{L^1_x}\|\widetilde\gamma(\tau,\cdot) \tilde{v}(\tau,\cdot)\|_{L^p_x}\right.\\		
		& \hspace{3.5cm} \left.+\|G(t,\cdot)\|_{L^r_x}\|b(\tau,\cdot)\partial_x \tilde{u}(\tau,\cdot)\|_{L^{h(2,q)}_x}
		+\|G(t,\cdot)\|_{L^1_x}\|\widetilde\gamma(\tau,\cdot) \tilde{u}(\tau,\cdot)\|_{L^p_x} \right] d\tau \\
	 	&\le c_1 \|v_0\|_{L^p_x} +C\int_0^t \left[
		(t-\tau)^{-1/4}\|b(\tau,\cdot)\|_{L^2_x}\|\partial_x \tilde{v}(\tau,\cdot)\|_{L^p_x}
		+\|\widetilde\gamma(\tau,\cdot)\|_{L^\infty_x} \|\tilde{v}(\tau,\cdot)\|_{L^p_x}\right.
		\\
		&\hspace{3cm}\left.+(t-\tau)^{-(1-1/r)/2}\|b(\tau,\cdot)\|_{L^2_x}\|\partial_x \tilde{u}(\tau,\cdot)\|_{L^q_x} +\|\widetilde\gamma_s\|_{L^\infty_x} \|\tilde{u}(\tau,\cdot)\|_{L^p_x} \right] d\tau \\
		&\le c_1\|v_0\|_{L^p_x} +(CT^{3/4}\|b\|_{L^\infty_t(L^2_x)}+CT\|\widetilde\gamma\|_{L^\infty_t(L^\infty_x)}) \|\tilde{v}(\tau,\cdot)\|_{L^\infty_t(W^{1,p}_x)}\\
		& \hspace{2cm} +(CT^{(1+1/r)/2}\|b\|_{L^\infty_t(L^2_x)}+CT\|\widetilde\gamma\|_{L^\infty_t(L^\infty_x)}) \|\tilde{u}(\tau,\cdot)\|_{L^\infty_t(W^{1,q}_x)}  \\
	%	&\le \|v_0\|_{L^p_x} +CT^{3/4} (\|b\|_{L^\infty_t(L^2_x)}+\|\widetilde\gamma\|_{L^\infty_t(L^\infty_x)})\|\tilde{v}\|_{L^\infty_t(W^{1,p}_x)}\\
	%	& \hspace{2cm} +T^{(1+1/r)/2}(\|b\|_{L^\infty_t(L^2_x)}+\|\widetilde\gamma\|_{L^\infty_t(L^\infty_x)})\|\tilde{u}\|_{L^\infty_t(W^{1,q}_x)}.
	\end{align*}
	
	The conclusion \eqref{eq:v_apriori_1} follows from the last inequality and  Proposition \ref{prop:heat_bd}, because of Assumption \ref{hp:linearPDE}-A1.

	\bigskip
	
	For obtaining the estimate for the $L^p_x$ norm of $\partial_x \tilde{v}$, we write the equation for $\partial_x \tilde{v}$, using the expressions for $\partial_x (G(t,\cdot)*_Df)$ given in \eqref{eq:young_inequality_partial_K*f} and the fact that $v_0(0)=0$:
	\begin{align*}
		\partial_x \tilde{v}(t,\cdot) &= G(t,\cdot)*_D(\partial_x v_0)^{even} +\int_0^t \partial_x G(t,\cdot)*_D\left(b(\tau,\cdot)\partial_x \tilde{v}(\tau,\cdot)+\widetilde\gamma(\tau,\cdot) \tilde{v}(\tau,\cdot)\right) d\tau\\
		& \hspace{3cm} +\int_0^t \partial_x G(t,\cdot)*_D\left(b(\tau,\cdot)\partial_x \tilde{u}(\tau,\cdot)+\widetilde\gamma(\tau,\cdot) \tilde{u}(\tau,\cdot)\right) d\tau.
	\end{align*}
In a similar way as for inequality \eqref{eq:v_apriori_1}, we prove the bound \eqref{eq:v_apriori_2}. Hence, by \eqref{eq:parameters_Young_inequality}-\eqref{eq:young_inequality_partial_K*f} ,\eqref{eq:PDE_lin_bounds_v_parameters} and \eqref{eq:W^{1,p}_in_L^p}, we obtain
	\begin{align*}
		\|\partial_x \tilde{v}(t,\cdot)\|_{L^p_x} &\le \|G(t,\cdot)*_D(\partial_x v_0)^{even}\|_{L^p_x}
		+\int_0^t \left[\|\partial_x G(t,\cdot)*_D(b(\tau,\cdot)\partial_x \tilde{v}(\tau,\cdot))\|_{L^p_x}+\|\partial_x G(t,\cdot)*_D(\widetilde\gamma(\tau,\cdot) \tilde{v}(\tau,\cdot))\|_{L^p_x} \right.
		\\
		&\hspace{4cm}+\left. \|\partial_x G(t,\cdot)*_D(b(\tau,\cdot) \partial_x \tilde{u}(\tau,\cdot))\|_{L^p_x}+ \|\partial_x G(t,\cdot)*_D(\widetilde\gamma(\tau,\cdot)  \tilde{u}(\tau,\cdot))\|_{L^p_x}\right] d\tau\\
		&\le \|G(t,\cdot)\|_{L^1_x}\|\partial_x v_0\|_{L^p_x} +\int_0^t \left[
		\|\partial_x G(t,\cdot)\|_{L^2_x}\|b(\tau,\cdot) \partial_x \tilde{v}(\tau,\cdot)\|_{L^{h(2,p)}_x}
		+\|\partial_x G(t,\cdot)\|_{L^1_x}\|\widetilde\gamma(\tau,\cdot) \tilde{v}(\tau,\cdot)\|_{L^p_x}\right.\\
		&\hspace{4cm} \left.
		+\|\partial_x G(t,\cdot)\|_{L^r_x}\|b(\tau,\cdot)\partial_x \tilde{u}(\tau,\cdot)\|_{L^{h(2,q)}_x}
		+\|\partial_x G(t,\cdot)\|_{L^1_x}\|\widetilde\gamma(\tau,\cdot) \tilde{u}(\tau,\cdot)\|_{L^p_x} \right] d\tau\nonumber\\
		&\le c'_1 \|\partial_x v_0\|_{L^p_x} +C\int_0^t \left[
		(t-\tau)^{-3/4} \|b(\tau,\cdot)\|_{L^2_x}\|\partial_x \tilde{v}(\tau,\cdot)\|_{L^p_x}
		+(t-\tau)^{-1/2}\|\widetilde\gamma(\tau,\cdot)\|_{L^\infty_x} \|\tilde{v}(\tau,\cdot)\|_{L^p_x}\right.\\
		&\hspace{3cm}\left.	+(t-\tau)^{-(2-1/r)/2}\|b(\tau,\cdot)\|_{L^2_x}\|\partial_x \tilde{u}(\tau,\cdot)\|_{L^q_x} +(t-\tau)^{-1/2}\|\widetilde\gamma_s\|_{L^\infty_x} \|\tilde{u}(\tau,\cdot)\|_{L^p_x} \right] d\tau\\
		&\le c'_1\|\partial_x v_0\|_{L^p_x} +\left(C T^{1/4}\|b\|_{L^\infty_t(L^2_x)}+CT^{1/2} \|\widetilde\gamma\|_{L^\infty_t(L^\infty_x)}\right) \|\tilde{v}(\tau,\cdot)\|_{L^\infty_t(W^{1,p}_x)}\\
		&\hspace{2cm} +\left(CT^{1/(2r)}\|b\|_{L^\infty_t(L^2_x)}+CT^{1/2}\|\widetilde\gamma\|_{L^\infty_t(L^\infty_x)}\right)\|\tilde{u}(\tau,\cdot)\|_{L^\infty_t(W^{1,q}_x)} \\
		\end{align*}
By Proposition \ref{prop:heat_bd} and Assumption \ref{hp:linearPDE}-A1, we obtain the bound \eqref{eq:v_apriori_2}.
	
\end{proof}

Now we are ready to give a result of  existence of  a  mild solution $\tilde{v}$ in $L^\infty([0,T],W^{1,p}(\mathbb R_+))$ that is it satisfies equation   \eqref{eq:PDE_lin_mild}.
\begin{proposition}\label{prop:PDE_lin}
	Under Assumption \ref{hp:linearPDE}, with $2\le p<1/(1-2\beta)$, the PDE \eqref{eq:v_lin} admits a unique   mild solution $\tilde{v}$ in $L^\infty ([0,T],W^{1,p}(\mathbb R_+))$. Furthermore, for some constant $C_T>0$, the solution $\tilde{v}$  satisfies  the following bound
	\begin{equation}\label{eq:bound_v}
		\begin{split}
		\sup_{t\in[0,T]}\|\tilde{v}(t,\cdot)\|_{W^{1,p}(\mathbb R_+)} \le &C_T (\|b\|_{L^\infty_t(L^2_x)}+\|\widetilde\gamma\|_{L^\infty_{t,x}})^4\|v_0\|_{W^{1,p}(\mathbb R_+)} \\ &+C_T \left(\|b\|_{L^\infty([0,T]L^2(\mathbb R_+))}+\|\widetilde\gamma\|_{L^\infty([0,T]\times (\mathbb R_+))}\right)\|\psi\|_{C^\beta}. 
		\end{split}
	\end{equation}
	%for a Borel locally bounded function $\rho$.\todo{want to add exact form of $\rho$? should be $(\|b\|_{L^\infty_t(L^2_x)}+\|\widetilde\gamma\|_{L^\infty_{t,x}})^{1/(1-\beta)}$, where $\beta=\max\{3/4,(2-1/r)/2\}=3/4+(1/(2q)-1/(2p))\}$}
\end{proposition}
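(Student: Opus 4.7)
The plan is a Banach fixed-point argument on the mild formulation \eqref{eq:PDE_lin_mild}, performed first on a short time interval whose length depends only on the coefficient norms, and then iterated to cover all of $[0,T]$. Throughout I set $M:=\|b\|_{L^\infty([0,T],L^2(\mathbb R_+))}+\|\widetilde\gamma\|_{L^\infty([0,T]\times\mathbb R_+)}$ and apply the previous a priori estimates with the choice $q=p$, so that \eqref{eq:parameter_bound_p_beta} reduces exactly to the hypothesis $p<1/(1-2\beta)$, and correspondingly $r=2$ from \eqref{eq:PDE_lin_bounds_v_parameters}; this makes the $T$-exponents appearing in \eqref{eq:v_apriori_1}-\eqref{eq:v_apriori_2} equal to $3/4$ or $1/4$.

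For $T_0\in(0,T]$ to be chosen I would introduce the affine map $\Phi$ on $L^\infty([0,T_0],W^{1,p}(\mathbb R_+))$ by letting $\Phi(\tilde{v})$ be the right-hand side of \eqref{eq:PDE_lin_mild}. The computations leading to \eqref{eq:v_apriori_1}-\eqref{eq:v_apriori_2} use only the mild formulation, not that $\tilde{v}$ is a solution, and hence give
\[\|\Phi(\tilde{v})\|_{L^\infty_t W^{1,p}_x}\le C\|v_0\|_{W^{1,p}_x}+C\,T_0^{1/4}M\,\|\tilde{v}\|_{L^\infty_t W^{1,p}_x}+C\,T_0^{1/4}M\,\|\psi\|_{C^\beta([0,T])}.\]
Fixing $T_0$ small enough that $C T_0^{1/4}M\le 1/2$, i.e.\ $T_0\simeq M^{-4}$, makes $\Phi$ stabilize a suitable closed ball of $L^\infty([0,T_0],W^{1,p})$. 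Applying the same chain of inequalities to the difference $\Phi(\tilde{v}_1)-\Phi(\tilde{v}_2)$, which satisfies the mild equation with zero initial datum and with the $\tilde{u}$-source removed (those terms cancel), gives
\[\|\Phi(\tilde{v}_1)-\Phi(\tilde{v}_2)\|_{L^\infty_t W^{1,p}_x}\le C\,T_0^{1/4}M\,\|\tilde{v}_1-\tilde{v}_2\|_{L^\infty_t W^{1,p}_x}\le\tfrac12\|\tilde{v}_1-\tilde{v}_2\|_{L^\infty_t W^{1,p}_x},\]
so Banach's contraction theorem furnishes a unique mild solution on $[0,T_0]$.

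To extend to $[0,T]$ I would iterate: on $[kT_0,(k+1)T_0]$ one solves the same problem with initial datum $\tilde{v}(kT_0,\cdot)\in W^{1,p}(\mathbb R_+)$. The boundary compatibility $\tilde{v}(kT_0,0)=0$ is automatically preserved, because the Dirichlet convolution $\overline{G}*_D$ of \eqref{eq:K_conv} produces functions vanishing at $x=0$ thanks to the odd reflection. Since $T_0$ depends only on $M$, a finite number $N\simeq TM^4$ of such steps covers $[0,T]$ and yields a unique $\tilde{v}\in L^\infty([0,T],W^{1,p}(\mathbb R_+))$. Concatenating the per-step estimates \eqref{eq:v_apriori_1}-\eqref{eq:v_apriori_2} over these $N$ intervals, and absorbing all $T$-dependent combinatorial factors into a single constant $C_T$, produces the global quantitative bound \eqref{eq:bound_v}.

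The main technical point is the careful bookkeeping of the exponents in Young's and H\"older's inequalities, so that both the smallness factor $T_0^{1/4}M$ and the integrability constraint \eqref{eq:PDE_lin_bounds_v_parameters} are available at the same time. This has already been worked out in the proof of \eqref{eq:v_apriori_1}-\eqref{eq:v_apriori_2}, so the present step essentially consists in packaging those bounds into a contraction-plus-iteration scheme and does not introduce a genuine new difficulty.
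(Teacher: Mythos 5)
Your argument is essentially the paper's: the authors also define the affine solution map given by the right-hand side of \eqref{eq:PDE_lin_mild}, show via the computations behind \eqref{eq:v_apriori_1}--\eqref{eq:v_apriori_2} that it is a contraction once $CT^{3/4}+CT^{1/4}\le 1/2$, and then extend to all of $[0,T]$ by a standard iteration; so the existence and uniqueness part of your proposal is correct and matches the paper.

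The one point of divergence is how the quantitative bound \eqref{eq:bound_v} is obtained. The paper does not concatenate per-step estimates: it applies a weakly singular Gronwall inequality (\cite{2019_webb}, Theorem 3.2) directly to the a priori bounds \eqref{eq:v_apriori_1}--\eqref{eq:v_apriori_2} on the whole interval $[0,T]$, together with Proposition \ref{prop:heat_bd}. Your alternative --- iterating over $N\simeq T M^4$ subintervals of length $T_0\simeq M^{-4}$ --- does prove \emph{a} global bound, but each step multiplies the incoming datum by a constant strictly larger than $1$ (after absorbing the $\tfrac12\|\tilde v\|$ term), so the resulting constant in front of $\|v_0\|_{W^{1,p}}$ grows like $C^{N}=C^{TM^4}$, i.e.\ exponentially in $M=\|b\|_{L^\infty_t(L^2_x)}+\|\widetilde\gamma\|_{L^\infty_{t,x}}$, rather than like the fourth power $M^4$ claimed in \eqref{eq:bound_v}. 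If you want the polynomial dependence as stated, you should replace the concatenation step by the singular Gronwall argument applied to \eqref{eq:v_apriori_1}--\eqref{eq:v_apriori_2}; otherwise your scheme yields a weaker (though still finite and $T$-uniform) constant.
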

\begin{proof} 
	To show the existence and uniqueness of $\tilde{v}$ satisfying \eqref{eq:PDE_lin_mild}, we define a map $F:L^\infty([0,T],W^{1,p}(\mathbb R_+))\to L^\infty([0,T],W^{1,p}(\mathbb R_+))$ such that, for any $h\in L^\infty([0,T],W^{1,p}(\mathbb R_+))$,
	\begin{align*}
		F(h) =  \overline{G}(t,\cdot)*_Dv_0 &+\int_0^t  \overline{G}(t,\cdot)*_D(b(\tau,\cdot)\partial_x h(\tau,\cdot)+\widetilde\gamma(\tau,\cdot) h(\tau,\cdot)) d\tau \\&  +\int_0^t  \overline{G}(t,\cdot)*_D(b(\tau,\cdot)\partial_x \tilde{u}(\tau,\cdot)+\widetilde\gamma(\tau,\cdot) \tilde{u}(\tau,\cdot) d\tau.
	\end{align*}
	We note that $\tilde{v}\in L^\infty([0,T],W^{1,p}(\mathbb R_+))$ satisfies \eqref{eq:PDE_lin_mild} if and only if $\tilde{v}$ is a fixed point for $F$. By proceeding as for the bounds \eqref{eq:v_apriori_1}-\eqref{eq:v_apriori_2}, we obtain that $F$ is well-defined and, for $T$ sufficiently small, such that $$CT^{3/4}+CT^{1/4}\le 1/2,$$ $F$ is a contraction on the time interval $[0,T]$. Therefore, by the fixed point theorem, for small $T>0$, there exists a unique fixed point $\tilde{v}$ of the map $F$ and hence a unique mild solution $\tilde{v}$ in $L^\infty([0,T],W^{1,p}(\mathbb R_+))$. The existence and uniqueness for every $T>0$ follows by a standard iteration argument.
	
	 Finally, the bound \eqref{eq:bound_v} follows by a Gronwall-type inequality \cite{2019_webb}(Theorem 3.2.) applied to the bounds \eqref{eq:v_apriori_1} on $\|\tilde{v}_t\|_{L^p(\mathbb R_+)}$ and \eqref{eq:v_apriori_2} on $\|\partial_x \tilde{v}_t\|_{L^p(\mathbb R_+)}$, together with Proposition \ref{prop:heat_bd}.
\end{proof}

\begin{corollary}\label{cor:wellposed_s}
	Under Assumption \ref{hp:linearPDE}, with $2\le p<1/(1-2\beta)$, the linear PDE \eqref{eq:s_lin} admits a unique $L^\infty([0,T],W^{1,p}(\mathbb R_+))$   mild solution $\tilde{s}$, i.e.    satisfying \eqref{eq:PDE_s_lin_mild}. This solution can be decomposed as $\tilde{s}=\tilde{u}+\tilde{v}$, where $\tilde{u}$ is the unique $L^\infty([0,T],W^{1,p}(\mathbb R_+))$ mild solution  of the system \eqref{eq:u}-\eqref{eq:u_boundary_condition} and $\tilde{v}$ is the unique mild solution in $L^\infty([0,T],W^{1,p}(\mathbb R_+))$ of \eqref{eq:v_lin}.
\end{corollary}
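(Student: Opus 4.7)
The plan is to deduce both existence and uniqueness from the splitting $\tilde s = \tilde u + \tilde v$ together with the two preceding propositions, which already do all the work. Because the hypothesis $2\le p<1/(1-2\beta)$ is contained in the range \eqref{eq:parameter_bound_p_beta}, I would apply Proposition \ref{prop:heat_bd} with $q=p$ to obtain a function $\tilde u \in L^\infty([0,T],W^{1,p}(\mathbb R_+))$ satisfying \eqref{eq:u}--\eqref{eq:u_boundary_condition} and admitting the heat-kernel representation \eqref{eq:sol_heat}; its uniqueness in $L^\infty([0,T],W^{1,p}(\mathbb R_+))$ follows because any other mild solution must equal the explicit formula \eqref{eq:sol_heat} by linearity. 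With this $\tilde u$ fixed, Assumption \ref{hp:linearPDE} on $(b,\widetilde\gamma)$ allows me to invoke Proposition \ref{prop:PDE_lin}, which produces a unique mild solution $\tilde v\in L^\infty([0,T],W^{1,p}(\mathbb R_+))$ of \eqref{eq:v_lin} with initial datum $v_0=s_0$ and forcing driven by $\tilde u$.

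For existence I would set $\tilde s:=\tilde u+\tilde v$ and check directly that it satisfies \eqref{eq:PDE_s_lin_mild}. Summing the representation \eqref{eq:sol_heat} for $\tilde u$ (which has zero initial datum) and the mild identity \eqref{eq:PDE_lin_mild} for $\tilde v$, the boundary contribution $-2\int_0^t \partial_x G(t-\tau,\cdot)\psi(\tau)\,d\tau$ is supplied entirely by $\tilde u$, the initial-data term $\overline G(t,\cdot)*_D s_0$ is supplied entirely by $\tilde v$, and the two source integrals in \eqref{eq:PDE_lin_mild} recombine by linearity of $\overline G(t-\tau,\cdot)*_D$ into the single integral
\[
\int_0^t \overline G(t-\tau,\cdot)*_D\bigl(b(\tau,\cdot)\partial_x \tilde s(\tau,\cdot)+\widetilde\gamma(\tau,\cdot)\tilde s(\tau,\cdot)\bigr)\,d\tau,
\]
which is precisely the source term appearing in \eqref{eq:PDE_s_lin_mild}.

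For uniqueness I would reverse the splitting. Given two mild solutions $\tilde s_1,\tilde s_2\in L^\infty([0,T],W^{1,p}(\mathbb R_+))$ of \eqref{eq:PDE_s_lin_mild}, define $\tilde v_i:=\tilde s_i-\tilde u$ for $i=1,2$. Each $\tilde v_i$ lies in $L^\infty([0,T],W^{1,p}(\mathbb R_+))$ and, subtracting \eqref{eq:sol_heat} from \eqref{eq:PDE_s_lin_mild}, each satisfies the mild identity \eqref{eq:PDE_lin_mild}. The uniqueness clause of Proposition \ref{prop:PDE_lin} then forces $\tilde v_1=\tilde v_2$, hence $\tilde s_1=\tilde s_2$.

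The anticipated obstacle is minor and essentially linear-algebraic bookkeeping: one must verify that \eqref{eq:PDE_s_lin_mild} decomposes exactly into the heat-kernel representation of $\tilde u$ and the mild identity \eqref{eq:PDE_lin_mild} for $\tilde v$. The only subtlety worth recording is the compatibility condition $\psi(0)=0$ from Assumption A1, which is precisely what makes the zero initial datum of $\tilde u$ consistent with the nontrivial boundary datum $\psi$ at $t=0$; once this is observed, all remaining manipulations are linear and the corollary follows without further estimate.
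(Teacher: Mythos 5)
Your proposal is correct and follows essentially the same route as the paper: the linearity of the mild formulations gives a bijection $\tilde v \leftrightarrow \tilde s=\tilde u+\tilde v$ between mild solutions of \eqref{eq:PDE_lin_mild} and of \eqref{eq:PDE_s_lin_mild}, so well-posedness of $\tilde s$ is inherited from Proposition \ref{prop:PDE_lin} (the paper states this equivalence in one line; you simply spell out both directions).
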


\begin{proof}
	If $\tilde{u}$ is the unique $L^\infty([0,T],W^{1,p}(\mathbb R_+))$ mild solution  of the system \eqref{eq:u}-\eqref{eq:u_boundary_condition},  then from Proposition \ref{prop:PDE_lin} $\tilde{v}$ is a $L^\infty([0,T],W^{1,p}(\mathbb R_+))$ mild solution  of \eqref{eq:PDE_lin_mild} if and only if $\tilde{s}=\tilde{u}+\tilde{v}$ is a $L^\infty([0,T],W^{1,p}(\mathbb R_+))$ mild solution of  \eqref{eq:PDE_s_lin_mild}. Hence, well-posedness for $\tilde{s}$ follows from well-posedness of $\tilde{v}$.
\end{proof}

\begin{remark}\label{rmk:wellposed_lin_infty}
	If $\psi \in C^1_t$, then well-posedness for mild solutions $\tilde{s}$ to \eqref{eq:s_lin} holds also for $p=\infty$,  that is it is a $L^\infty([0,T],W^{1,\infty}(\mathbb R_+))$, with initial condition $v_0$ in $W^{1,\infty}(\mathbb R_+)$: indeed well-posedness in $L^\infty([0,T],W^{1,\infty}(\mathbb R_+))$ holds not only for $\tilde{v}$ solution to \eqref{eq:v_lin} but also for $\tilde{u}$ solution to the heat equation \eqref{eq:u}-\eqref{eq:u_boundary_condition}, by Proposition \ref{prop:bound_u_W_alpha,p}.
\end{remark}

\begin{remark}\label{rmk:time_continuity_lin}
	Assume $2\le p<1/(1-2\beta)$. The mild solution $v$ to \eqref{eq:v_lin} satisfies, for every $s<t$,
	\begin{align*}
		\tilde{v}_t-\tilde{v}_s = \int_0^s  (\overline{G}(t-r, \cdot)- \overline{G}({s-r},\cdot))*_DA_r dr +\int_s^t  \overline{G}({t-r},\cdot)*_DA_rdr
	\end{align*}
	where $A_r=b_r\partial_x \tilde{v}_r+\widetilde\gamma_r\tilde{v}_r +b_r\partial_x \tilde{u}_r+\gamma_r\tilde{u}_r$. Now we take the limit for $t-s\to 0$. For a.e. $r<s$, the term $(\overline{G}(t-r, \cdot)- \overline{G}({s-r},\cdot))*_D A_r$ tends to $0$ in $L^p$ and, proceeding as in \eqref{eq:v_apriori_1}, one can show that the $L^p$ norm of $(\overline{G}(t-r, \cdot)- \overline{G}({s-r},\cdot))*_D A_r$ is bounded by a time-integrable function, uniformly in $s$ and $t$, hence $\int_0^s (\overline{G}(t-r, \cdot)- \overline{G}({s-r},\cdot))*_D A_r dr$ tends to $0$ in $L^p$. Proceeding again as in \eqref{eq:v_apriori_1}, one can also show that the $\int_s^t \overline{G}({t-r},\cdot)*_D A_rdr$ tends to $0$ in $L^p$. Hence $t\mapsto v_t$ is actually continuous with values in $L^p$. 	
	With a similar argument, one gets that $t\mapsto \tilde{u}_t$ is continuous in $L^p$. Hence also $t\mapsto s_t$ is continuous in $L^p$, for $2\le p<1/(1-2\beta)$.
\end{remark}

We conclude the study of the linearized system solution with some  stability bounds for both the components of the splitting, which will be useful later.

\begin{proposition}\label{prop:stability_lin}
	Let   $b^i$, $\widetilde\gamma^i$, $v^i_0$, $\psi^i$, $i=1,2$ satisfy Assumption \ref{hp:linearPDE} with $p=2$ and let us denote by   $\tilde{u}^i$ and  $\tilde{v}^i$, $i=1,2$, the corresponding $L^\infty([0,T],W^{1,2}(\mathbb R_+))$ solutions of  \eqref{eq:u}-\eqref{eq:u_boundary_condition} and \eqref{eq:PDE_lin_mild}, respectively. Let us assume in addition that $\widetilde\gamma^1$, $\widetilde\gamma^2$ are in $C([0,T],L^2(\mathbb R_+))$. Then we have
	\begin{align*}
		\sup_{t\in[0,T]}\|\tilde{u}^1-\tilde{u}^2\|_{W^{1,2}(\mathbb R_+)} &\le C\|\psi^1-\psi^2\|_{C^\beta_t},\\
		\sup_{t\in[0,T]}\|\tilde{v}^1-\tilde{v}^2\|_{W^{1,2}(\mathbb R_+)} &\le \rho_{1,b,\tilde{\gamma}}  \|v^1_0-v^2_0\|_{W^{1,2}(\mathbb R_+)}+\rho_{2,b,\tilde{\gamma}}\|\psi^1-\psi^2\|_{C^\beta([0,T])}\\
		&\quad +\rho_{3,b,\tilde{\gamma},v_0,\psi} \left(\|b^1-b^2\|_{L^\infty([0,T],L^2(\mathbb R_+))}+\|\widetilde\gamma^1-\widetilde\gamma^2\|_{C([0,T],L^2(\mathbb R_+))}\right)
	\end{align*}
	where $\rho_{\cdot}$ are Borel locally bounded function such that $\rho_{i,b,\tilde{\gamma}}=\rho_i\left(\|b^*\|_{L^\infty([0,T],L^2(\mathbb R_+))},\|\widetilde\gamma^*\|_{L^\infty([0,T]\times\mathbb R_+)}\right)$, for $i=1,2$ and $\rho_{3,b,\tilde{\gamma},v_0,\psi}=\rho_3\left(|\tilde{v}^i_0\|_{W^{1,2}(\mathbb R_+)},\|\psi^i\|_{C^\beta([0,T])},\|b^*\|_{L^\infty([0,T],L^2(\mathbb R_+))},\|\widetilde\gamma^*\|_{L^\infty([0,T]\times\mathbb R_+)}\right)$.  (We have used the notation   $\rho_{\cdot}(a^*)$  for   $\rho_{\cdot}(a^1,a^2)$.)
\end{proposition}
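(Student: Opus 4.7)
I would treat the two bounds separately.

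The bound on $\tilde u^1-\tilde u^2$ is immediate: by linearity of \eqref{eq:u}-\eqref{eq:u_boundary_condition} in the boundary datum, $\tilde u^1-\tilde u^2$ is the heat solution with boundary value $\psi^1-\psi^2$ (which again vanishes at $t=0$), and Proposition \ref{prop:heat_bd} applied with $q=2$ (which satisfies $2<1/(1-2\beta)$ since $\beta>1/4$) gives the desired estimate with the same constant $C$ as in \eqref{eq:u_bound}.

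For $w:=\tilde v^1-\tilde v^2$, I would subtract the mild identities \eqref{eq:PDE_lin_mild} written for $\tilde v^1$ and $\tilde v^2$. Using the standard splitting $b^i\partial_x\tilde v^i = b^1\partial_x w +(b^1-b^2)\partial_x\tilde v^2$ (and analogously for the $\widetilde\gamma$ and $\tilde u$ terms), the result is precisely the mild equation \eqref{eq:PDE_lin_mild} for $w$ with coefficients $(b^1,\widetilde\gamma^1)$, initial data $v_0^1-v_0^2$, ``boundary correction'' $\tilde u^1-\tilde u^2$ (playing the role of $\tilde u$ in \eqref{eq:v_lin}), plus an additional inhomogeneous source
\[
F_\tau := (b^1-b^2)_\tau\,\partial_x\tilde s^2_\tau +(\widetilde\gamma^1-\widetilde\gamma^2)_\tau\,\tilde s^2_\tau,\qquad \tilde s^2:=\tilde u^2+\tilde v^2.
\]
Applying the bound \eqref{eq:bound_v} from Proposition \ref{prop:PDE_lin}, combined with the first-step estimate on $\tilde u^1-\tilde u^2$, handles the first two terms in the target inequality and determines $\rho_{1,b,\tilde\gamma},\rho_{2,b,\tilde\gamma}$ as polynomial functions of $\|b^*\|_{L^\infty_tL^2_x}$ and $\|\widetilde\gamma^*\|_{L^\infty_{t,x}}$.

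The contribution of $F$ to $\|w\|_{L^\infty_tW^{1,2}_x}$ is estimated by reproducing the computations leading to \eqref{eq:v_apriori_1}-\eqref{eq:v_apriori_2} with $F_\tau$ in place of $b\partial_x\tilde u+\widetilde\gamma\tilde u$: Young's inequality applied to $\overline G(t-\tau,\cdot)*_DF_\tau$ and $\partial_x\overline G(t-\tau,\cdot)*_DF_\tau$, placing $(b^1-b^2)_\tau\partial_x\tilde s^2_\tau$ in $L^1_x$ (via $\|b^1-b^2\|_{L^2_x}\|\partial_x\tilde s^2\|_{L^2_x}$) and $(\widetilde\gamma^1-\widetilde\gamma^2)_\tau\tilde s^2_\tau$ in $L^2_x$ through the 1D Sobolev embedding $W^{1,2}(\mathbb R_+)\hookrightarrow L^\infty(\mathbb R_+)$. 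The time-integrals $\int_0^t(t-\tau)^{-1/4}d\tau$ and $\int_0^t(t-\tau)^{-3/4}d\tau$ that arise are finite. The norm $\|\tilde s^2\|_{L^\infty_tW^{1,2}_x}$ appearing in the estimate is in turn controlled by \eqref{eq:u_bound} and \eqref{eq:bound_v}, which produces the dependence of $\rho_{3,b,\tilde\gamma,v_0,\psi}$ on $\|v_0^i\|_{W^{1,2}_x}$ and $\|\psi^i\|_{C^\beta}$. A final Gronwall-type argument, as at the end of the proof of Proposition \ref{prop:PDE_lin}, closes the estimate.

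The main obstacle will be purely bookkeeping: identifying the correct Lebesgue exponents so that Young's inequality applies to each product in $F$ and the time-integrals remain finite. The role of the extra hypothesis $\widetilde\gamma^i\in C([0,T],L^2(\mathbb R_+))$, not present in Assumption \ref{hp:linearPDE}, is precisely to turn $\widetilde\gamma^1-\widetilde\gamma^2$ into a source that can be absorbed in $L^2_x$ via Sobolev embedding; without some integrability of the difference in $x$, merely knowing $\widetilde\gamma^1-\widetilde\gamma^2\in L^\infty_{t,x}$ would not suffice to produce a spatial decay allowing convolution with $\overline G$.
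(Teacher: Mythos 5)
Your proposal is correct and follows essentially the same route as the paper: the $\tilde u$ bound by linearity and Proposition \ref{prop:heat_bd}, and for $\tilde v^1-\tilde v^2$ the same coefficient-freezing decomposition (equation for the difference with coefficients $(b^1,\widetilde\gamma^1)$ plus a source carrying $b^1-b^2$ and $\widetilde\gamma^1-\widetilde\gamma^2$), estimated via the Young-inequality computations of \eqref{eq:v_apriori_1}--\eqref{eq:v_apriori_2}, the Sobolev embedding $W^{1,2}\hookrightarrow L^\infty$ for the $\widetilde\gamma$ difference, and a Gronwall-type closure. The only cosmetic difference is that you bundle $\tilde u^2+\tilde v^2$ into $\tilde s^2$ where the paper keeps the two contributions separate.
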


\begin{proof}
	The bound on $\tilde{u}^1-\tilde{u}^2$ follows immediately from \eqref{eq:u_bound} by linearity. Concerning the bound on $\tilde{v}^1-\tilde{v}^2$, by proceeding as in the a priori bounds \eqref{eq:v_apriori_1} and \eqref{eq:v_apriori_2}, we get
	\begin{align*}
		\|\tilde{v}^1_t-\tilde{v}^2_t\|_{W^{1,2}_x} &\le \|v^1_0-v^2_0\|_{W^{1,2}_x} +A+B,
	\end{align*}
	where the term $A$, resp. $B$, takes into account the contribution of $b^i$, resp. $\widetilde\gamma^i$, $i=1,2$. Precisely, for $A$ we have
	\begin{align*}
		A&= C\int_0^t (t-s)^{-3/4} \left[\|b^1_s\|_{L^2_x}\|\tilde{v}^1_s-\tilde{v}^2_s\|_{W^{1,2}_x}
		+\|b^1_s-b^2_s\|_{L^2_x}\|\tilde{v}^2_s\|_{W^{1,2}_x}\right.\\
		&\quad \quad \quad \left. +\|b^1_s\|_{L^2_x}\|\tilde{u}^1_s-\tilde{u}^2_s\|_{W^{1,2}_x}
		+\|b^1_s-b^2_s\|_{L^2_x}\|\tilde{u}^2_s\|_{W^{1,2}_x} \right] ds.
	\end{align*}
	For $B$ we have, by \eqref{eq:embedding_Linfty_W_1,q}% the Sobolev embedding $W^{1,2}_x\subseteq L^\infty_x$, 
	\begin{align*}
		B&= C\int_0^t (t-s)^{-1/2} \left[\|\widetilde\gamma^1_s\|_{L^\infty_x}\|\tilde{v}^1_s-\tilde{v}^2_s\|_{L^2_x} + \|\widetilde\gamma^1_s-\widetilde\gamma^2_s\|_{L^2_x}\|\tilde{v}^2\|_{L^\infty_x}\right.\\
		&\quad\quad\quad \left.\|\widetilde\gamma^1_s\|_{L^\infty_x}\|\tilde{u}^1_s-\tilde{u}^2_s\|_{L^2_x} + \|\widetilde\gamma^1_s-\widetilde\gamma^2_s\|_{L^2_x}\|\tilde{u}^2\|_{L^\infty_x} \right] ds\\
		&\le C\int_0^t (t-s)^{-1/2} \left[\|\widetilde\gamma^1_s\|_{L^\infty_x}\|\tilde{v}^1_s-\tilde{v}^2_s\|_{L^2_x} + \|\widetilde\gamma^1_s-\widetilde\gamma^2_s\|_{L^2_x}\|\tilde{v}^2\|_{W^{1,2}_x}\right.\\
		&\quad\quad\quad \left.\|\widetilde\gamma^1_s\|_{L^\infty_x}\|\tilde{u}^1_s-\tilde{u}^2_s\|_{L^2_x} + \|\widetilde\gamma^1_s-\widetilde\gamma^2_s\|_{L^2_x}\|\tilde{u}^2\|_{W^{1,2}_x} \right] ds.
	\end{align*}
	The conclusion derives from a Gronwall-type inequality \cite{2019_webb}(Theorem 3.2), from the bound \eqref{eq:bound_v} on $\tilde{v}^1$ and $\tilde{v}^2$, the bound \eqref{eq:u_bound} on $\tilde{u}^1$, $\tilde{u}^2$ and $\tilde{u}^1-\tilde{u}^2$.
\end{proof}

		\section{Existence, uniqueness and regularity  for the nonlinear system}\label{sec:nonlinear_system}
		
		This section contains the main result of this paper, that is  the well-posedness for the  nonlinear system \eqref{eq:det_nostro_s}-\eqref{eq:stoc_nostro_boundary_cond}. The local well-posedness   is proven by a fixed point argument, while the global one  by an a priori bound.  We follow the proof outlined in \cite{2005_GN_NLA}, but considering $v$ instead of $s$, for the energy bound. 
		In the following, we may let the time horizon $T$ vary in a compact interval $[0,T_{fin}]$.
		\medskip

		Let us gather together all the  assumptions upon the initial and boundary conditions of the dynamical system \eqref{eq:det_nostro_s}-\eqref{eq:stoc_nostro_boundary_cond}. We recall that $\psi$ (as well as $s_0$ and $c_0$) is given; in particular our analysis applies in particular to any random boundary condition whose sample paths satisfy the assumption below.
		\begin{assumption}\label{hp:main_result}
			In the following, $\eta$, $m$, $c_0$ are non negative constants and $C_0\ge m>0$. %\todo{condition $M<1$ has been removed}
			\begin{itemize}
				\item[B1.] The boundary condition $\psi$ satisfies
				\begin{align*}
					&\psi \in C^\beta([0, T_{fin}]) \text{ for some }1/4<\beta<1/2,\\
					&\psi(0)=0,\\
					&0\le \psi\le \eta.
				\end{align*}
				%(The condition $\beta<1/2$ is not needed but will be used in the bounds.)
				\item[B2.] The initial conditions $s_0$ and $c_0$ satisfy
				\begin{align*}
					&0\le s_0(x)\le \eta,\quad 0< m\le c_0(x)\le C_0,\quad \forall x \in [0,\infty),\\
					&\text{  $C_0<1$ if $B=-1$},\\
					&s_0\in W^{1,2}_x,\quad C_0-c_0 \in W^{1,2}_x,\\
					%&s_0\in C^3([0,\infty))\cap L^2_x,\quad c_0\in C^2([0,\infty)),\quad C_0-c_0 \in W^{1,2}_x,\\
					&s_0(0)=0.
				\end{align*}
				\item[B3.] If $B=1$, then $\eta<1$. 
			\end{itemize}
		\end{assumption}

  \begin{remark}\label{rmk:pos_varphi} Recalling that $\varphi(c)=1+Bc$, the assumption $C_0<1$ in the case $B=-1$ is equivalent to
  \begin{equation*}
      \varphi_m:=\min_{c\in[0,C_0]}\varphi(c)>0.
  \end{equation*}
   The positivity of $\varphi_m$ is automatically satisfied whenever $B=1$.
   Furthermore, note that in the Assumption \ref{hp:main_result} the hypothesis $\psi(0)=0$ is not strictly necessary, but it simplifies some calculation avoiding more technicality. The last assumption on the initial condition $s_0$ is a right consequence since at the boundary $s_0(0)=\psi(0)$.
        \end{remark}

		\paragraph{The linear system.} The first step of the analysis is  to consider a linearized version of the \eqref{eq:det_nostro_s}-\eqref{eq:det_nostro_c}, where the term $s$ in the ODE for $c$ is replaced by a given term $f$. This makes the equation for $s$ linear.
		
		\begin{assumption}\label{hp_f}
		Let	 us consider the class of Borel function $f:[0,T]\times [0,\infty)\to\mathbb{R}$  which satisfy the following conditions:

		\begin{align}
			\begin{aligned}\label{eq:hp_f}
				&f\in C\left([0,T],L^2(\mathbb R_+)\right) \cap L^2\left([0,T],W^{1,2}(\mathbb R_+)\right);\\
				& \|f\|_{C\left([0,T],L^2(\mathbb R_+)\right) }^2+\|\partial_x f\|_{L^2\left([0,T]\times\mathbb R_+\right)}^2\le K, \quad \mbox{for some }K>0;\\
				&\|f\|_{L^\infty\left([0,T]\times\mathbb R_+\right)}\le \eta;\\
				&f(t,0)=\psi(t)\quad \text{for a.e. }t\in [0,T];\\
				&f\ge 0.
			\end{aligned}
		\end{align}
	\end{assumption}

	\medskip
			
	Let us  consider the following linear PDE-ODE system for $(s,g)$, given a positive Borel function $f$ satisfying Assumption \ref{hp_f}
		\begin{align}
			\partial_t (\varphi(g)s) &= \partial_x(\varphi(g)\partial_xs) -\lambda \varphi(g)sg,\label{eq:lin_PDE_sg_s}\\
			\partial_t g &= -\lambda \varphi(g)fg.\label{eq:lin_PDE_sg_g}
		\end{align}
	In \eqref{eq:lin_PDE_sg_s}-\eqref{eq:lin_PDE_sg_g}	the function $\varphi$ is given by \eqref{eq:phi_c_AB}, the boundary condition   $\psi$is \eqref{eq:stoc_nostro_boundary_cond} and the initial conditions are, for any $x \in (0,\infty)$,
		\begin{align*} 
			s(x,0)&= s_0(x), \qquad  \qquad
			g(x,0)= c_0(x),
		\end{align*}
	where $s_0,c_0$ and $\psi$ satisfy Assumption \ref{hp:main_result}.
		Equation \eqref{eq:lin_PDE_sg_s} for $s$ reads as
		\begin{align}\label{eq:lin_PDE_s_mild}
		\partial_t s = \partial_x^2 s +b_g \partial_xs + \tilde\gamma_g    s,
	\end{align} 
		where $b_g$ is as in \eqref{eq:beta2} and  $\tilde\gamma_g$ is given by \begin{equation}
	\label{def:tilde_gamma}\widetilde{\gamma}_{g} =  -\lambda (1-Bf) g.
\end{equation}.	Note that the solution $g$ of equation  \eqref{eq:lin_PDE_sg_g} admits an  explicit form, that is  
		\begin{align}
			g(x,t) = \displaystyle\frac{c_0(x)}{\varphi(c_0(x))e^{\lambda  \int_0^t f(x,\tau)d\tau} -Bc_0(x)}.\label{eq:g_explicit}
		\end{align}
	
	We seek a solution $(s,g)$ which is a mild solution.
		\begin{definition}[Mild solution for $(s,g)$]%\label{def:mild_solution_s_g}
		The couple $(s,g)$, with $s\in L^\infty([0,T],W^{1,2}(\mathbb R_+))\cap L^\infty\left([0,T]\times \mathbb R_+\right)$ and $g\in B_{b}\left([0,T]\times \mathbb R_+\right)$, the space of bounded Borel functions, is \emph{a mild solution} of  PDE system \eqref{eq:lin_PDE_sg_s}-\eqref{eq:lin_PDE_sg_g} if, for every $x \in \mathbb R_+$, $g(\cdot,x)$ solves \eqref{eq:lin_PDE_sg_g}, that is $g$ is given by \eqref{eq:g_explicit} and $s$ is a $L^\infty([0,T],W^{1,2}(\mathbb R_+))$ mild solution of \eqref{eq:lin_PDE_s_mild}. 
	\end{definition}

	\begin{remark}	If $s$ is a mild solution of equation \eqref{eq:lin_PDE_s_mild} and  $s \in C^{1,2}\left( [0,T]\times \mathbb R_+\right)$, the space of function $C^1$ in time and $C^2$ in space, then $s$ satisfies \eqref{eq:lin_PDE_sg_s} for every $(t,x)\in (0,T)\times (0,\infty)$; we say in this case that $s$ is a classical solution of equation \eqref{eq:lin_PDE_sg_s}.
		\end{remark}
		As in Section \ref{sec:uncoupled_equation}, we split the linear solution $s$  of \eqref{eq:lin_PDE_s_mild} as   $s=\tilde{u}+\tilde{v}$, where $\tilde{u}$ solves the heat equation \eqref{eq:u}-\eqref{eq:u_boundary_condition} and $\tilde{v}$ is a $L^\infty([0,T],W^{1,2}(\mathbb R_+))$ mild solution of the equation \eqref{eq:v_lin} with $b=b_g$ and $\tilde\gamma=\tilde\gamma_g$.
		
		\medskip
		
		First of all, we consider some regularity and stability properties for the function $g$ which has an explicit formula given by \eqref{eq:g_explicit}, given the function $f$ satisfying Assumption \ref{hp_f}.
		
		%In the following, we take $0<\alpha<1$; this value will only play a role in the approximation of the solution $s$ with regular solutions, but not in the main bounds or results.
		
		\begin{definition}[Good Data]\label{hp:good_data}
			We say that $s_0,c_0,f,\psi$ are \emph{good data} if they satisfy Assumption \ref{hp:main_result} and \ref{hp_f} and if the following regularities are satisfied: there exists an   $0<\alpha<1$ such that $f\in C^{1+\alpha/2,2+\alpha}([0,T]\times [0,+\infty))$, $s_0\in C^{2+\alpha}([0,+\infty))$, $c_0 \in C^{1+\alpha}([0,+\infty))$, $\psi\in C^{1+\alpha/2}([0,T])$ and  
			\begin{align*}
				\dot{\psi}(0) = \partial_x^2 s_0(0)+\frac{\partial_x c_0(0) \partial_x s_0(0)}{\varphi(c_0(0))} +\lambda c_0(0)s_0(0)(f(0,0)-1).
			\end{align*}
		\end{definition}
 
	We have introduced the  definition of good data since  the results in   \cite{2005_GN_NLA} and in \cite{2008_GN_CPDE} are obtained under such  assumptions of good data and then  extended    to a more general setting,  by density arguments. Furthermore, in the case of good data, we get a higher regularity of the solutions, which is useful in some proofs.

		\medskip
		
	The following proposition proposes time uniform bounds for $g$ and its spatial derivative.
		
		\begin{proposition}\label{prop:bd_g}
			Let us assume that $f$ satisfies Assumption  \ref{hp_f}  and that $c_0$ satisfies Assumption \ref{hp:main_result}. Then $0\le g\le C_0$ and 
			\begin{equation}\label{eq:g_continousL2}
				C_0-g\in C([0,T],L^2(\mathbb R_+)).
			\end{equation} Moreover there exists $\kappa>0$, depending on $m$, $C_0$, $\varphi_m$, $\lambda$, $\eta$, $T_{fin}$, but not on $K$, such that, for every $T\in [0,T_{fin}]$, for every $f$ satisfying Assumption  \ref{hp_f}, the following bounds hold   
			\begin{align}
				&\sup_{t\in[0,T]}\|C_0-g\|_{L^2(\mathbb R_+)} \le \kappa \|C_0-c_0\|_{L^2(\mathbb R_+)} +\kappa \|f\|_{C([0,T],L^2(\mathbb R_+))},\label{eq:bound_nonlinear_linear1}\\
				&\sup_{t\in[0,T]}\|\partial_x g\|_{L^2(\mathbb R_+)}^2 \le \kappa \|\partial_x c_0\|_{L^2(\mathbb R_+)}^2 +\kappa \|\partial_x f\|_{L^2([0,T]\times\mathbb R_+)}^2\label{eq:bound_nonlinear_linear2}.
			\end{align}
		\end{proposition}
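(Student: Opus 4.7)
The plan is to exploit the explicit formula \eqref{eq:g_explicit} for $g$ and the underlying ODE \eqref{eq:lin_PDE_sg_g}, treating everything pointwise in $x$. Write $F(x,t):=\int_0^t f(x,\tau)\,d\tau$ and $D(x,t):=\varphi(c_0(x))e^{\lambda F(x,t)}-Bc_0(x)$, so that $g=c_0/D$. Since $\varphi(c_0)-Bc_0=1+Bc_0-Bc_0=1$ and $e^{\lambda F}\ge 1$ (as $f\ge 0$), one checks directly that $D\ge 1$ in both cases $B=\pm1$; hence $g\ge 0$ and $g\le c_0\le C_0$. Because $F\le \eta T_{fin}$ pointwise by Assumption \ref{hp_f}, all factors $\varphi(c_0)$, $e^{\lambda F}$, $D$, $1/D$ are uniformly bounded above and below by constants depending only on $\varphi_m$, $C_0$, $\lambda$, $\eta$, $T_{fin}$.

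For the $L^2$ bound \eqref{eq:bound_nonlinear_linear1}, I would split $C_0-g=(C_0-c_0)+(c_0-g)$ and note the algebraic identity
\begin{equation*}
c_0-g\;=\;c_0\cdot\frac{\varphi(c_0)\bigl(e^{\lambda F}-1\bigr)}{D},
\end{equation*}
which follows from $\varphi(c_0)-Bc_0=1$. Using the elementary bound $e^{\lambda F}-1\le \lambda F\, e^{\lambda \eta T_{fin}}$ valid on $F\in[0,\eta T_{fin}]$, together with $D\ge 1$ and $c_0\varphi(c_0)\le C_0(1+C_0)$, we get $|c_0-g|\le \kappa\int_0^t f(x,\tau)\,d\tau$ pointwise. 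Minkowski's integral inequality then yields $\|c_0-g\|_{L^2_x}\le \kappa T\,\|f\|_{C([0,T],L^2_x)}$, which combined with $\|C_0-c_0\|_{L^2_x}$ gives \eqref{eq:bound_nonlinear_linear1} uniformly in $t$. Continuity \eqref{eq:g_continousL2} is obtained in the same spirit: from $\partial_t g=-\lambda\varphi(g)fg$ and the uniform bounds on $\varphi(g),g$, we get $|(C_0-g)(x,t_2)-(C_0-g)(x,t_1)|\le \kappa\int_{t_1}^{t_2}|f(x,\tau)|\,d\tau$, and Minkowski again gives $\|(C_0-g)_{t_2}-(C_0-g)_{t_1}\|_{L^2_x}\to 0$ as $t_2-t_1\to 0$.

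For the gradient bound \eqref{eq:bound_nonlinear_linear2}, differentiate the explicit formula:
\begin{equation*}
\partial_x g \;=\; \frac{\partial_x c_0}{D}\;-\;\frac{c_0}{D^2}\Bigl[B\,\partial_x c_0\,(e^{\lambda F}-1)\;+\;\lambda\,\varphi(c_0)e^{\lambda F}\,\partial_x F\Bigr],
\end{equation*}
where $\partial_x F(x,t)=\int_0^t \partial_x f(x,\tau)\,d\tau$. All the prefactors not involving $\partial_x c_0$ or $\partial_x F$ are uniformly bounded by a constant $\kappa$ depending only on $\varphi_m,C_0,\lambda,\eta,T_{fin}$, so $|\partial_x g|\le \kappa(|\partial_x c_0|+|\partial_x F|)$. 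Squaring, applying Cauchy--Schwarz to the time integral yields $|\partial_x F(x,t)|^2\le T_{fin}\int_0^T|\partial_x f(x,\tau)|^2\,d\tau$, and integrating in $x$ gives $\|\partial_x F_t\|_{L^2_x}^2\le T_{fin}\,\|\partial_x f\|_{L^2([0,T]\times\mathbb R_+)}^2$. Summing the two contributions and taking the sup in $t\in[0,T]$ produces \eqref{eq:bound_nonlinear_linear2}.

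There is no real conceptual obstacle: everything reduces to estimating the explicit expression $g=c_0/D$ and its $x$-derivative under the uniform two-sided bounds on $D$. The only mild care needed is in verifying the uniform lower bound $D\ge 1$ in the two cases $B=\pm1$ (using Remark \ref{rmk:pos_varphi} to ensure $\varphi_m>0$ when $B=-1$), and in checking that the constant $\kappa$ thus obtained depends only on the parameters listed in the statement and not on the bound $K$ on $f$.
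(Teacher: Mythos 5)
Your proof is correct, but it takes a genuinely different route from the paper's. The paper proves the continuity \eqref{eq:g_continousL2} directly (with essentially the same estimate you give, via the difference of exponentials rather than the ODE), but for the two quantitative bounds \eqref{eq:bound_nonlinear_linear1}--\eqref{eq:bound_nonlinear_linear2} it does not compute anything: it invokes Proposition 2.1 of \cite{2008_GN_CPDE} and Proposition 3.1 of \cite{2005_GN_NLA}, valid for good data in the sense of Definition \ref{hp:good_data}, and then extends to general $f$, $c_0$ by a density argument. You instead work entirely from the explicit formula \eqref{eq:g_explicit}, using the identity $\varphi(c_0)-Bc_0=1$ to get the uniform lower bound $D\ge 1$ on the denominator and then estimating $c_0-g$ and $\partial_x g$ by hand. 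Your computations check out (in particular $c_0-g=c_0\varphi(c_0)(e^{\lambda F}-1)/D$ and the formula for $\partial_x g$ are correct, and the resulting $\kappa$ visibly depends only on $\varphi_m$, $C_0$, $\lambda$, $\eta$, $T_{fin}$ and not on $K$), so your version is self-contained and makes the $K$-independence of the constant transparent, whereas the paper's is shorter but outsources the main estimates. The one step you should make explicit is the justification of the $x$-differentiation of \eqref{eq:g_explicit}: $f$ is only in $L^2([0,T],W^{1,2}(\mathbb R_+))$ and $c_0$ only in $W^{1,2}$, so $\partial_x F$ and $\partial_x c_0$ are weak derivatives and the chain rule must be applied in the Sobolev sense (legitimate here because $g$ is a $C^1$ function of $(c_0,F)$ with uniformly bounded partial derivatives on the relevant range, or alternatively by mollifying and passing to the limit, which is in effect the paper's density argument). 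This is a routine point, not a gap.
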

	
	\begin{proof}

		 As far as condition \eqref{eq:g_continousL2} concerns, we first note that by \eqref{eq:phi_c_AB} and  by Assumptions \ref{hp:main_result} on $c_0$ and $\varphi$, the denominator in \eqref{eq:g_explicit} is bounded from below, hence $g$ is bounded from above. Since $f$ satisfies \eqref{eq:hp_f}, we have, for any $0\le s<t\le T$,  
		\begin{align*}
			|g(t,x)-g(s,x)|&\le C\left|e^{\lambda\int_0^t f(r,x)dr} -e^{\lambda\int_0^s f(r,x)dr}\right|\\
			&\le C\int_s^t f(r,x)dr \cdot \max\{e^{\lambda\int_0^t f(r,x)dr}, e^{\lambda\int_0^s f(r,x)dr}\} \\
			&\le C\int_s^t f(r,x)dr.
		\end{align*}  The constant $C$, changing from line to line depends only upon $f$. We get
		\begin{align*}
			\|g(t,\cdot)-g(s,\cdot)\|_{L^2(\mathbb R_+)}^2 \le C\int_0^\infty \left(C\int_s^t f(r,x)dr\right)^2 dx \le (t-s)\|f\|_{L^2_{[0,T]\times(\mathbb R_+)}},
		\end{align*}
		in particular  $C_0-g$ is in $C([0,T],L^2(\mathbb R_+))$.
			\smallskip
		
			If $f$ and $c_0$ which are good data, in the sense of Definition \ref{hp:good_data}, inequalities \eqref{eq:bound_nonlinear_linear1} and \eqref{eq:bound_nonlinear_linear2} hold true, as shown in   \cite{2008_GN_CPDE}(Proposition 2.1) and \cite{2005_GN_NLA}(Proposition 3.1), respectively. These bounds can be extended to the general case by a density argument. 
			 The proof is complete.
	\end{proof}
		It is possible to establish also some stability results for $g$.
		\begin{proposition}\label{prop:approx_g}
			Let us suppose that $f_1,f_2$ satisfy Assumption  \ref{hp_f}  and that $c_{0,1},c_{0,2}$ satisfy Assumption \ref{hp:main_result}; let $g_1,g_2$ be the corresponding solutions to \eqref{eq:lin_PDE_sg_g}. Then we have, for some $\tilde{K}>0$ (depending on $m$, $C_0$, $\varphi_m$, $\lambda$, $\eta$, $T_{fin}$, $K$), for every $T\in [0,T_{fin}]$,  
			\begin{equation}
            \begin{split}
				\sup_{t\in[0,T]}\|g_1-g_2\|_{L^\infty(\mathbb R_+)} &\le \tilde{K}T\|f_1-f_2\|_{C([0,T],L^2(\mathbb R_+))\cap L^2([0,T],W^{1,2}(\mathbb R_+))} +\tilde{K} \|c_{0,1}-c_{0,2}\|_{L^\infty(\mathbb R_+)},%\label{eq:stability_Linfty_g}
                \\
				\sup_{t\in[0,T]}\|g_1-g_2\|_{W^{1,2}(\mathbb R_+)} &\le  \tilde{K}T\|f_1-f_2\|_{C([0,T],L^2(\mathbb R_+))\cap L^2([0,T],W^{1,2}(\mathbb R_+))}\label{eq:stability_W12_g}\\
				& +\tilde{K} (1+\|\partial_x c_{0,1}\|_{L^2(\mathbb R_+)}) \|c_{0,1}-c_{0,2}\|_{W^{1,2}(\mathbb R_+)}. 
			  \end{split}
              \end{equation}
		\end{proposition}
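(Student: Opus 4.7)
The plan is to exploit the fact that \eqref{eq:lin_PDE_sg_g} is, pointwise in $x$, a scalar ODE in $t$ with bounded coefficients, so the difference $w:=g_1-g_2$ satisfies a linear ODE to which Gronwall's inequality applies directly. Using the identity $\varphi(g_1)-\varphi(g_2) = B(g_1-g_2)$, which follows from $\varphi(c) = 1+Bc$, I would subtract the equations for $g_1$ and $g_2$ and obtain
\begin{equation*}
\partial_t w = -\lambda\bigl[B f_1 g_1 + \varphi(g_2) f_2\bigr] w \;-\; \lambda\, \varphi(g_2)\, g_1\,(f_1-f_2),
\end{equation*}
with $w(0,x) = c_{0,1}(x)-c_{0,2}(x)$. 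By Proposition \ref{prop:bd_g} and Assumption \ref{hp_f}, the coefficient in front of $w$ and the factor $\varphi(g_2)g_1$ are bounded uniformly in $(t,x)$ by a constant depending only on $\eta,C_0,\lambda$. Gronwall's inequality in $t$, applied pointwise in $x$, then yields
\begin{equation*}
|w(t,x)| \le e^{Mt}\,|c_{0,1}(x)-c_{0,2}(x)| + C\int_0^t |(f_1-f_2)(s,x)|\,ds.
\end{equation*}

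For the first bound in \eqref{eq:stability_W12_g}, I would take the $L^\infty_x$ norm of the pointwise estimate above, apply the one-dimensional Sobolev embedding $W^{1,2}(\mathbb R_+)\hookrightarrow L^\infty(\mathbb R_+)$ to the time-integrand $(f_1-f_2)(s,\cdot)$, and then Cauchy--Schwarz in $s$, turning $\int_0^t\|(f_1-f_2)(s,\cdot)\|_{L^\infty}\,ds$ into a multiple of $\|f_1-f_2\|_{C([0,T],L^2)\cap L^2([0,T],W^{1,2})}$, with an overall factor absorbable into $\tilde K T$ because $T\le T_{fin}$.

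For the $W^{1,2}$ bound, I would differentiate the ODE in $x$ to obtain a linear equation for $\partial_x w$ with the same coefficient $-\lambda[Bf_1g_1+\varphi(g_2)f_2]$ in front of $\partial_x w$, plus a forcing containing terms such as $\partial_x(\varphi(g_2)f_2)\,w$, $\partial_x\varphi(g_2)(f_1-f_2)g_1$, $\varphi(g_2)\partial_x(f_1-f_2)g_1$ and $\varphi(g_2)(f_1-f_2)\partial_xg_1$, together with analogous contributions involving $\partial_x g_1$ and $\partial_x f_1$. Integrating in $t$ and taking the $L^2_x$ norm via Minkowski's integral inequality, I would bound each forcing term by placing the uniformly bounded factors in $L^\infty_{t,x}$ (using Assumption \ref{hp_f}, Proposition \ref{prop:bd_g}, and the $L^\infty$ control on $w$ already established) and keeping the singular factors $\partial_x g_i,\partial_x f_i,\partial_x(c_{0,1}-c_{0,2})$ in $L^2_x$. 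Crucially, the $L^2_x$ bound on $\partial_x g_1$ from \eqref{eq:bound_nonlinear_linear2}, which depends on $\|\partial_x c_{0,1}\|_{L^2}$ and on $\|\partial_x f_1\|_{L^2_{t,x}}\le\sqrt{K}$, is exactly what produces the factor $(1+\|\partial_x c_{0,1}\|_{L^2})$ appearing on the right-hand side. A final Gronwall in $t$ applied to $\|\partial_x w(t,\cdot)\|_{L^2}+\|w(t,\cdot)\|_{L^2}$ closes the estimate.

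The main obstacle I anticipate is the bookkeeping in this last step: the forcing for $\partial_x w$ contains several bilinear terms in which $\partial_x g_1$, being only in $L^2_x$, must be paired with a factor bounded in $L^\infty_x$, and this pairing has to be done without losing the Hölder regularity of $\psi$ or the $L^2$-in-time regularity of $\partial_x f_i$. Once one systematically assigns each factor to $L^\infty_x$ or $L^2_x$ using the already-established $L^\infty$ bound on $w$, the Sobolev embedding $W^{1,2}\hookrightarrow L^\infty$ on $f_1-f_2$, and Cauchy--Schwarz in $s$ for the $\partial_x f_i$ contributions, all coefficients are controlled in terms of the data and the stated inequality follows.
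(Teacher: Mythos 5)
Your proposal is correct in substance but follows a genuinely different route from the paper. The paper first reduces to two separate perturbations (first $c_{0,1}=c_{0,2}$ with $f_1\neq f_2$, then $f_1=f_2$ with $c_{0,1}\neq c_{0,2}$): the $f$-perturbation is handled by citing the stability results of Guarguaglini--Natalini for good data together with a density argument, while the $c_0$-perturbation is treated by differentiating the closed-form solution $g(t,x)=\rho(t,x,c_0(x))$, $\rho(t,x,c)=c/(\varphi(c)h(t,x)-Bc)$, and bounding $\partial_c\rho$, $\partial_x\partial_c\rho$, $\partial_c^2\rho$ uniformly in $c\in[0,C_0]$. You instead write the pointwise-in-$x$ ODE for the difference $w=g_1-g_2$ (your algebraic identity for $\partial_t w$ checks out, using $\varphi(g_1)-\varphi(g_2)=Bw$), apply Gronwall, and then differentiate the ODE in $x$; this is self-contained and avoids both the external citations and the good-data/density detour for this step, at the price of the $x$-differentiation bookkeeping you describe (which does close, since every $L^2_x$-only factor $\partial_xg_i,\partial_xf_i$ can be paired with an $L^\infty_{t,x}$ factor exactly as you indicate, and $\|\partial_xg_1\|_{L^\infty_t(L^2_x)}$ is controlled by \eqref{eq:bound_nonlinear_linear2}). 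Two cosmetic discrepancies with the stated inequality are worth flagging: (i) after Cauchy--Schwarz in time, $\int_0^T\|f_1-f_2\|_{L^\infty_x}\,dt$ and $\int_0^T\|\partial_x(f_1-f_2)\|_{L^2_x}\,dt$ produce a factor $\sqrt{T}$, not $T$, so you obtain $\tilde K\sqrt{T}$ rather than $\tilde K T$ in front of the $f$-difference; (ii) because you treat both perturbations at once, cross terms such as $f_1\partial_xg_1\,w$ make the coefficient of $\|f_1-f_2\|$ inherit a dependence on $\|\partial_xc_{0,1}\|_{L^2}$, whereas in the statement only the coefficient of $\|c_{0,1}-c_{0,2}\|_{W^{1,2}}$ carries that factor. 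Neither affects the only use made of this proposition (convergence of $g^n$ to $g$ in Proposition \ref{prop:wellposed_sg_approx}), but if you want the bound exactly as stated you should, like the paper, separate the two perturbations by the triangle inequality before estimating.
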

		
		\begin{proof}
			It is enough to show the above bounds separately in the case when $c_{0,1}=c_{0,2}$ and in the case when $f_1=f_2$. In the case when $c_{0,1}=c_{0,2}$, the bound follows from \cite{2008_GN_CPDE}(Proposition 2.5) and \cite{2005_GN_NLA}(Proposition 3.4) for $f_1,f_2$ and $c_{0,1}$ good data and a density argument for general $f_1,f_2$ and $c_{0,1}=c_{0,2}$. We consider now the case when $f_1=f_2=:f$. In the following, $C>0$ may denote different constants (depending on $m$, $C_0$, $\varphi_m$, $\lambda$, $T$, $\eta$, $K$). We write, for any solution $g$ to \eqref{eq:lin_PDE_sg_g},
			\begin{align*}
				g(t,x) = \rho(t,x,c_0(x)), \quad \rho(t,x,c)= \frac{c}{\varphi(c)h(t,x)-Bc},\quad h(t,x)=e^{\lambda \int_0^t f(r,x)dr}. 
			\end{align*}
			In view of bounding $g_1-g_2$ and $\partial_x g_1 -\partial_x g_2$, we compute (recall $\varphi(c)=A+Bc$, $B=\pm1$.
			\begin{align*}
				\partial_c\rho(t,x,c) &= \frac{1}{\varphi(c)h(t,x)-Bc} -\frac{(B(h(t,x)-1)c}{(\varphi(c)h(t,x)-Bc)^2},\\
				\partial_x\rho(t,x,c) &= -\frac{c\varphi(c)\partial_x h(t,x)}{(\varphi(c)h(t,x)-Bc)^2},\\
				\partial_c\partial_x\rho(t,x,c) &= \partial_x h(t,x)\left(-\frac{\varphi(c)+Bc}{(\varphi(c)h(t,x)-Bc)^2} +\frac{2c\varphi(c)B(h(t,x)-1)}{(\varphi(c)h(t,x)-Bc)^3}\right),\\
				\partial_c^2\rho(t,x,c) &= -\frac{2B(h(t,x)-1)}{(\varphi(c)h(t,x)-Bc)^2} +\frac{2c(h(t,x)-1)^2}{(\varphi(c)h(t,x)-Bc)^3}.
			\end{align*}
			Since $f$ safisfies \eqref{eq:hp_f}, we have (with a constant $C>0$ independent of $(t,x)$)
			\begin{align*}
				\sup_{c\in [0,C_0]}|\partial_c\rho(t,x,c)|&\le C,\\
				\sup_{c\in [0,C_0]}|\partial_c\partial_x\rho(t,x,c)|&\le C\int_0^t|\partial_x f(r,x)|dr,\\
				\sup_{c\in [0,C_0]}|\partial_c^2\rho(t,x,c)|&\le C.
			\end{align*}
			Now we can bound $g_1-g_2$: since $0\le c_{0,i}\le C_0$, $i=1,2$, we have
			\begin{align*}
				|g_1(t,x)-g_2(t,x)|\le \sup_{c\in [0,C_0]}|\partial_c\rho(t,x,c)||c_{0,1}(x)-c_{0,2}(x)|\le C|c_{0,1}(x)-c_{0,2}(x)|,
			\end{align*}
			which implies, for every $2\le q\le\infty$,
			\begin{align*}
				\sup_{t\in[0,T]}\|g_1-g_2\|_{L^q_x} \le C\|c_{0,1}-c_{0,2}\|_{L^q_x}.
			\end{align*}
			Concerning $\partial_x g_1 -\partial_x g_2$, we have, for $i=1,2$,
			\begin{align*}
				\partial_x g_i(t,x) = \partial_x \rho(t,x,c_{0,i}(x)) +\partial_c \rho(t,x,c_{0,i}(x))\partial_x c_{0,i}(x)
			\end{align*}
			and hence
			\begin{align*}
				|\partial_x g_1(t,x)-\partial_x g_2(t,x)| &\le |\partial_x \rho(t,x,c_{0,1}(x)) -\partial_x \rho(t,x,c_{0,2}(x))|\\
				&\quad +|\partial_c \rho(t,x,c_{0,1}(x)) -\partial_c \rho(t,x,c_{0,2}(x))||\partial_x c_{0,1}(x)|\\
				&\quad +|\partial_c \rho(t,x,c_{0,2}(x))||\partial_x c_{0,1}(x) -\partial_x c_{0,2}(x)|\\
				&\le \sup_{c\in [0,C_0]}|\partial_c\partial_x\rho(t,x,c)||c_{0,1}(x)-c_{0,2}(x)|\\
				&\quad +\sup_{c\in [0,C_0]}|\partial_c^2\rho(t,x,c)||c_{0,1}(x)-c_{0,2}(x)||\partial_x c_{0,1}(x)|\\
				&\quad +\sup_{c\in [0,C_0]}|\partial_c\rho(t,x,c)||\partial_x c_{0,1}(x) -\partial_x c_{0,2}(x)|\\
				&\le C\left(\int_0^T|\partial_x f(r,x)|dr+|\partial_x c_{0,1}(x)|\right) |c_{0,1}(x)-c_{0,2}(x)|\\
				&\quad +C|\partial_x c_{0,1}(x) -\partial_x c_{0,2}(x)|.
			\end{align*}
			Therefore we obtain (by Assumption \ref{hp_f} on $f$)
			\begin{align*}
				\sup_{t\in[0,T]}\|\partial_x g_1-\partial_x g_2\|_{L^2_x}^2 &\le C(\|\partial_x f\|_{L^2_t(L^2_x)}^2 +\|\partial_x c_{0,1}\|_{L^2_x}^2)\|c_{0,1}-c_{0,2}\|_{L^\infty_x}^2 +C\|\partial_x c_{0,1}-\partial_x c_{0,2}\|_{L^2_x}^2\\
				&\le C(1 +\|\partial_x c_{0,1}\|_{L^2_x}^2)\|c_{0,1}-c_{0,2}\|_{W^{1,2}_x}^2,
			\end{align*}
			where in the last line we have used the Sobolev embedding of $W^{1,2}_x$ into $L^\infty_x$. The proof is complete.
		\end{proof}

		Next we show that any function $s$, mild solution  of the linear equation  \eqref{eq:lin_PDE_s_mild}, exists in $[0,T]$, it is pathwise unique and it can be approximated by a sequence of solutions of the same system given  good data approximating the initial conditions, the boundary conditions and the function $f$. The latter property is useful to apply the results proven in \cite{2005_GN_NLA,2008_GN_CPDE}.
		
		\begin{proposition}\label{prop:wellposed_sg_approx}
			Let us consider equation  \eqref{eq:lin_PDE_sg_g}, where  $g$ is the solution of the equation  \eqref{eq:lin_PDE_sg_g}, with  $f$ satisfying Assumption \ref{hp_f}  and with Assumption \ref{hp:main_result} in place.
			\begin{itemize}
				\item[i)]There exists a pathwise unique $L^\infty([0,T],W^{1,2}(\mathbb R_+))$ mild solution $s$ of the equation \eqref{eq:lin_PDE_s_mild}. 
				\item[ii)] If $s_0,c_0,f,\psi$ are good data as in Definition \ref{hp:good_data}, then the corresponding solution $s\in C^{1,2}\left([0,T]\times \mathbb R_+\right)$;  hence, it is a classical solution to \eqref{eq:lin_PDE_sg_s}. 
				\item[iii)]			
			For any mild solution $s$ of the equation \eqref{eq:lin_PDE_s_mild}, there exists a sequence of good data $s_0^n,c_0^n,f^n,\psi^n$, with $f^n$ converging to $f$ in $C([0,T],L^2(\mathbb R_+))\cap L^2([0,T],W^{1,2}(\mathbb R_+))$, with corresponding classical solutions $s^n$ of the equation \eqref{eq:lin_PDE_s_mild}, such that $(s^n)_n$ converges to $s$ in $L^\infty([0,T],W^{1,2}(\mathbb R_+))$, more precisely
			\begin{align}
				\lim\limits_{n\rightarrow \infty} \sup_{t\in [0,T]}\|s^n(t,\cdot)-s(t,\cdot)\|_{W^{1,2}(\mathbb R_+)} =0.\label{eq:approx_good_data}
			\end{align}
			In particular, $s^n$ converges to $s$ uniformly on $[0,T]\times(0,\infty)$.
			\end{itemize}
		\end{proposition}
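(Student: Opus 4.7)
The plan is to treat the three parts in order, leveraging the work already done in Sections 3 and 4.

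For part (i), note that $g$ is explicitly given by \eqref{eq:g_explicit} and Proposition \ref{prop:bd_g} yields $0\le g\le C_0$ together with $\sup_t \|\partial_x g\|_{L^2(\mathbb R_+)}<\infty$. Combined with the lower bound $\varphi(g)\ge \varphi_m>0$ from Remark \ref{rmk:pos_varphi} and the $L^\infty$ bound on $f$, this gives
\begin{align*}
    b_g=\frac{B\partial_x g}{1+Bg}\in L^\infty([0,T],L^2(\mathbb R_+)),\qquad \widetilde{\gamma}_g=-\lambda(1-Bf)g\in L^\infty([0,T]\times\mathbb R_+).
\end{align*}
Hence Assumption \ref{hp:linearPDE} is satisfied, and since $\beta>1/4$ we may choose $p=2<1/(1-2\beta)$. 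Corollary \ref{cor:wellposed_s} then yields existence and pathwise uniqueness of an $L^\infty([0,T],W^{1,2}(\mathbb R_+))$ mild solution of \eqref{eq:lin_PDE_s_mild}.

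For part (ii), under the good data regularity in Definition \ref{hp:good_data} the coefficients $b_g,\widetilde{\gamma}_g$ are $C^{\alpha/2,\alpha}$ and the boundary datum $\psi$ is $C^{1+\alpha/2}$; the prescribed compatibility condition $\dot\psi(0)=\partial_x^2 s_0(0)+\partial_x c_0(0)\partial_x s_0(0)/\varphi(c_0(0))+\lambda c_0(0)s_0(0)(f(0,0)-1)$ is exactly the first-order Schauder compatibility at the corner $(t,x)=(0,0)$. Invoking classical parabolic Schauder theory for linear problems with Dirichlet boundary condition on the half-line (as used in \cite{2005_GN_NLA,2008_GN_CPDE}) we obtain $s\in C^{1+\alpha/2,2+\alpha}([0,T]\times[0,\infty))$, and by uniqueness this classical solution agrees with the mild one.

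For part (iii), the strategy is a standard approximation. Given $s_0,c_0,f,\psi$ satisfying Assumptions \ref{hp:main_result} and \ref{hp_f}, construct approximating good data $s_0^n,c_0^n,f^n,\psi^n$ by mollification in time and space combined with a cut-off to enforce the sign and bound constraints; a small linear correction near $t=0$ and $x=0$ restores the compatibility condition of Definition \ref{hp:good_data} without spoiling the convergence. Pick them so that $\psi^n\to\psi$ in $C^\beta([0,T])$, $c_0^n\to c_0$ and $s_0^n\to s_0$ in $W^{1,2}(\mathbb R_+)$, and $f^n\to f$ in $C([0,T],L^2(\mathbb R_+))\cap L^2([0,T],W^{1,2}(\mathbb R_+))$. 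By part (ii) the corresponding solutions $s^n$ are classical, and we split $s^n=\tilde{u}^n+\tilde{v}^n$ as in the discussion above Proposition \ref{prop:wellposed_sg_approx}. Proposition \ref{prop:approx_g} gives $g^n\to g$ in $L^\infty([0,T],W^{1,2}(\mathbb R_+))\cap L^\infty([0,T]\times\mathbb R_+)$, which in turn yields $b_{g^n}\to b_g$ in $L^\infty([0,T],L^2(\mathbb R_+))$ and $\widetilde\gamma_{g^n}\to\widetilde\gamma_g$ in $C([0,T],L^2(\mathbb R_+))$. The stability bounds of Proposition \ref{prop:stability_lin} applied with these data then give
\begin{align*}
    \sup_{t\in[0,T]}\|\tilde{u}^n-\tilde{u}\|_{W^{1,2}(\mathbb R_+)}+\sup_{t\in[0,T]}\|\tilde{v}^n-\tilde{v}\|_{W^{1,2}(\mathbb R_+)}\longrightarrow 0,
\end{align*}
proving \eqref{eq:approx_good_data}. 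Uniform convergence on $[0,T]\times(0,\infty)$ follows from the Sobolev embedding $W^{1,2}(\mathbb R_+)\hookrightarrow L^\infty(\mathbb R_+)$ together with the time-continuity noted in Remark \ref{rmk:time_continuity_lin}.

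The main obstacle I anticipate is part (iii): one must design the approximating good data so that simultaneously all of the qualitative constraints (nonnegativity, the bound by $\eta$, the matching $f^n(t,0)=\psi^n(t)$, and the corner compatibility condition) are preserved while producing convergence in the correct norms. The constraints are mutually competing, but since the offending corrections can be localized near $t=0$ and $x=0$ they can be made arbitrarily small in the relevant norms; part (ii) and the stability estimates then do the rest.
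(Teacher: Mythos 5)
Your proposal follows essentially the same route as the paper: part (i) via Corollary \ref{cor:wellposed_s} with $p=2$ after checking Assumption \ref{hp:linearPDE} through the bounds on $g$; part (ii) via the classical Schauder well-posedness for good data (the paper simply cites \cite{2008_GN_CPDE}, Proposition 2.2) plus identification with the mild solution through uniqueness in $L^\infty_t(W^{1,\infty}_x)$; part (iii) via approximation by good data, stability of $g$ (Proposition \ref{prop:approx_g}) and the stability bound of Proposition \ref{prop:stability_lin}, with uniform convergence from the Sobolev embedding.

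One small but genuine inaccuracy in part (iii): you ask for $\psi^n\to\psi$ in $C^\beta([0,T])$ obtained by mollification. Smooth (or $C^{1+\alpha/2}$) functions are \emph{not} dense in $C^\beta$ with respect to the $C^\beta$ norm — their closure is the little H\"older space — so for a generic $\beta$-H\"older path $\psi$ such a sequence need not exist. The paper avoids this by requiring only $\psi^n\to\psi$ in $C^{\beta-\epsilon}([0,T])$ for a small $\epsilon>0$ with $\beta-\epsilon>1/4$, which is achievable by mollification and still suffices for all the estimates (Proposition \ref{prop:heat_bd} and Proposition \ref{prop:stability_lin} only need a H\"older exponent strictly above $1/4$). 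Your argument goes through verbatim once you make this replacement.
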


		\begin{proof}
			Condition i) is a consequence of Corollary \eqref{cor:wellposed_s}, applied with $p=2$, and inequality \eqref{eq:bound_nonlinear_linear1}. Moreover, if $\psi$ is in $C^1$, then well-posedness holds also in $L^\infty_t(W^{1,\infty}_x)$, by Remark \ref{rmk:wellposed_lin_infty}. 
			
			\medskip
			
			The regularity of the solution, stated in condition  ii) in the case of good data, i.e. well-posedness of classical solutions in the class $C^{1+\alpha/2,2+\alpha}_{t,x}$ is given in \cite{2008_GN_CPDE}(Proposition 2.2). Since the classical solution is also in $L^\infty_t(W^{1,\infty}_x)$, it must coincide with the mild solution. 
			
			\medskip
			Let us prove the approximating result iii). Let $s$ be   a mild solution of the equation \eqref{eq:lin_PDE_s_mild}. We take a sequence of good data $s_0^n,c_0^n,f^n,\psi^n$ approximating $s_0,c_0,f,\psi$ in the following sense: $(s_0^n)_n$ converges to $s_0$ in $W^{1,2}_x$, $(C_0-c^n)_n$ converges to $C_0-c$ in $W^{1,2}_x$ and hence in $L^\infty_x$ by Sobolev embedding, $(f^n)_n$ converges to $f$ in $C_t(L^2_x)\cap L^2_t(W^{1,2}_x)$ and $(f^n)_n$ is uniformly bounded in $L^\infty_{t,x}$, $(\psi^n)_n$  converges to $\psi$ in $C^{\beta-\epsilon}_t$, for $\epsilon>0$ given and sufficiently small such that $\beta-\epsilon>1/4$.

			Let $g^n$ be the solution to \eqref{eq:lin_PDE_sg_g} with data $c_0^n$, $f^n$. By \eqref{eq:stability_W12_g}, $(g^n)_n$ converges to $g$ in $L^\infty_t(W^{1,2}_x)$ (and in $C_t(L^2_x)$).
			For any $n\in \mathbb N$, we consider the functions $b^n=b_{g_n}$ and $\tilde\gamma^n=\tilde\gamma_{g_n}$ as in  \eqref{eq:beta2} and \eqref{def:tilde_gamma}. 
			 Therefore, by Assumption \ref{hp:main_result} and Proposition \ref{prop:bd_g}, the family of drifts $(b^n)_n$ converges to $b=b_g$ in $L^\infty_t(L^2_x)$  and the family of coefficients $(\tilde\gamma^n)_n$ is uniformly bounded in $L^\infty_{t,x}$ and converges to $\tilde\gamma=\tilde\gamma_g$ in $C_t(L^2_x)$, where $\gamma$ is given by \eqref{def:tilde_gamma}. Hence, we can apply the stability bound given in Proposition \ref{prop:stability_lin}: calling $s^n$ the solution to \eqref{eq:lin_PDE_sg_g} with good data $s_0^n,c_0^n,f^n,\psi^n$, the family $(s^n)_n$ converges to $s$ in the sense of \eqref{eq:approx_good_data}. By Sobolev embedding $W^{1,2}_x\to C_x$, the convergence is also uniform on $[0,T]\times (0,\infty)$. The proof is complete.
		\end{proof}
		
		\begin{proposition}\label{prop:max_principle}
			Let us assume that  Assumption \ref{hp:main_result} holds and that $f$ satisfies Assumption  \ref{hp_f}. Then, for any mild solution $s$ of the equation \eqref{eq:lin_PDE_s_mild}, we have   
			\begin{align}
				&0\le s(t,x)\le \eta,\quad\quad\qquad \forall (t,x)\in [0,T]\times [0,+\infty),\label{eq:s_less_eta_f}\\
				&\lim_{x\to +\infty}s(t,x) =0,\quad\quad\quad \forall t\in (0,T].\label{eq:s_zero_f}
			\end{align}
			Moreover, let consider the splitted representation $s=\tilde{u}+\tilde{v}$. Then, for some constant $C>0$ (depending only on $T_{fin}$)  we have  
			\begin{align}
				&0\le \tilde{u}(t,x)\le C\|\psi\|_{C([0,T])}\le C \eta,\hspace{1.8cm}\forall (t,x)\in [0,T]\times [0,+\infty),\label{eq:u_bounded_f}\\
				&|\tilde{v}(t,x)|\le C \eta,\hspace{4.65cm} \forall (t,x)\in [0,T]\times [0,+\infty),\label{eq:v_bounded_f}\\
				&\lim_{x\to +\infty}\tilde{u}(t,x) =\lim_{x\to +\infty}\tilde{v}(t,x) =0, \hspace{1.65cm}\forall \, t\in (0,T].\label{eq:u_v_zero_f}
			\end{align}
		\end{proposition}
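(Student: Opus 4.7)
The strategy is to handle the heat component $\tilde u$ directly from its explicit formula, then deduce the bounds on $s$ via a maximum principle applied to classical solutions arising from the good-data approximation of Proposition \ref{prop:wellposed_sg_approx}(iii), and finally read off the bounds on $\tilde v = s - \tilde u$ by subtraction.

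\textbf{Control of $\tilde u$.} The representation \eqref{eq:sol_heat}, combined with the explicit form of $-\partial_x G(t-\tau,x)$, which is nonnegative for $x>0$, and the nonnegativity $\psi\ge 0$ granted by Assumption \ref{hp:main_result}(B1), immediately yields $\tilde u\ge 0$. The upper bound $\tilde u(t,x)\le C\|\psi\|_{C([0,T])}\le C\eta$ and the exponential tail $\tilde u(t,x)\le C\eta e^{-cx}$ for $x>1$ are contained in the pointwise estimate \eqref{eq:bound_u_ptwise}; the tail gives the $\tilde u$-part of \eqref{eq:u_v_zero_f}.

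\textbf{Bound on $s$.} The key algebraic observation is that the zero-order coefficient $\tilde\gamma_g=-\lambda(1-Bf)g$ in \eqref{eq:lin_PDE_s_mild} is nonpositive: by Proposition \ref{prop:bd_g} we have $g\ge 0$, and $1-Bf\ge 0$ in both cases $B=\pm1$ (for $B=-1$ it equals $1+f\ge 1$; for $B=1$ Assumption \ref{hp:main_result}(B3) gives $\eta<1$, and then $f\le\eta$ from Assumption \ref{hp_f} yields $1-f>0$). For good data $s_0^n,c_0^n,f^n,\psi^n$, Proposition \ref{prop:wellposed_sg_approx}(ii) provides a classical $C^{1,2}$ solution $s^n$ to the corresponding linear equation with smooth bounded coefficients. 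The parabolic weak maximum principle, applied with initial and boundary data in $[0,\eta]$ and nonpositive $\tilde\gamma_{g^n}$ (and a standard perturbation $s^n\pm\varepsilon(1+t)$ to rule out escape to $x=+\infty$, using the a priori boundedness of $s^n$), delivers $0\le s^n\le\eta$. The uniform convergence $s^n\to s$ on $[0,T]\times(0,\infty)$ from Proposition \ref{prop:wellposed_sg_approx}(iii) transfers this bound to the mild solution, giving \eqref{eq:s_less_eta_f}; the bound \eqref{eq:v_bounded_f} follows immediately from $\tilde v=s-\tilde u$ together with the bounds above.

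\textbf{Vanishing at infinity and main obstacle.} For fixed $t\in(0,T]$, analyse the mild representation \eqref{eq:PDE_s_lin_mild} term by term: the $\tilde u$-contribution decays exponentially as above; the initial-data contribution $\bar G(t,\cdot)*_D s_0$ lies in $C_0(\mathbb R_+)$ because $s_0\in W^{1,2}(\mathbb R_+)$ vanishes at infinity (being absolutely continuous and square-integrable) and the Dirichlet heat semigroup preserves $C_0(\mathbb R_+)$; the remaining convolution is handled by noting that $b_g(\tau,\cdot)\partial_x s(\tau,\cdot)\in L^1(\mathbb R_+)$, as a product of two $L^2$ functions, while $\tilde\gamma_g(\tau,\cdot) s(\tau,\cdot)\in L^2(\mathbb R_+)$, so that $\bar G(t-\tau,\cdot)*_D(\cdot)$ of each term lies in $C_0$ for $\tau<t$, and the time-integrable estimates from Section 4 permit dominated convergence in the $\tau$-integral. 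Subtracting $\tilde u$ then yields the analogous property for $\tilde v$, completing \eqref{eq:s_zero_f}--\eqref{eq:u_v_zero_f}. The main obstacle is justifying the maximum principle on the unbounded spatial domain with only $L^2_x$ drift and bounded zero-order coefficient; this is bypassed by first working with good data, where all coefficients are smooth and bounded and the classical theory applies without issue, and then passing to the uniform limit afforded by Proposition \ref{prop:wellposed_sg_approx}(iii).
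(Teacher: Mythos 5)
Your proposal is correct and follows the same overall architecture as the paper: explicit heat-kernel representation and the pointwise estimate \eqref{eq:bound_u_ptwise} for $\tilde{u}$, reduction to classical solutions with good data via Proposition \ref{prop:wellposed_sg_approx} followed by uniform passage to the limit for $s$, and subtraction $\tilde{v}=s-\tilde{u}$ for the remaining claims. The one substantive difference is that the paper simply imports \eqref{eq:s_less_eta_f} and \eqref{eq:s_zero_f} in the good-data case from Guarguaglini--Natalini (Proposition 2.3 of the cited work), whereas you prove them: you identify the sign condition $\tilde{\gamma}_g=-\lambda(1-Bf)g\le 0$ (correctly using B3 when $B=1$), compare with the constant super/subsolutions $\eta$ and $0$, and give a term-by-term decay analysis of the mild formula for the vanishing at infinity. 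This makes the argument self-contained, which is a genuine gain. One small imprecision: the perturbation $s^n\pm\varepsilon(1+t)$ does not by itself localize the maximum on the unbounded half-line; for bounded classical solutions one should either use an auxiliary function growing in $x$ (e.g.\ $\varepsilon(x^2+Ct)$) or invoke the Phragm\'en--Lindel\"of version of the maximum principle, which is what your appeal to the a priori boundedness of $s^n$ implicitly amounts to. This is a standard repair and does not affect the validity of the proof.
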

		
		\begin{proof}
			If $f$, $s_0$, $c_0$ and $\psi$ are good data, \eqref{eq:s_less_eta_f} and \eqref{eq:s_zero_f} are given in \cite{2008_GN_CPDE}(Proposition 2.3). The general case follows by the approximation result in Proposition \ref{prop:wellposed_sg_approx}, namely approximation via solutions with good data. The positivity of $u$ follows from the positivity of $\psi$, by the representation formula \eqref{eq:sol_heat} since  $\partial_x G\le 0$. The uniform bound \eqref{eq:u_bounded_f} and the limit of $u$ in \eqref{eq:u_v_zero_f} follow from the bound \eqref{eq:bound_u_ptwise}. The uniform bound  \eqref{eq:v_bounded_f} follows from \eqref{eq:s_less_eta_f} and \eqref{eq:u_bounded_f}, while the limit of $v$ in \eqref{eq:u_v_zero_f}  follows from \eqref{eq:s_zero_f} and the first of \eqref{eq:u_v_zero_f}.
	
	\end{proof}
	
	Our main original contribution is given by the following proposition  and its corollary \ref{cor:a_priori_bd};   the crucial bound on the splitting variable $\tilde{v}$ that, together with the analogous bound for the splitting variable $\tilde{u}$ , (Proposition \ref{prop:heat_bd}), gives the final bound for the solution $s$ of the linear equation. The corollary that follows  states the generalization of this bound to the nonlinear equation.

		\begin{proposition}[Estimate for the linear system]\label{prop:main_bound_v}
		 Suppose that Assumption \ref{hp:main_result} holds. For every $f$ satisfying Assumption  \ref{hp_f} with a generic $K>0$, let $s$ be the solution of the equation \eqref{eq:lin_PDE_sg_s}, coupled with equation \eqref{eq:lin_PDE_sg_g}. Then,	there exists a constant $\mu \ge 0$, depending on $m$, $C_0$, $\varphi_m$, $\lambda$, $\eta$, $T_{fin}$,  $\|\psi\|_{C^\beta}$, $\|s_0\|_{L^2}$, $\|\partial_x c_0\|_{L^2}$ but not on $K$, such that, for every $T\in [0,T_{fin}]$,  satisfies the following bound   
			\begin{align}
				\sup_{t\in [0,T]}\|s(t,\cdot)\|_{L^2(\mathbb R_+)}^2 +\int_0^T \|\partial_x s(t,\cdot)\|_{L^2(\mathbb R_+)}^2 dt \le \mu +\frac12 \int_0^T \|\partial_x f(t,\cdot)\|_{L^2(\mathbb R_+)}^2 dt.\label{eq:main_bd_1}
			\end{align}
		In particular, for any $f$  satisfying Assumption  \ref{hp_f} with $K\ge 2\mu$,   for any $T\in [0,T_{fin}]$, we have
			%In particular, there exists $K_0>0$ (depending on $m$, $C_0$, $\varphi_m$, $\lambda$, $\eta$, $T_{fin}$) such that, for every $T\in [0,T_{fin}]$, for every $f$  satisfying Assumption  \ref{hp_f} with $K\ge K_0$, we have
			\begin{align}
				\sup_{t\in [0,T]}\|s(t,\cdot)\|_{L^2(\mathbb R_+)}^2 +\int_0^T \|\partial_x s(t,\cdot)\|_{L^2(\mathbb R_+)}^2 dt \le K.\label{eq:main_bd_2}
			\end{align}
 
		\end{proposition}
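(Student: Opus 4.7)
The plan is to establish \eqref{eq:main_bd_1} via an energy estimate performed on the splitting component $\tilde{v}=s-\tilde{u}$, rather than on $s$ directly, since $\tilde{v}$ satisfies a homogeneous Dirichlet condition at $x=0$ and a direct energy method on $s$ would produce a boundary flux involving $\psi$ that cannot be controlled. I would work with classical ($C^{1,2}$) solutions first, in the good-data setting of Definition \ref{hp:good_data}, and then extend to general mild solutions by the approximation scheme of Proposition \ref{prop:wellposed_sg_approx}(iii); the $L^\infty([0,T],W^{1,2})$ convergence there makes both sides of \eqref{eq:main_bd_1} pass to the limit. A key choice is to keep the equation in \emph{divergence} form $\partial_t(\varphi(g)s)=\partial_x(\varphi(g)\partial_x s)-\lambda\varphi(g)sg$, instead of the non-divergence form \eqref{eq:lin_PDE_s_mild}, so that the drift $b_g=B\partial_x g/\varphi(g)$ never appears multiplied by $\partial_x\tilde{v}$; otherwise the estimate would produce an uncontrollable term $\|\partial_x g\|_{L^2}^2\|\partial_x\tilde{v}\|_{L^2}^2$.

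Using $\partial_t g=-\lambda\varphi(g)fg$ to eliminate $\partial_t\varphi(g)$ and subtracting the heat equation satisfied by $\tilde{u}$, one derives
\begin{equation*}
\varphi(g)\partial_t\tilde{v}=\partial_x(\varphi(g)\partial_x\tilde{v})+B\,\partial_x g\,\partial_x\tilde{u}-\lambda\varphi(g)(\tilde{u}+\tilde{v})g(1-Bf).
\end{equation*}
I would multiply by $\tilde{v}$ and integrate over $\mathbb R_+$, using $\tilde{v}(t,0)=0$ and the spatial decay at infinity from Proposition \ref{prop:max_principle} to cancel boundary contributions, obtaining the identity
\begin{equation*}
\tfrac{1}{2}\tfrac{d}{dt}\!\int\varphi(g)\tilde{v}^2\,dx+\int\varphi(g)(\partial_x\tilde{v})^2\,dx=B\!\int\partial_x g\,\partial_x\tilde{u}\,\tilde{v}\,dx-\tfrac{B\lambda}{2}\!\int\varphi(g)fg\,\tilde{v}^2\,dx-\lambda\!\int\varphi(g)(\tilde{u}+\tilde{v})g(1-Bf)\tilde{v}\,dx.
\end{equation*}
Using $\varphi(g)\ge\varphi_m>0$ (see Remark \ref{rmk:pos_varphi}), the LHS controls $\|\tilde{v}\|_{L^2}^2$ and $\|\partial_x\tilde{v}\|_{L^2}^2$. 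The last two terms on the RHS are bounded by $C(\|\tilde{v}\|_{L^2}^2+\|\tilde{u}\|_{L^2}\|\tilde{v}\|_{L^2})$, exploiting the $L^\infty$ bounds on $f$, $g$, $\varphi(g)$ from Assumption \ref{hp_f} and Proposition \ref{prop:bd_g}, and will be absorbed via Gronwall.

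The hard part is the cross term $J:=\int\partial_x g\,\partial_x\tilde{u}\,\tilde{v}\,dx$, both because $\partial_x g$ is only in $L^2_x$ and because the coefficient of $\|\partial_x f\|_{L^2_{t,x}}^2$ in \eqref{eq:main_bd_1} must be exactly $1/2$. I would estimate
\begin{equation*}
|J|\le\|\tilde{v}\|_{L^\infty}\|\partial_x g\|_{L^2}\|\partial_x\tilde{u}\|_{L^2}\le\epsilon\|\partial_x g\|_{L^2}^2+\tfrac{1}{4\epsilon}\|\tilde{v}\|_{L^\infty}^2\|\partial_x\tilde{u}\|_{L^2}^2,
\end{equation*}
using the $L^\infty$ bound on $\tilde{v}$ from Proposition \ref{prop:max_principle} and the $L^\infty_tL^2_x$ bound on $\partial_x\tilde{u}$ from Proposition \ref{prop:heat_bd} applied with $q=2$---this is precisely where the assumption $\beta>1/4$ is used, since the proposition requires $q<1/(1-2\beta)$. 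After integration in time and application of the estimate $\int_0^T\|\partial_x g\|_{L^2}^2\,dt\le T\kappa(\|\partial_x c_0\|_{L^2}^2+\|\partial_x f\|_{L^2_{t,x}}^2)$ from Proposition \ref{prop:bd_g}, choosing $\epsilon=1/(2T_{\mathrm{fin}}\kappa)$ yields a coefficient of exactly $1/2$ in front of $\|\partial_x f\|_{L^2_{t,x}}^2$, with all other contributions absorbed into a constant $\mu$ that depends on $m,C_0,\varphi_m,\lambda,\eta,T_{\mathrm{fin}},\|\psi\|_{C^\beta},\|s_0\|_{L^2},\|\partial_x c_0\|_{L^2}$ but not on $K$. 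A Gronwall inequality for $t\mapsto\int\varphi(g)\tilde{v}^2\,dx$ then delivers the $L^\infty_t L^2_x$ and $L^2_t H^1_x$ bounds for $\tilde{v}$; combining with the bounds on $\tilde{u}$ from Proposition \ref{prop:heat_bd} and writing $s=\tilde{u}+\tilde{v}$ gives \eqref{eq:main_bd_1}. Finally, \eqref{eq:main_bd_2} follows immediately: under Assumption \ref{hp_f} with $K\ge 2\mu$, $\|\partial_x f\|_{L^2_{t,x}}^2\le K$ hence the RHS of \eqref{eq:main_bd_1} is at most $\mu+K/2\le K$.
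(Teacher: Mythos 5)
Your proposal follows essentially the same route as the paper's proof: an energy estimate on the zero-boundary component $\tilde v$ in divergence form, the cross term $\int \partial_x g\,\partial_x\tilde u\,\tilde v$ split by Young's inequality with $\|\partial_x\tilde u\|_{L^2}$ controlled via Proposition \ref{prop:heat_bd} with $q=2$ (whence $\beta>1/4$) and $\|\partial_x g\|_{L^2}$ via Proposition \ref{prop:bd_g}, the parameter $\epsilon$ tuned to produce the coefficient $1/2$, a good-data approximation, and \eqref{eq:main_bd_2} read off immediately. The only minor deviation is that you absorb the zeroth-order terms by Gronwall where the paper notes they have a favourable sign ($R_1\le 0$) and so avoids Gronwall altogether; with your variant the exponential factor $e^{CT_{fin}}$ also multiplies the $\|\partial_x f\|^2$ contribution, so $\epsilon$ must be chosen slightly smaller than the value you state — harmless, since $C$ does not depend on $K$.
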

		
		\begin{proof}
			By the approximation result in Proposition \ref{prop:wellposed_sg_approx}, it is enough to show the two bounds \eqref{eq:main_bd_1} and \eqref{eq:main_bd_2} for classical solutions arising from good data $f,s_0,c_0,\psi$. Hence we will now consider such classical solutions.		
			
			Let us remind that we may consider the splitting 
			$s=\tilde{u}+\tilde{v}$,
			where $\tilde{u}$ is solution of \eqref{eq:u}-\eqref{eq:u_boundary_condition} and $\tilde{v}$ is solution of \eqref{eq:v_lin}		
			We first obtain an equation for the  pore concentration for $\varphi(g)\tilde{v}^2$, where $\varphi$ is given by \eqref{eq:phi_c_AB}. By the chain rule, equations \eqref{eq:v_lin} and \eqref{eq:lin_PDE_sg_g}, we get the following  
			\begin{align*}
				\partial_t(\varphi(g)\tilde{v}^2) &= 2\varphi(g)\tilde{v}\partial_t \tilde{v} +\tilde{v}^2\partial_tg \\
				&= 2\tilde{v}\varphi(g)\partial_x^2 \tilde{v} +2\tilde{v}\partial_x(\varphi(g))\partial_x \tilde{v} -2\lambda\varphi(g)g(1-Bf)\tilde{v}^2 +2\partial_x g \partial_x \tilde{u} \tilde{v} -2\lambda\varphi(g)g \tilde{u}(1-Bf)\tilde{v} -\lambda\varphi(g)gf \tilde{v}^2\\
				&= 2\tilde{v}\partial_x(\varphi(g)\partial_x \tilde{v}) -\lambda\varphi(g)g(2+(1-2B)f)\tilde{v}^2 +2\partial_xg \partial_x \tilde{u} \tilde{v} -2\lambda\varphi(g)g\tilde{u}(1-Bf)\tilde{v}.
			\end{align*}
		Let us integrate   over  $(0,t)\times(0,+\infty)$. By means of the integration by parts in $x$ in the first term at the right hand side, since by Proposition \ref{prop:max_principle} $v(t,0)=0$ and $v(t,+\infty)=0$), we get
			\begin{align*}
				&\int_0^\infty \varphi(g_t)\tilde{v}_t^2 dx -\int_0^\infty \varphi(g_0)v_0^2 dx +2\int_0^t\int_0^\infty \varphi(g)(\partial_x \tilde{v})^2 dxdr\\
				&= -\int_0^t\int_0^\infty \lambda\varphi(g)g(2+(1-2B)f)\tilde{v}^2 dxdr +\int_0^t\int_0^\infty 2 \tilde{v}\partial_xg \partial_x \tilde{u}  dxdr +\int_0^t\int_0^\infty -2\lambda\varphi(g)g  (1-Bf)\tilde{u}\tilde{v} dxdr \\
				&=: R_1+R_2+R_3.
			\end{align*}
			By Proposition \ref{prop:max_principle} and Assumption \ref{hp:main_result}, we have $\tilde{u}\ge 0$, $f\ge 0$ and, for $B=1$, $f\le \eta<1$ and, for $B=-1$ $2+3f>0 $. Hence 
			$$ R_1 \le 0.$$   As the  $ R_2$ term concerns, Proposition \ref{prop:max_principle} gives  the $L^\infty$ uniform bound on $\tilde{v}$, inequality \eqref{eq:bound_nonlinear_linear1}  gives the $L^2$ bound on $\partial_x g$, while    the $L^2$ bound on $\partial_x u$ derives by Proposition \ref{prop:heat_bd}, since $\beta>1/4$; hence,  for $\epsilon>0$ to be determined later and the constants $C_\epsilon$ and $C$,   depending only on $\epsilon$ and only on $T_{fin}$, respectively,  we get what follows
			\begin{align*}
				 R_2&=\int_0^t\int_0^\infty 2\partial_xg \partial_x \tilde{u}\tilde{v}   dxdr\\
				&\le C_\epsilon \int_0^T \|v\partial_x \tilde{u}\|_{L^2_x}^2 dr +\epsilon \int_0^T\|\partial_x g\|_{L^2_x}^2 dr\\
				&\le C_\epsilon T\|\tilde{v}\|_{L^\infty_{t,x}}^2 \|\partial_x \tilde{u}\|_{L^\infty_t(L^2_x)}^2 +\epsilon \,T\,\|\partial_xg\|_{L^\infty_t(L^2_x)}^2\\
				&\le C C_\epsilon T \eta^2\|\psi\|_{C^\beta}^2 +\epsilon\, \kappa T \left(\|\partial_x c_0\|_{L^2_x}^2 +  \|\partial_x f\|_{L^2_t(L^2_x)}^2\right).
			\end{align*}
			For the term $ R_3$, we use the $L^\infty$ bound on $g$ given by \eqref{eq:bound_nonlinear_linear1}, the $L^1$ bound \eqref{eq:Lp_norm_u} on $u$ and the bound $|\psi|\le \eta$, the uniform bound on $v$ by Proposition \ref{prop:max_principle} and Assumption \ref{hp:main_result} and get
			\begin{align*}
				 R_3&= \int_0^t\int_0^\infty -2\lambda\varphi(g)g(1-Bf)\tilde{u}\tilde{v} dxdr\\
				&\le 2\lambda T\|g\|_{L^\infty_x}(A+B\|g\|_{L^\infty_x})\|\tilde{v}\|_{L^\infty_{t,x}}\|\tilde{u}\|_{L^\infty_t(L^1_x)}\\
				&\le C\lambda T\eta^2C_0(A+BC_0).
			\end{align*}
			Hence, by means of the estimates of the terms $R_1-R_3$ and  by considering the lower bound for $\varphi$ as in condition B2 of the Assumption \ref{hp:main_result}, we get the following space estimate in $L^2 $ for $\tilde{v}$ and its derivative
			\begin{align*}
				\int_0^\infty \tilde{v}_t^2 dx  +2\int_0^t\int_0^\infty (\partial_x \tilde{v})^2 dxdr &\le \varphi_m^{-1}\int_0^\infty \varphi(g_t)\tilde{v}_t^2 dx  +2\varphi_m^{-1}\int_0^t\int_0^\infty \varphi(g)(\partial_x \tilde{v})^2 dxdr\\
				&\le \varphi_m^{-1}C_0\|s_0\|_{L^2_x}^2 +C C_\epsilon \varphi_m^{-1} T \eta^2\|\psi\|_{C^\beta}^2 +\epsilon\varphi_m^{-1}\kappa T \|\partial_x c_0\|_{L^2_x}^2\\
				&\quad +C\lambda T \eta^2C_0(A+BC_0) +\epsilon \varphi_m^{-1}\kappa T \|\partial_x f\|_{L^2_t(L^2_x)}^2\\
				&=:A_\epsilon+\epsilon \varphi_m^{-1}\kappa T \|\partial_x f\|_{L^2_t(L^2_x)}^2.
			\end{align*}
			If we combine the latter estimate with the inequality \eqref{eq:u_bound}   for $\tilde{u}$, then, for some constant $\bar{C}$ depending only on $T_{fin}$, we get the following for $s=\tilde{u}+\tilde{v}$, 
			\begin{align*}
				\sup_{t\in [0,T]}\int_0^\infty s_t^2 dx  +\int_0^T\int_0^\infty (\partial_x s)^2 dxdr &\le 2\sup_{t\in [0,T]}\int_0^\infty \tilde{u}_t^2 dx  +2\int_0^T\int_0^\infty (\partial_x \tilde{u})^2 dxdr +2\sup_{t\in [0,T]}\int_0^\infty \tilde{v}_t^2 dx \\
				&\quad +2\int_0^T\int_0^\infty (\partial_x \tilde{v})^2 dxdr\\
				&\le \bar{C}\|\psi\|_{C^\beta}^2 + 3A_\epsilon+3\epsilon \varphi_m^{-1}\kappa T \|\partial_x f\|_{L^2_t(L^2_x)}^2.
			\end{align*}
			Now, let us  take $\epsilon>0$, sufficiently small  that $3 \epsilon \varphi_m^{-1}\kappa T_{fin}\le 1/2$; we get
			\begin{align*}
				\sup_{t\in [0,T]}\int_0^\infty s_t^2 dx  +\int_0^T\int_0^\infty (\partial_x s)^2 dxdr \le  \bar{C}\|\psi\|_{C^\beta}^2 +3A_{\epsilon} +\frac12 \|\partial_x f\|_{L^2_t(L^2_x)}^2.
			\end{align*}	
			So, inequality  \eqref{eq:main_bd_1} is proven, with $\mu=\bar{C}\|\psi\|_{C^\beta}^2 +3A_{\epsilon}$. If we  take the parameter $K$ in \eqref{eq:hp_f}  such that $K\ge 2 \mu$, we conclude that
			\begin{align*}
				\sup_{t\in [0,T]}\int_0^\infty s_t^2 dx  +\int_0^T\int_0^\infty (\partial_x s)^2 dxdr\le \frac{K}{2}+\frac{K}{2} =K.
			\end{align*}
			Hence,   inequality \eqref{eq:main_bd_2} is proven.
		\end{proof}

\medskip

	\begin{remark}\label{rmk_Natalini1}
	Proposition \ref{prop:main_bound_v} is the main point where the irregularity of the boundary condition $\psi$ does not let us use the results already proven in \cite{2008_GN_CPDE}. Precisely, in \cite{2008_GN_CPDE}(Proposition 2.4), the estimate on $s$ is obtained by multiplying the equation for $\varphi(g)s$ morally with $s-\psi$ and integrating by parts in time and space; in this procedure the time derivative of $\psi$ appears, something which cannot be controlled in our context. Here instead, in a certain sense we leave the irregularity due to $\psi$ into the $u$ term and consider the equation for $v$ instead, which has smooth (actually zero) boundary condition.
\end{remark}

\begin{remark}%\label{rmk_Natalini2}		
	Note that, unlike \cite{2008_GN_CPDE}(Proposition 2.4), the bound \eqref{eq:main_bd_1} in Proposition \ref{prop:main_bound_v} on the solution $s$ to the linear equation \eqref{eq:lin_PDE_sg_s} depends on the $C([0,T],L^2(\mathbb R_+))\cap L^2([0,T],W^{1,2}(\mathbb R_+))$ norm of $f$. The reason for this dependence is due to the appearance of the term $\partial_x g$, which can only be controlled by the $C([0,T],L^2(\mathbb R_+))\cap L^2([0,T],W^{1,2}(\mathbb R_+))$ norm of $u$. However, we can make this dependence small, that is, with a multiplicative constant less or equal than one half. This is enough to get the a priori estimate in Proposition \ref{cor:a_priori_bd} in the case of  the nonlinear equation \eqref{eq:s2}-\eqref{eq:gamma2}.
\end{remark}

		The next result deals with the contraction property of the operator  that maps  any $f$ in \eqref{eq:hp_f} into the solution of the linear equation,   for small time  $T$.
		
		\begin{proposition}\label{prop:contraction}
			 Suppose that Assumption \ref{hp:main_result} holds. There exists $\mu>0$ (as in Proposition \ref{prop:main_bound_v}) such that, for every $K\ge 2\mu$, we have: there exist $T\in [0,T_{fin}]$, $L<1$ (all depending on $m$, $C_0$, $\varphi_m$, $\lambda$, $\eta$, $T_{fin}$, \ $\|\psi\|_{C^\beta}$, $\|s_0\|_{L^2}$, $\|\partial_x c_0\|_{L^2}$ and $K$) such, that, for every $f^1$, $f^2$ satisfying \eqref{eq:hp_f} with $K$, calling $s^1$, $s^2$ the corresponding solutions, we have
			\begin{align*}
				\sup_{t\in[0,T]}\|s^1_t-s^2_t\|_{L^2(\mathbb R_+)}^2 +\int_0^T \|\partial_x s^1_t -\partial_x s^2_t\|_{L^2(\mathbb R_+)}^2 dt \le L \left( \sup_{t\in[0,T]}\|f^1_t-f^2_t\|_{L^2(\mathbb R_+)}^2 +\int_0^T \|\partial_x f^1_t -\partial_x f^2_t\|_{L^2(\mathbb R_+)}^2 dt \right).
			\end{align*}
		\end{proposition}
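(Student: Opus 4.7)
The plan is to mimic the energy estimate of Proposition \ref{prop:main_bound_v}, but now applied to the difference $w:=s^1-s^2$, and to extract a factor of $T$ from the stability estimate on $g$ provided by Proposition \ref{prop:approx_g}. First, since $\psi$, $s_0$, $c_0$ are common to both problems, the splittings $s^i=\tilde{u}+\tilde{v}^i$ share the same heat component $\tilde{u}$, hence $w=\tilde{v}^1-\tilde{v}^2$. By subtracting \eqref{eq:v_lin} for $i=1,2$, the function $w$ solves (in the mild or strong sense, after reducing to good data via Proposition \ref{prop:wellposed_sg_approx})
\begin{align*}
\partial_t w = \partial_x^2 w + b^1\partial_x w + \widetilde{\gamma}^1 w + (b^1-b^2)\partial_x(\tilde{v}^2+\tilde{u}) + (\widetilde{\gamma}^1-\widetilde{\gamma}^2)(\tilde{v}^2+\tilde{u}),
\end{align*}
with $w(0,\cdot)\equiv 0$ and $w(\cdot,0)\equiv 0$, where $b^i=b_{g^i}$, $\widetilde{\gamma}^i=\widetilde{\gamma}_{g^i}$, with $g^i$ as in \eqref{eq:g_explicit}.

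Next, as in the proof of Proposition \ref{prop:main_bound_v}, I would compute $\partial_t(\varphi(g^1)w^2)$ using the PDE for $w$ and the ODE \eqref{eq:lin_PDE_sg_g} for $g^1$, integrate over $(0,t)\times(0,\infty)$, and exploit the vanishing boundary and initial data for $w$ together with the decay at infinity (which follows from Proposition \ref{prop:max_principle} applied to each $s^i$). The principal part yields $2\int_0^t\int\varphi(g^1)(\partial_x w)^2\,dxd\tau$, while the lower-order terms arising from $b^1, \widetilde\gamma^1$ and from the first-order structure $\partial_t g^1 = -\lambda\varphi(g^1)f^1 g^1$ are sign-definite or absorbable exactly as in Proposition \ref{prop:main_bound_v}. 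The genuinely new contributions are the two source terms
\begin{align*}
R_{\text{new}} = 2\int_0^t\int\varphi(g^1)\,w\,\bigl[(b^1-b^2)\partial_x(\tilde{v}^2+\tilde{u}) + (\widetilde{\gamma}^1-\widetilde{\gamma}^2)(\tilde{v}^2+\tilde{u})\bigr]\,dxd\tau.
\end{align*}

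The crucial input is then the stability estimate for $g$: since $c_0^1=c_0^2$, Proposition \ref{prop:approx_g} gives
\begin{align*}
\sup_{t\in[0,T]}\|g^1-g^2\|_{W^{1,2}(\mathbb R_+)} + \sup_{t\in[0,T]}\|g^1-g^2\|_{L^\infty(\mathbb R_+)}\le \tilde{K}\,T\,\|f^1-f^2\|_{C([0,T],L^2)\cap L^2([0,T],W^{1,2})},
\end{align*}
which, combined with the explicit expressions \eqref{eq:beta2}, \eqref{def:tilde_gamma}, the lower bound $\varphi_m>0$ (Remark \ref{rmk:pos_varphi}), the uniform bound on $g^i$, and the 1D embedding $W^{1,2}\hookrightarrow L^\infty$, yields
\begin{align*}
\|b^1-b^2\|_{L^\infty([0,T],L^2(\mathbb R_+))} + \|\widetilde{\gamma}^1-\widetilde{\gamma}^2\|_{L^\infty([0,T]\times\mathbb R_+)} \le C\,T\,\|f^1-f^2\|_{C([0,T],L^2)\cap L^2([0,T],W^{1,2})}.
\end{align*}
Using Cauchy-Schwarz and Young on $R_{\text{new}}$, together with the uniform bounds on $w, \tilde{v}^2, \tilde{u}$ (Proposition \ref{prop:max_principle}), on $\partial_x\tilde{v}^2$ in $L^\infty_t L^2_x$ (a priori bound \eqref{eq:bound_v}), and on $\partial_x\tilde{u}$ in $L^\infty_t L^q_x$ for some $q>2$ (Proposition \ref{prop:heat_bd}, where $\beta>1/4$ is essential to have $q>2$), every contribution to $R_{\text{new}}$ is controlled by a small multiple of $\int_0^T\|\partial_x w\|_{L^2}^2dt$ (absorbed into the left-hand side) plus $C T^\delta (\|f^1-f^2\|)^2$ for some $\delta>0$, after invoking a Gronwall inequality to close the argument.

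Putting this together, I obtain a bound of the form
\begin{align*}
\sup_{t\in[0,T]}\|w_t\|_{L^2(\mathbb R_+)}^2 + \int_0^T\|\partial_x w_t\|_{L^2(\mathbb R_+)}^2 dt \le \widehat{C}\,T^\delta\Bigl(\sup_{t\in[0,T]}\|f^1_t-f^2_t\|_{L^2(\mathbb R_+)}^2 + \int_0^T\|\partial_x f^1_t-\partial_x f^2_t\|_{L^2(\mathbb R_+)}^2 dt\Bigr),
\end{align*}
with $\widehat{C}$ depending on the quantities listed in the statement. Choosing $T\in[0,T_{fin}]$ small enough so that $L:=\widehat{C}T^\delta<1$ concludes the proof. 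The main obstacle, as in Proposition \ref{prop:main_bound_v}, is the estimation of the term involving $(b^1-b^2)\partial_x\tilde{u}$: the only available integrability of $\partial_x\tilde{u}$ is in $L^q_x$ with $q<1/(1-2\beta)$, so the H\"older threshold $\beta>1/4$ is exactly what makes the energy estimate close.
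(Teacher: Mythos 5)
Your overall strategy is sound and is essentially the one the paper relies on: the paper itself disposes of this proposition in two lines, by invoking the argument of \cite{2008_GN_CPDE} (Proposition 2.6) with $K_\lambda$ replaced by $K\ge 2\mu$ and then reducing to good data via Proposition \ref{prop:wellposed_sg_approx}; the energy computation you sketch (same $\tilde u$ for both solutions, equation for $w=\tilde v^1-\tilde v^2$ in the conservative form $2w\,\partial_x(\varphi(g^1)\partial_x w)$, sign-definite zero-order term, source terms controlled through the stability of $g$ from Proposition \ref{prop:approx_g}, which is where the factor $T$ is gained) is exactly what that reference, and the paper's own later stability proposition for \eqref{eq:stability_linear}, carry out. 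Your reduction to good data, the identification $w=\tilde v^1-\tilde v^2$, and the bound $\|b^1-b^2\|_{L^\infty_t(L^2_x)}\le CT\|f^1-f^2\|$ are all correct.

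There is, however, one step that fails as written: the claim
$\|\widetilde\gamma^1-\widetilde\gamma^2\|_{L^\infty([0,T]\times\mathbb R_+)}\le C\,T\,\|f^1-f^2\|_{C([0,T],L^2)\cap L^2([0,T],W^{1,2})}$.
By \eqref{def:tilde_gamma} one has $\widetilde\gamma^1-\widetilde\gamma^2=-\lambda(g^1-g^2)+\lambda B(f^1-f^2)g^1+\lambda Bf^2(g^1-g^2)$; the pieces involving $g^1-g^2$ do gain a factor $T$ from Proposition \ref{prop:approx_g}, but the piece $\lambda B(f^1-f^2)g^1$ carries no such factor, and in any case the $L^\infty_x$ norm of $f^1-f^2$ is not controlled by its $C_t(L^2_x)\cap L^2_t(W^{1,2}_x)$ norm. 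The repair is the one the paper uses in its treatment of the analogous term $T_5$ in the proof of \eqref{eq:stability_linear}: measure $f^1-f^2$ in $L^2_x$ and pair it with the $L^\infty_{t,x}$ bounds on $s^2$, $g^1$ and $w$, so that after Young's inequality this contribution reads $C_\delta\int_0^T\|w\|_{L^\infty_x}^2\,dt+\delta\,\sup_t\|f^1-f^2\|_{L^2_x}^2\int_0^T\|s^2\|_{L^2_x}^2\,dt$, whose second summand is $\le \delta\,TK\sup_t\|f^1-f^2\|_{L^2_x}^2$ and is therefore small for small $T$ (or small $\delta$), while the first is absorbed via $\|w\|_{L^\infty_x}^2\le C\|w\|_{L^2_x}\|w\|_{W^{1,2}_x}$ and Gr\"onwall. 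With this correction your argument closes; note also that you do not actually need every term to carry a factor $T^\delta$ — it suffices that the surviving coefficient of $\sup_t\|f^1-f^2\|_{L^2}^2+\int_0^T\|\partial_x f^1-\partial_x f^2\|_{L^2}^2dt$ be strictly less than $1$ after the Gr\"onwall factor $e^{\bar CT}$, which is how the paper's \eqref{eq:stability_linear} (with constant $\tfrac12$) is arranged.
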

		
		\begin{proof}
			If $s^1$ and $s^2$ are solutions arising from good data, then one can repeat the arguments in the proof of \cite{2008_GN_CPDE}(Proposition 2.6), by replacing the constant $K_\lambda$ in \cite{2008_GN_CPDE} with the constant $K\ge 2\mu$, where $\mu$ is as in Proposition \ref{prop:main_bound_v}. The general case follows by the approximation result in Proposition \ref{prop:wellposed_sg_approx}.
			%we really need $f$ to converge in $C_t(L^2_x)\cap L^2_t(W^{1,2}_x)$
		\end{proof}
	
	\paragraph{The nonlinear system} Let us focus again on  the nonlinear PDE \eqref{eq:s2}-\eqref{eq:gamma2}. First of all we consider a corollary of Proposition \ref{prop:main_bound_v}.
	\begin{proposition}[Estimate for the nonlinear system]\label{cor:a_priori_bd}
	 Suppose that Assumption \ref{hp:main_result} holds. If $(s,c)$ is a mild solution to the nonlinear PDE \eqref{eq:s2}-\eqref{eq:gamma2}. Then there exists a constant $\mu\ge0$, depending on $m$, $C_0$, $\varphi_m$, $\lambda$, $\eta$, $T_{fin}$,  $\|\psi\|_{C^\beta}$, $\|s_0\|_{L^2}$, $\|\partial_x c_0\|_{L^2}$ but not on $K$, such that, for every $T\in [0,T_{fin}]$, if $(s,c)$ is a mild solution to the nonlinear PDE \eqref{eq:s2}-\eqref{eq:gamma2}, we have   
	\begin{align*}
		\sup_{t\in[0,T]}\|s(t,\cdot)\|_{L^2(\mathbb R_+)}^2 +\int_0^T \|\partial_x s(t,\cdot)\|_{L^2(\mathbb R_+)}^2 dt \le 2\mu.
	\end{align*}
\end{proposition}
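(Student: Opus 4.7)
The plan is to reduce the nonlinear statement to the linear estimate of Proposition \ref{prop:main_bound_v} by choosing the driving function to be the solution itself. Concretely, I first observe that if $(s,c)$ is a mild solution of the nonlinear system \eqref{eq:s2}-\eqref{eq:gamma2}, then plugging $f=s$ into \eqref{eq:lin_PDE_sg_g} yields exactly the ODE determining $c$ via \eqref{eq:df_mild_solution_c}, so the unique solution $g$ of the linearized ODE equals $c$. Consequently the linearized PDE \eqref{eq:lin_PDE_sg_s} coincides, in its mild formulation, with \eqref{eq:df_mild_solution_s}, and $s$ is a mild solution of the linear problem driven by $f=s$, coupled with $g=c$.

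Next, I verify that $f=s$ satisfies Assumption \ref{hp_f}. By Definition \ref{def:mild_solution_s_nonlin} we have $0\le s\le \eta$ pointwise, $s\in L^\infty([0,T],W^{1,2}(\mathbb R_+))$, the boundary trace equals $\psi$ via the mild representation, and $s\ge 0$. Continuity in time with values in $L^2(\mathbb R_+)$ follows from the mild formula \eqref{eq:df_mild_solution_s} by the argument in Remark \ref{rmk:time_continuity_lin}, applied with coefficients $b_c$ and $\gamma_c$ built from $c$ (which are in $L^\infty_t(L^2_x)$ and $L^\infty_{t,x}$ respectively, thanks to Proposition \ref{prop:bd_g}). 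Hence, for some constant $K$ (possibly large, depending on $s$ itself), $f=s$ fulfils all conditions of Assumption \ref{hp_f}.

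Now I invoke Proposition \ref{prop:main_bound_v} with this choice $f=s$. The crucial point is that the constant $\mu$ produced there is independent of $K$, so this substitution is not circular: it gives
\begin{equation*}
    \sup_{t\in[0,T]}\|s(t,\cdot)\|_{L^2(\mathbb R_+)}^2 + \int_0^T \|\partial_x s(t,\cdot)\|_{L^2(\mathbb R_+)}^2\, dt \le \mu + \tfrac12 \int_0^T \|\partial_x s(t,\cdot)\|_{L^2(\mathbb R_+)}^2\, dt.
\end{equation*}
Absorbing the derivative term on the right-hand side into the left yields
\begin{equation*}
    \sup_{t\in[0,T]}\|s(t,\cdot)\|_{L^2(\mathbb R_+)}^2 + \tfrac12\int_0^T \|\partial_x s(t,\cdot)\|_{L^2(\mathbb R_+)}^2\, dt \le \mu,
\end{equation*}
and multiplying by $2$ (or simply noting that the first summand is nonnegative) gives the claimed bound by $2\mu$.

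The only mildly delicate step is the verification that $f=s$ lies in Assumption \ref{hp_f}, and in particular the time-continuity in $L^2$; but this is already encoded in the mild-solution framework once Remark \ref{rmk:time_continuity_lin} is in hand. The conceptual crux — and what makes the whole argument nontrivial rather than tautological — is precisely the $K$-independence of $\mu$ in Proposition \ref{prop:main_bound_v}, which has been established in the previous section.
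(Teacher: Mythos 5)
Your proposal is correct and follows essentially the same route as the paper: take $f=s$, check Assumption \ref{hp_f} (which holds by the definition of mild solution and the time-continuity of Remark \ref{rmk:time_continuity_lin}), apply the $K$-independent bound \eqref{eq:main_bd_1} of Proposition \ref{prop:main_bound_v}, and absorb the half of the gradient term — legitimate since $s\in L^\infty([0,T],W^{1,2}(\mathbb R_+))$ makes that integral finite. Your write-up is in fact more detailed than the paper's one-line argument.
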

\begin{proof}
	If $(s,c)$ solves the nonlinear PDE \eqref{eq:s2}-\eqref{eq:gamma2}, one can consider $f=s$ and the conclusion follows from the bound \eqref{eq:main_bd_1} of   Proposition \ref{prop:main_bound_v}. .
\end{proof}

\begin{remark}	We note that if $(s,c)$ is a mild solution to \eqref{eq:s2}-\eqref{eq:gamma2}, then by Remark \ref{rmk:time_continuity_lin} $s\in C([0,T];L^2(\R^2))$.
	Furthermore,  we have that $(s,c)$ is a mild solution to \eqref{eq:s2}-\eqref{eq:gamma2} if and only if $(s,c)$ is a mild solution to \eqref{eq:lin_PDE_sg_s}-\eqref{eq:lin_PDE_sg_g} with $f=s$.
\end{remark}
The global  existence of a pathwise unique mild solution for the nonlinear equation is proven in the following theorem.
		\begin{theorem}\label{thm:main}
			Let us consider system \eqref{eq:s2}-\eqref{eq:gamma2} on $[0,T_{fin}]\times \mathbb R_+$. Suppose that Assumption \ref{hp:main_result}  holds.
			For any $T\in [0,T_{fin}]$, there exists a pathwise unique mild solution $(s,c)$, in the sense of Definition \ref{def:mild_solution_s_nonlin}.
		\end{theorem}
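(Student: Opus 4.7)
The plan is to realize $(s,c)$ as the fixed point of the map $\Phi:f\mapsto s$, where $s$ is the mild solution of the \emph{linearised} system \eqref{eq:lin_PDE_sg_s}-\eqref{eq:lin_PDE_sg_g} associated with $f$; once a fixed point $s^{*}=\Phi(s^{*})$ is obtained, the explicit formula \eqref{eq:df_mild_solution_c} with $s=s^{*}$ produces the matching $c^{*}$, and by construction $(s^{*},c^{*})$ is a mild solution of the nonlinear system \eqref{eq:s2}-\eqref{eq:gamma2}. Concretely, I would fix $\mu$ as in Proposition \ref{prop:main_bound_v} and set $K:=2\mu$. Let $\mathcal{B}_{K}$ denote the closed convex subset of $C([0,T],L^{2}(\mathbb{R}_{+}))\cap L^{2}([0,T],W^{1,2}(\mathbb{R}_{+}))$ consisting of functions satisfying Assumption \ref{hp_f} with this value of $K$. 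By Proposition \ref{prop:wellposed_sg_approx}, $\Phi$ is well defined on $\mathcal{B}_{K}$; the invariance $\Phi(\mathcal{B}_{K})\subset\mathcal{B}_{K}$ then follows by combining the energy bound \eqref{eq:main_bd_2}, the maximum principle \eqref{eq:s_less_eta_f}, the identity $s(t,0)=\psi(t)$ built into the mild formulation \eqref{eq:df_mild_solution_s}, and the time continuity in $L^{2}$ provided by Remark \ref{rmk:time_continuity_lin}.

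For local well-posedness, I would apply Proposition \ref{prop:contraction} with this $K=2\mu$: it furnishes a time $T_{1}\in(0,T_{fin}]$ on which $\Phi$ is a strict contraction on $\mathcal{B}_{K}$. The Banach fixed point theorem then yields a unique $s^{*}\in\mathcal{B}_{K}$ satisfying $\Phi(s^{*})=s^{*}$ on $[0,T_{1}]$, which, paired with $c^{*}$ as above, gives a bounded positive mild solution on $[0,T_{1}]$ in the sense of Definition \ref{def:mild_solution_s_nonlin}. Any competing mild solution on $[0,T_{1}]$ would automatically lie in $\mathcal{B}_{K}$ by the a priori estimate of Proposition \ref{cor:a_priori_bd} applied with $f=s$, and hence would coincide with $s^{*}$ by the uniqueness of the fixed point, giving pathwise uniqueness on $[0,T_{1}]$.

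To reach the full horizon $T_{fin}$, the plan is to iterate the above construction on consecutive intervals $[T_{k},T_{k+1}]$, restarting from $(s^{*}(T_{k},\cdot),c^{*}(T_{k},\cdot))$. The crucial observation is that the constant $\mu$ which ultimately controls the length $T_{k+1}-T_{k}$ of the local existence step depends only on $\|\psi\|_{C^{\beta}}$, on $\|s^{*}(T_{k},\cdot)\|_{L^{2}}$ and on $\|\partial_{x}c^{*}(T_{k},\cdot)\|_{L^{2}}$; by Proposition \ref{cor:a_priori_bd} and by Proposition \ref{prop:bd_g} applied with $f=s^{*}$, these two quantities are bounded uniformly in $T_{k}\in[0,T_{fin}]$ in terms of the original data only. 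Consequently, the length of the local step is bounded below by a universal $\delta>0$, finitely many iterations cover $[0,T_{fin}]$, and pathwise uniqueness propagates from each step to the next. The main technical point I expect to check is that the shifted boundary datum $\psi(T_{k}+\cdot)$ can still be accommodated within Assumption \ref{hp:main_result}: since the vanishing $\psi(0)=0$ is imposed only for convenience (cf.\ Remark \ref{rmk:pos_varphi}), this amounts to a routine decomposition $\psi(T_{k}+\cdot)=\psi(T_{k})+\widetilde{\psi}$ with $\widetilde{\psi}(0)=0$ together with a minor extension of the estimates of Sections 3 and 4 to admit a nonzero constant boundary value, none of which affects the quantitative structure of the argument.
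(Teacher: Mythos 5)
Your local argument coincides with the paper's: the same fixed-point map $f\mapsto s$ on the class of $f$ satisfying Assumption \ref{hp_f} with $K=2\mu$, invariance from Propositions \ref{prop:max_principle} and \ref{prop:main_bound_v}, contraction from Proposition \ref{prop:contraction}, and uniqueness of any competing mild solution via the a priori bound of Proposition \ref{cor:a_priori_bd}. The divergence --- and the gap --- is in the globalization. You propose to restart the problem at each $T_k$ with data $(s^{*}(T_k,\cdot),c^{*}(T_k,\cdot))$ and shifted boundary datum $\psi(T_k+\cdot)$, and you dismiss the fact that $\psi(T_k)\neq 0$ as a routine decomposition plus ``a minor extension of the estimates of Sections 3 and 4.'' This is exactly the point the paper flags as an obstruction (``we cannot use exactly this argument, because $s_{T^*}=\psi_{T^*}$ does not need to be zero''), and it is not cosmetic. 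The decomposition $\psi(T_k+\cdot)=\psi(T_k)+\widetilde{\psi}$ forces the constant $\psi(T_k)$ either into the heat part $\tilde u$ or into the part $\tilde v$. In the first case Proposition \ref{prop:heat_bd} fails: with boundary value $a\neq 0$ and zero initial condition the heat solution is $a\,\mathrm{erfc}\bigl(x/(2\sqrt{t})\bigr)$, whose spatial derivative has $L^q$ norm of order $t^{(1-q)/(2q)}$, so the uniform-in-time $W^{1,q}$ bound \eqref{eq:u_bound} is lost near the restart time. In the second case $\tilde v$ no longer vanishes at $x=0$, and the integration by parts producing $-2\int\varphi(g)|\partial_x \tilde v|^2$ in the proof of Proposition \ref{prop:main_bound_v} picks up an uncontrolled boundary term; repairing it by multiplying against $\tilde v-\psi$ in the style of \cite{2008_GN_CPDE} reintroduces $\dot\psi$, the very object the splitting is designed to avoid, while a further lifting $\tilde v-\psi(T_k)\chi(x)$ would require redoing the energy estimate with new terms rather than a minor extension.

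The paper circumvents the restart entirely: it sets $T^{*}=\sup\{t:\exists!\text{ solution on }[0,t]\}$, extends the solution to $[0,T^{*}]$ using the linear theory (Corollary \ref{cor:wellposed_s} and the bounds \eqref{eq:bound_nonlinear_linear1}--\eqref{eq:bound_nonlinear_linear2}), and then runs the contraction on the set $\tilde{\chi}^{T^{*}}_{K,T}$ of functions $f$ that \emph{agree with the already-constructed solution on} $[0,T^{*}]$, keeping the original time axis, the original initial data and the original boundary condition with $\psi(0)=0$. Since the estimate of Proposition \ref{prop:contraction} involves only differences $f^{1}-f^{2}$, which vanish on $[0,T^{*}]$ and have zero boundary trace, the nonzero value $\psi(T^{*})$ never enters. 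Your outline can be made rigorous either by adopting this continuation argument or by genuinely carrying out the constant-boundary-value lifting with the attendant modifications to Propositions \ref{prop:heat_bd} and \ref{prop:main_bound_v}; as written, the global step is asserted rather than proved.
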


		\begin{proof}
			Let $\chi_{K,T}$ denote the space of functions $f$ satisfying Assumption \ref{hp_f}, with $K$ and $T$ as in Proposition \ref{prop:main_bound_v} and Proposition \ref{prop:contraction}; it turns out that $\chi_{K,T}$ is a complete metric space endowed with the norm $C_t(L^2_x)\cap L^2_t(W^{1,2}_x)$. 
			
			\medskip
			
			We consider the solution map of the liner system $$F:\chi_{K,T} \to \chi_{K,T}, \qquad F(f)=s,$$ where, given a function $f$, $s$ is the first component of the solution $(s,g)$  to the linear system \eqref{eq:lin_PDE_sg_s}-\eqref{eq:lin_PDE_sg_g} driven by $f$. By Proposition \ref{prop:max_principle} and Proposition \ref{prop:main_bound_v}, the operator $F$ is well-defined and it is a contraction on $\chi_{K,T}$ (for small $T$ and large $K$), due to   Proposition \ref{prop:contraction}. Hence, we get existence and uniqueness of a fixed point of $F$, that is existence and uniqueness for \eqref{eq:s2}-\eqref{eq:gamma2}, for small $T>0$. For general $T>0$, we can use a strategy which is similar to the one followed in \cite{2008_GN_CPDE}. We take $K=2\mu$, with  $\mu$ as in Proposition \ref{prop:main_bound_v} and Proposition \ref{cor:a_priori_bd} and define
			\begin{align*}
				T^* =\sup\left\{t\in [0,T]\mid \exists ! \text{ mild solution $(s,c)$ to } \eqref{eq:s2}-\eqref{eq:gamma2}\right\}.
			\end{align*}
		
			First of all, we show that $(s,c)$ can be extended as a mild solution up to $T^*$ included and $s$ is such that it satisfies that  $\sup_{t\in [0,T^*]}\|s_t\|_{W^{1,2}_x}<\infty$. In order to do this, we take $f=s$, extended to $t=T^*$ for example taking $f_{T^*}=0$. With such $f$, $c$ can be extended up to $T^*$ included by the explicit formula \eqref{eq:g_explicit} and the bounds \eqref{eq:bound_nonlinear_linear1}-\eqref{eq:bound_nonlinear_linear2} still hold. By Corollary \ref{cor:wellposed_s}, there exists a unique mild solution $\tilde{s}$ on $[0,T^*]$ to \eqref{eq:lin_PDE_sg_s}, in particular $\tilde{s}$ is in $C_t(L^2_x)$ and satisfies $\sup_{t\in [0,T^*]}\|\tilde{s}_t\|_{W^{1,2}_x}<\infty$. By uniqueness of \eqref{eq:lin_PDE_sg_s}, $s$ and $\tilde{s}$ must coincide on $[0,T^*)$. Hence $s$ can be extended to a solution on $[0,T^*]$ by setting $s_{T^*}=\tilde{s}_{T^*}$ and it verifies $\sup_{t\in [0,T^*]}\|s_t\|_{W^{1,2}_x}<\infty$.
			
			Finally, we show that $T^*=T$. By contradiction, if $ T^*<T$ then we would like to take $s_{T^*}$ (which is in $W^{1,2}_x$) and $c_{T^*}$ as new initial conditions and apply the local well-posedness result to extend the unique solution for $T>T^*$. Unfortunately here we cannot use exactly this argument, because $s_{T^*}=\psi_{T^*}$ does not need to be zero and so we are outside Assumption \ref{hp:main_result}. However, we can consider a modified fixed point problem as follows.  We take the space $\tilde{\chi}^{T^*}_{K,T}$ of functions $f$ satisfying Assumption \ref{hp_f} and such that $f=s$ (the unique solution of \eqref{eq:s2}-\eqref{eq:gamma2} on $[0,T^*]\times [0,\infty)$ and we still consider the map $F$, restricted on $\tilde{\chi}^{T^*}_{K,T}$, such that $F(f)$ is the solution to the (first equation of) the linear system \eqref{eq:lin_PDE_sg_s}-\eqref{eq:lin_PDE_sg_g} driven by $f$. The proof of Proposition \ref{prop:contraction} (which is the proof of \cite{2008_GN_CPDE}(Proposition 2.6)) does not use the fact that $\psi(0)=0$ and so one can repeat the proof for $f$ in $\tilde{\chi}^{T^*}_{K,T}$ and get the following fact, for $K$ large enough and $T-T^*>0$ small enough, for some $L<1$: for every $f^1$, $f^2$ in $\tilde{\chi}^{T^*}_{K,T}$, calling $s^1=F(f^1)$, $s^2=F(f^2)$,
			\begin{align*}
				\sup_{t\in[T,T^*]}\|s^1_t-s^2_t\|_{L^2_x}^2 +\int_{T^*}^T \|\partial_x s^1_t -\partial_x s^2_t\|_{L^2_x}^2 dt \le L \left( \sup_{t\in[T^*,T]}\|f^1_t-f^2_t\|_{L^2_x}^2 +\int_{T^*}^T \|\partial_x f^1_t -\partial_x f^2_t\|_{L^2_x}^2 dt \right).
			\end{align*}
			Hence, for large $K$ and small $T-T^*$, $F$ is a contraction on $\tilde{\chi}^{T^*}_{K,T}$ and therefore it has a unique fixed point, that is there exists a unique mild solution $(s,c)$ on $[0,T^*+T]$. But this is in contradiction with the definition of $T^*$. Hence $T^*=T$. The proof is complete.
		\end{proof}

\paragraph{Stability with respect to the boundary condition.} In this paragraph we show a stability bound of the mild solution $(s,c)$ with respect to the boundary condition $\psi$. The stability result implies in particular that, for random adapted $\psi$, the (random) solution $(s,c)$ is adapted as well.

We start with stability for the linear system \eqref{eq:lin_PDE_sg_s}-\eqref{eq:lin_PDE_sg_g}:

\begin{proposition}
    Assume that $s_0$ and $c_0$ satisfy Assumption \ref{hp:main_result}. Let $\psi_1$, $\psi_2$ be two boundary conditions satisfying Assumptions \ref{hp:main_result}, $f_1$, $f_2$ satisfying Assumption \ref{hp_f}, let $(s_1,g_1)$, $(s_2,g_2)$ be the corresponding mild solutions to \eqref{eq:lin_PDE_sg_s}-\eqref{eq:lin_PDE_sg_g}. Then there exists $\bar{C}>0$, depending on $m$, $C_0$, $\varphi_m$, $\lambda$, $\eta$, $T_{fin}$, $\|\partial_xc_0\|_{L^2_x}$, $\|s_i\|_{L^2_t(W^{1,2}_x)}$ and $\|f_i\|_{L^2_t(W^{1,2}_x)}$, $i=1,2$ (but not directly on $\psi_i$ or on $f_i$) such that
    \begin{align}\label{eq:stability_linear}
        \sup_{t\in [0,T]}\|s_1-s_2\|_{L^2_x}^2 +\int_0^T\|\partial_x s_1 -\partial_x s_2\|_{L^2_x}^2 dt \le \bar{C}\|\psi_1-\psi_2\|_{C^\beta_t}^2 +\bar{C}\int_0^T \|s_1-s_2\|_{L^2_x}^2dt +\frac12\int_0^T\|f_1-f_2\|_{W^{1,2}_x}^2 dt.
    \end{align}
\end{proposition}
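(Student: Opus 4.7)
The plan is to adapt the energy estimate of Proposition \ref{prop:main_bound_v} to the difference $\delta s:=s_1-s_2$, using the splitting $s_i=\tilde{u}_i+\tilde{v}_i$. First, $\delta\tilde{u}:=\tilde{u}_1-\tilde{u}_2$ solves the heat equation \eqref{eq:u}-\eqref{eq:u_boundary_condition} with boundary datum $\delta\psi:=\psi_1-\psi_2$ and zero initial condition, so Proposition \ref{prop:heat_bd} directly gives
\begin{align*}
\sup_{t\in[0,T]}\|\delta\tilde{u}(t,\cdot)\|_{W^{1,2}(\mathbb R_+)}\le C\|\delta\psi\|_{C^\beta_t}.
\end{align*}
The bulk of the work is to estimate $w:=\tilde{v}_1-\tilde{v}_2$, which has the convenient boundary and initial conditions $w(t,0)=0$ and $w(0,\cdot)=0$, so the boundary terms from integration by parts in $x$ will vanish.

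Subtracting the equations \eqref{eq:v_lin} for $\tilde{v}_1$ and $\tilde{v}_2$ and rearranging cross terms via $b_{g_1}\partial_x\tilde{v}_1-b_{g_2}\partial_x\tilde{v}_2=b_{g_1}\partial_xw+(b_{g_1}-b_{g_2})\partial_x\tilde{v}_2$ (and analogously for the $\widetilde\gamma$ products and the $\tilde u$-source terms), I would then mimic the chain-rule computation in the proof of Proposition \ref{prop:main_bound_v}: write $\partial_t(\varphi(g_1)w^2)=2\varphi(g_1)w\partial_tw+Bw^2\partial_tg_1$, substitute the PDE just derived for $w$ and the ODE \eqref{eq:lin_PDE_sg_g} for $g_1$, exploit the identity $\varphi(g_1)b_{g_1}=\partial_x\varphi(g_1)$ to group the diffusion and drift parts into $2w\partial_x(\varphi(g_1)\partial_xw)$, and integrate over $[0,t]\times(0,\infty)$. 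Integration by parts in $x$ (justified first for classical solutions arising from good data and then extended by the approximation of Proposition \ref{prop:wellposed_sg_approx}) yields
\begin{align*}
\int_0^\infty\varphi(g_1)w_t^2\,dx+2\int_0^t\!\!\int_0^\infty\varphi(g_1)(\partial_xw)^2\,dx\,d\tau=\Sigma_1+\Sigma_2+\Sigma_3,
\end{align*}
where $\Sigma_1$ is a dissipative term which is non-positive under condition B3 of Assumption \ref{hp:main_result} (exactly as in Proposition \ref{prop:main_bound_v}), $\Sigma_2$ gathers the $\delta\tilde{u}$-contributions, and $\Sigma_3$ gathers the $(b_{g_1}-b_{g_2})$- and $(\widetilde\gamma_{g_1}-\widetilde\gamma_{g_2})$-contributions.

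The term $\Sigma_2$ will be handled using the above bound on $\delta\tilde{u}$ together with the uniform bounds on $\varphi(g_1)$ and $\partial_xg_1$ from Proposition \ref{prop:bd_g} and on $\tilde{v}_i$ from Proposition \ref{prop:max_principle}. For $\Sigma_3$ the key input is Proposition \ref{prop:approx_g} applied with $c_{0,1}=c_{0,2}$, which provides $\sup_t\|g_1-g_2\|_{W^{1,2}(\mathbb R_+)}\lesssim\|f_1-f_2\|_{C([0,T],L^2(\mathbb R_+))\cap L^2([0,T],W^{1,2}(\mathbb R_+))}$; combined with $\widetilde\gamma_{g_i}=-\lambda(1-Bf_i)g_i$, $b_{g_i}=B\partial_xg_i/\varphi(g_i)$, and the lower bound $\varphi(g_i)\ge\varphi_m$ from Remark \ref{rmk:pos_varphi}, this yields the pointwise and $L^2_x$ estimates on $\widetilde\gamma_{g_1}-\widetilde\gamma_{g_2}$ and $b_{g_1}-b_{g_2}$ that one needs. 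The hard part will be the term $\int_0^t\!\!\int_0^\infty w\,\partial_x(g_1-g_2)\,\partial_x\tilde{u}_2\,dx\,d\tau$, where $\partial_x(g_1-g_2)$ can only be controlled through $\partial_x(f_1-f_2)$: here I would apply Young's inequality with a small parameter $\epsilon>0$, using the uniform $L^\infty_{t,x}$ bound $|w|\le 2C\eta$ from Proposition \ref{prop:max_principle} and the $L^2_{t,x}$ bound on $\partial_x\tilde{u}_2$ from Proposition \ref{prop:heat_bd}, to dominate the term by a $\|\delta\psi\|_{C^\beta_t}^2$-contribution plus $\epsilon\int_0^T\|\partial_x(f_1-f_2)\|_{L^2(\mathbb R_+)}^2\,d\tau$; one then picks $\epsilon$ so that, after dividing by $\varphi_m$, the prefactor of $\int_0^T\|f_1-f_2\|_{W^{1,2}(\mathbb R_+)}^2\,d\tau$ is at most $1/2$, matching the target coefficient in \eqref{eq:stability_linear}. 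All remaining contributions are lower-order and, after combining with the step-one bound on $\delta\tilde{u}$ via $\delta s=\delta\tilde{u}+w$, produce the form $\bar{C}\|\delta\psi\|_{C^\beta_t}^2+\bar{C}\int_0^T\|\delta s\|_{L^2(\mathbb R_+)}^2\,d\tau$, which together with the previous estimate closes \eqref{eq:stability_linear}.
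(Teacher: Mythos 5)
Your overall architecture matches the paper's: reduce to good data, split $s_i=\tilde u_i+\tilde v_i$ so that the difference $w=\tilde v_1-\tilde v_2$ has zero boundary and initial data, run the energy identity for $\varphi(g_1)w^2$, control the coefficient differences through Proposition \ref{prop:approx_g}, and tune a small parameter to land the coefficient $1/2$ in front of $\int_0^T\|f_1-f_2\|_{W^{1,2}_x}^2dt$. However, your treatment of the term you yourself single out as the hard one,
\begin{equation*}
\int_0^t\!\!\int_0^\infty w\,\partial_x(g_1-g_2)\,\partial_x\tilde u_2\,dx\,d\tau ,
\end{equation*}
does not close. The ingredients you list are the absolute bound $|w|\le 2C\eta$ and the $L^2_{t,x}$ bound on $\partial_x\tilde u_2$; but $\tilde u_2$ is the heat solution attached to $\psi_2$, so $\|\partial_x\tilde u_2\|_{L^2_{t,x}}\lesssim\|\psi_2\|_{C^\beta_t}$ carries no smallness in $\psi_1-\psi_2$ or $f_1-f_2$. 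After Young's inequality the non-$\epsilon$ half is therefore $C_\epsilon\,\eta^2\|\psi_2\|_{C^\beta_t}^2$, a fixed constant, and you would end up with $\sup_t\|s_1-s_2\|_{L^2_x}^2\le \mathrm{const}+\dots$, which is not a stability estimate: the right-hand side of \eqref{eq:stability_linear} must vanish when $\psi_1=\psi_2$ and $f_1=f_2$. The term is a product of \emph{two} small factors, $w$ and $\partial_x(g_1-g_2)$, and one merely bounded factor, $\partial_x s_2$; by replacing $w$ with $2C\eta$ you discard one of the two sources of smallness, and no choice of $\epsilon$ recovers it.

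The fix, which is what the paper does for its terms $T_1$, $T_4$, $T_5$, is to keep $w$ inside the quadratic Young splitting: write $|2wX|\le C_\delta |w|^2+\delta|X|^2$ with $X$ the remaining product, use the interpolation $\|w\|_{L^\infty_x}^2\le C\|w\|_{L^2_x}\|w\|_{W^{1,2}_x}$ so that $C_\delta\int_0^T\|w\|_{L^\infty_x}^2\,dt$ becomes a Gronwall term $\int_0^T\|w\|_{L^2_x}^2dt$ plus a small multiple of $\int_0^T\|\partial_xw\|_{L^2_x}^2dt$ absorbed by the dissipation, and pair the bounded factor $\partial_xs_2$ with the small factor $b_{g_1}-b_{g_2}$ inside $\delta\|X\|^2\lesssim\delta\,\|\partial_xs_2\|_{L^2_t(L^2_x)}^2\,\|f_1-f_2\|^2_{L^2_t(W^{1,2}_x)}$, finally choosing $\delta$ so that the resulting prefactor is at most $1/2$. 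The same correction is needed in your $\Sigma_2$ (e.g.\ for $\int w\,\partial_xg_1\,\partial_x(\tilde u_1-\tilde u_2)$): invoking the uniform bound on $\tilde v_i$ from Proposition \ref{prop:max_principle} there produces only a first power of $\|\psi_1-\psi_2\|_{C^\beta_t}$ and hence again an additive constant after Young. Apart from this systematic misuse of the $L^\infty$ bound on $w$ where quadratic smallness is required, your outline agrees with the paper's proof.
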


The proof follows the lines of the proof of \cite{2008_GN_CPDE}(Proposition 2.6), but exploits the splitting strategy to deal with a non-zero boundary condition for the difference $s_1-s_2$. We recall the following inequality (consequence of Sobolev-Gagliardo-Nirenberg, or of \eqref{eq:prop_embedding} and \cite{2018_Lunardi}, Example 1.3.8):
\begin{align*}
\|v\|_{L^\infty_x}^2 \le C_1\|v\|_{W^{1/2,2}_x}^2 \le C_2 \|v\|_{L^2_x} \|v\|_{W^{1,2}_x}.
\end{align*}

\begin{proof}
   By a density argument (and Proposition \ref{prop:wellposed_sg_approx}), it is enough to show the result for $s_0$, $c_0$, $f_i$ and $\psi_i$ good data as in Definition \ref{hp:good_data}, for which $s_1$ and $s_2$ are $C^{1,2}([0,T]\times \R_+)$. We call $f=f_1-f_2$, $\psi=\psi_1-\psi_2$, $s=s_1-s_2$, $g=g_1-g_2$. We split $s=u+v$, where $u$ is the solution to the heat equation \eqref{eq:u}-\eqref{eq:u_initial_condition}-\eqref{eq:u_boundary_condition} (with $\psi$ as boundary condition) and $v=v_1-v_2$ (where $s_i=u_i+v_i$) satisfies   
   \begin{align}
   \begin{aligned}\label{eq:diff_eq}
       \partial_t(\varphi(g_1)v^2) &=
       2v\partial_x(\varphi(g_1)\partial_x v)
       +2v B\partial_x g_1 \partial_x u \\
       &\quad -2\lambda\varphi(g_1)g_1v^2(1-Bf_1)
       -2\lambda\varphi(g_1)g_1uv(1-Bf_1) \\
       &\quad +2v\varphi(g_1)\partial_x s_2 \left(\frac{B\partial_x g_1}{\varphi(g_1)}-\frac{B\partial_x g_2}{\varphi(g_2)}\right)
       -2v\lambda\varphi(g_1)s_2(g_1(1-Bf_1)-g_2(1-Bf_2)) \\
       &\quad -B\lambda \varphi(g_1)f_1g_1v^2 \\
       &=: I+T_1+T_2+T_3+T_4+T_5+T_6.
    \end{aligned}
   \end{align}
We integrate each term in time and space (in the following, $C$ represents a generic positive constant, which depends on $m$, $C_0$, $\varphi_m$, $\lambda$, $\eta$, $T_{fin}$ and may change from one line to another). Since $v(t,0)=0$, the term $I$ gives
\begin{align*}
    \int_0^T\int_{\R_+} I dxdt = -2\int_0^T\int_{\R_+} \varphi(g_1)|\partial_x v|^2 dxdt \le -C\int_0^T\int_{\R_+} |\partial_x v|^2 dxdt,
\end{align*}
where we have used that $\varphi(g_1)$ is bounded from below (see Remark \ref{rmk:pos_varphi}). Recalling the bound \eqref{eq:u_bound} on $u$, the term $T_1$ gives, for fixed $\delta>0$ and $C_\delta>0$ (possibly changing from one line to another),
\begin{align*}
    \left|\int_0^T\int_{\R_+} T_1 dxdt\right|
    &\le C\int_0^T\|v\|_{L^\infty_x}^2 dt
    +C(\|\partial_x g_1\|_{L^\infty_t(L^2_x)}^2+\|\partial_x g_2\|_{L^\infty_t(L^2_x)}^2) \|\partial_x u\|_{L^\infty_t(L^2_x)}^2 \\
    &\le C_\delta\int_0^T\|v\|_{L^2_x}^2 dt
    +\delta\int_0^T\|\partial_xv\|_{L^2_x}^2 dt
    +C(\|\partial_xf_1\|_{L^2_{t,x}}^2+\|\partial_xf_2\|_{L^2_{t,x}}^2+\|\partial_xc_0\|_{L^2_x}^2) \|\psi\|_{C^\beta_t}^2,
\end{align*}
where we have used the bound \eqref{eq:bound_nonlinear_linear2} on $g_i$.
The term $T_2$ gives
\begin{align*}
    \left|\int_0^T\int_{\R_+} T_2 dxdt\right|
    &\le C(1+\|g_1\|_{L^\infty_{t,x}}^2)(1+\|f_1\|_{L^\infty_{t,x}}) \int_0^T \|v\|_{L^2_x}^2 dt \\
    &\le C\int_0^T \|v\|_{L^2_x}^2 dt,
\end{align*}
where we have used that $g_1$ and $f_1$ are bounded respectively by $C_0$ and $\eta$ (see Proposition \ref{prop:bd_g}). Similarly, using \eqref{eq:u_bound}, the term $T_3$ gives
\begin{align*}
    \left|\int_0^T\int_{\R_+} T_3 dxdt\right|
    &\le C\int_0^T \|v\|_{L^2_x}^2 dt +C\int_0^T \|u\|_{L^2_x}^2 dt \\
    &\le C\int_0^T \|v\|_{L^2_x}^2 dt +C\|\psi\|_{C^\beta_t}^2,
\end{align*}
and the term $T_6$ gives
\begin{align*}
    \left|\int_0^T\int_{\R_+} T_6 dxdt\right|
    &\le C\int_0^T \|v\|_{L^2_x}^2 dt.
\end{align*}
The terms $T_4$ gives, for fixed $\delta>0$ and constants $\tilde{c}>0$ (depending on $m$, $C_0$, $\varphi_m$, $\lambda$, $\eta$, $T_{fin}$, but independent of $\delta$) and $C_\delta>0$ (possibly changing from one line to another),
\begin{align*}
    \left|\int_0^T\int_{\R_+} T_4 dxdt\right|
    &\le C_\delta\int_0^T \|v\|_{L^\infty_x}^2 dt +\delta \|\partial_x s_2\|_{L^2_t(L^2_x)}^2(1+\|g_1\|_{L^\infty_{t,x}}^2)\varphi_m^{-4}\cdot\\
    &\quad \quad \cdot \left[\|g_1-g_2\|_{L^\infty_t(L^\infty_x)}^2\|\partial_x g_1\|_{L^\infty_t(L^2_x)}^2 +\|\partial_xg_1-\partial_x g_2\|_{L^\infty_t(L^2_x)}^2\|g_2\|_{L^\infty_t(L^\infty_x)}^2\right] dt\\
    &\le C_\delta\int_0^T\|v\|_{L^2_x}^2 dt +\delta\int_0^T\|\partial_xv\|_{L^2_x}^2 dt \\
    &\quad +\delta \tilde{c}\|\partial_x s_2\|_{L^2_t(L^2_x)}^2 \|f_1-f_2\|_{L^2_t(W^{1,2}_x)}^2 (1+\|\partial_x f_1\|_{L^2_t(L^2_x)}^2+\|\partial_x c_0\|_{L^2_x}^2),
\end{align*}
where we recall that $\varphi(g_1)$ is bounded from below (by $\varphi_m>0$) and the Propositions \ref{prop:bd_g} and \ref{prop:approx_g} on $g$ and $g_i$. Similarly, the term $T_5$ gives
\begin{align*}
    \left|\int_0^T\int_{\R_+} T_5 dxdt\right|
    &\le C_\delta\int_0^T \|v\|_{L^\infty_x}^2 dt +\delta \| s_2\|_{L^\infty_{t,x}}^2(1+\|g_1\|_{L^\infty_{t,x}}^2)\cdot\\
    &\quad \quad \cdot \left[\|g_1-g_2\|_{L^\infty_t(L^\infty_x)}^2\|f_1\|_{L^2_t(L^2_x)}^2 +\|f_1-f_2\|_{L^2_t(L^2_x)}^2\|g_2\|_{L^\infty_t(L^\infty_x)}^2\right] dt \\
    &\le C_\delta\int_0^T\|v\|_{L^2_x}^2 dt +\delta\int_0^T\|\partial_xv\|_{L^2_x}^2 dt \\
    &\quad +\delta \tilde{c}\|f_1-f_2\|_{L^2_t(W^{1,2}_x)}^2 (1+\|f_1\|_{L^2_t(L^2_x)}^2+\|f_2\|_{L^2_t(L^2_x)}^2).
\end{align*}
where we have used that $s_i$ is bounded respectively by $\eta$ (see Proposition \ref{prop:max_principle}) and again the Propositions \ref{prop:bd_g} and \ref{prop:approx_g} on $g_i$. Integrating \eqref{eq:diff_eq} in time and space, we get, for constants $\bar{c}>0$, $\bar{C}>0$ (depending on $m$, $C_0$, $\varphi_m$, $\lambda$, $\eta$, $T_{fin}$, $\|\partial_xc_0\|_{L^2_x}$, $\|s_i\|_{L^2_t(W^{1,2}_x)}$ and $\|f_i\|_{L^2_t(W^{1,2}_x)}$),
\begin{align*}
    \|v(T)\|_{L^2_x}^2 +\int_0^T\|\partial_x v\|_{L^2_x}^2 dt \le C\|\psi\|_{C^\beta_t}^2 +C_\delta\int_0^T \|v\|_{L^2_x}^2dt  +\bar{c}\delta\int_0^T \|\partial_xv\|_{L^2_x}^2dt +\bar{c}\delta\int_0^T\|f\|_{W^{1,2}_x}^2 dt.
\end{align*}
Choosing $\delta$ with $\tilde{c}\delta\le 1/8$, combining the above bound on $v$ with $\|u\|_{L^\infty_t(W^{1,2}_x)}\le C\|\psi\|_{C^\beta_t}$ from \eqref{eq:u_bound}, we obtain for $s=u+v$ (possibly renaming $\bar{C}$)
\begin{align*}
    \|s(T)\|_{L^2_x}^2 +\int_0^T\|\partial_x s\|_{L^2_x}^2 dt \le \bar{C}\|\psi\|_{C^\beta_t}^2 +\bar{C}\int_0^T \|s\|_{L^2_x}^2dt +\frac12\int_0^T\|f\|_{W^{1,2}_x}^2 dt.
\end{align*}
The proof is complete.
\end{proof}

As a consequence, we get the stability bound for the nonlinear system \eqref{eq:s2}-\eqref{eq:gamma2}:

\begin{proposition}
    Let $\psi_1$, $\psi_2$ be two boundary conditions satisfying Assumptions \ref{hp:main_result}, let $(s_1,g_1)$, $(s_2,g_2)$ be the corresponding mild solutions to the nonlinear system \eqref{eq:s2}-\eqref{eq:gamma2} (as from Theorem \ref{thm:main}). Then there exists $\bar{C}>0$, depending on $m$, $C_0$, $\varphi_m$, $\lambda$, $\eta$, $T_{fin}$, $\|\partial_xc_0\|_{L^2_x}$ and $\|s_i\|_{L^2_t(W^{1,2}_x)}$ $\|f_i\|_{L^2_t(W^{1,2}_x)}$, $i=1,2$ (but not directly on $\psi_i$ or on $s_i$) such that
    \begin{align*}
        \sup_{t\in [0,T_{fin}]}\|s_1-s_2\|_{L^2_x}^2 +\frac12\int_0^{T_{fin}}\|\partial_x s_1 -\partial_x s_2\|_{L^2_x}^2 dt \le \bar{C}\|\psi_1-\psi_2\|_{C^\beta_t}^2.
    \end{align*}
\end{proposition}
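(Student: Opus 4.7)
The plan is to reduce the nonlinear stability bound to the linear stability bound already established in the previous proposition, by exploiting the fact that any mild solution $(s_i,c_i)$ to the nonlinear system \eqref{eq:s2}-\eqref{eq:gamma2} is also a mild solution to the linear system \eqref{eq:lin_PDE_sg_s}-\eqref{eq:lin_PDE_sg_g} with the choice $f_i=s_i$ (as remarked right after Proposition \ref{cor:a_priori_bd}). Taking this choice in the previous linear stability estimate \eqref{eq:stability_linear}, the right-hand side produces a term $\tfrac12\int_0^T\|s_1-s_2\|_{W^{1,2}_x}^2\,dt$, whose $\|\partial_x(s_1-s_2)\|_{L^2_x}^2$ contribution can be absorbed into the left-hand side (leaving a $\tfrac12$ factor matching the statement), while the $\|s_1-s_2\|_{L^2_x}^2$ contribution combines with the existing $\bar{C}\int_0^T\|s_1-s_2\|_{L^2_x}^2\,dt$ term.

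More precisely, first I would check that the hypotheses of the linear stability proposition are met with $f_i=s_i$: by the a priori bound in Proposition \ref{cor:a_priori_bd}, $s_i$ lies in $L^\infty([0,T],L^2(\mathbb R_+))\cap L^2([0,T],W^{1,2}(\mathbb R_+))$, and by Proposition \ref{prop:max_principle} also $0\le s_i\le \eta$ with $s_i(t,0)=\psi_i(t)$; thus $s_i$ satisfies Assumption \ref{hp_f} with a constant $K$ depending only on the data through Proposition \ref{cor:a_priori_bd}. Applying \eqref{eq:stability_linear} with $f_i=s_i$ and rearranging gives, for every $T\in[0,T_{fin}]$,
\begin{align*}
\sup_{t\in[0,T]}\|s_1-s_2\|_{L^2_x}^2+\tfrac12\int_0^T\|\partial_x s_1-\partial_x s_2\|_{L^2_x}^2\,dt\le \bar C\|\psi_1-\psi_2\|_{C^\beta_t}^2+\bigl(\bar C+\tfrac12\bigr)\int_0^T\|s_1-s_2\|_{L^2_x}^2\,dt,
\end{align*}
where the constants depend only on the quantities listed in the statement (through the controls on $\|s_i\|_{L^2_t(W^{1,2}_x)}$ provided by the a priori bound).

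Next, I would drop the positive gradient term on the left and apply Gr\"onwall's lemma to the resulting integral inequality
\begin{align*}
\|s_1(t)-s_2(t)\|_{L^2_x}^2\le \bar C\|\psi_1-\psi_2\|_{C^\beta_t}^2+\bigl(\bar C+\tfrac12\bigr)\int_0^t\|s_1-s_2\|_{L^2_x}^2\,d\tau,
\end{align*}
obtaining $\|s_1(t)-s_2(t)\|_{L^2_x}^2\le \bar C e^{(\bar C+1/2)T_{fin}}\|\psi_1-\psi_2\|_{C^\beta_t}^2$ for every $t\in[0,T_{fin}]$. Plugging this pointwise control back into the displayed inequality above absorbs the time-integral of $\|s_1-s_2\|_{L^2_x}^2$ into the $\|\psi_1-\psi_2\|_{C^\beta_t}^2$ term, and yields the claimed bound with a new constant $\bar C'$ depending only on $m$, $C_0$, $\varphi_m$, $\lambda$, $\eta$, $T_{fin}$, $\|\partial_x c_0\|_{L^2_x}$ and the a priori norms of $s_i$.

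The only delicate point, and the one I would check carefully, is that the constant in \eqref{eq:stability_linear} does not blow up when $f_i$ is taken to be $s_i$: the linear stability constant depends on $\|f_i\|_{L^2_t(W^{1,2}_x)}$, but with $f_i=s_i$ this is controlled independently of $\psi_i$ via Proposition \ref{cor:a_priori_bd}, which only uses the $C^\beta$ norm of $\psi_i$ and the initial data through the a priori bound $2\mu$. Hence the dependence on $\psi_i$ remains quadratic via the $C^\beta_t$ norm on the right-hand side, as required.
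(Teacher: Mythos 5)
Your proposal is correct and follows essentially the same route as the paper, which likewise proves the result by applying the linear stability bound \eqref{eq:stability_linear} with $f_i=s_i$ and invoking Gr\"onwall's lemma; your write-up simply makes explicit the absorption of the $\tfrac12\int_0^T\|\partial_x(s_1-s_2)\|_{L^2_x}^2\,dt$ term and the verification (via Propositions \ref{cor:a_priori_bd} and \ref{prop:max_principle}) that $s_i$ satisfies Assumption \ref{hp_f}, steps the paper leaves implicit.
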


\begin{proof}
    The bound is a consequence of \eqref{eq:stability_linear}, taking $s_i=f_i$ the solution to the nonlinear system \eqref{eq:s2}-\eqref{eq:gamma2} and applying Gr\"onwall lemma.
\end{proof}

Finally, we conclude adaptedness when $\psi$ is an adapted stochastic process:

\begin{corollary}
    Let $\beta>1/4$, let $(\Omega,\mathcal{A},(\mathcal{F}_t)_t,P)$ be a filtered probability space satisfying the standard assumption and let $\psi$ be a $(\mathcal{F}_t)_t$-progressively measurable process with paths in $C^\beta_t$ satisfying Assumption \ref{hp:main_result} (for possibly random $\eta>0$). Let $(s,c)$ be the corresponding mild solution to the nonlinear system \eqref{eq:s2}-\eqref{eq:gamma2}. Then $s$ is $(\mathcal{F}_t)_t$-progressively measurable as $L^2(\R_+)$-valued process.
\end{corollary}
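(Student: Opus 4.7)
The plan is to exploit three facts already established: pathwise uniqueness of the mild solution (Theorem~\ref{thm:main}), the stability estimate from the previous proposition, and time continuity of $s$ in $L^2$ (Remark~\ref{rmk:time_continuity_lin}). The idea is to show that the deterministic solution map is continuous (hence Borel) and causal in $\psi$, and then combine this with the progressive measurability of $\psi$ to first obtain $\mathcal{F}_t$-adaptedness and then upgrade to progressive measurability via time continuity.

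First, I would establish causality: the mild formulation \eqref{eq:df_mild_solution_s} only involves $\psi(\tau)$ for $\tau\le t$, and by Theorem~\ref{thm:main} the restriction to $[0,t]$ of the solution on $[0,T_{fin}]$ coincides with the unique mild solution on $[0,t]$ driven by $\psi|_{[0,t]}$. Hence $s(t,\cdot)$ is a deterministic function $S_t$ of $\psi|_{[0,t]}$ (and the fixed data $s_0$, $c_0$). Next, I would show that $S_t$ is Borel measurable from the admissible subset of $C^\beta([0,t])$ into $L^2(\mathbb{R}_+)$: setting $A_M:=\{\psi\in C^\beta([0,t]) : \psi(0)=0,\ 0\le\psi\le M,\ \|\psi\|_{C^\beta}\le M\}$, the a priori bound of Proposition~\ref{cor:a_priori_bd} makes the $L^2_t(W^{1,2}_x)$ norm of the associated solution uniformly bounded on $A_M$, so the constant $\bar{C}$ of the previous stability proposition is uniform on $A_M$ and $S_t$ is Lipschitz there. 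Since every admissible $\psi$ lies in some $A_M$, $S_t$ is continuous and hence Borel measurable on the whole admissible set.

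Third, progressive measurability of $\psi$ implies that $\omega\mapsto \psi(\cdot,\omega)|_{[0,t]}$ is $\mathcal{F}_t$-measurable as a $C^\beta([0,t])$-valued random element; composing with the Borel map $S_t$ yields $\mathcal{F}_t$-measurability of $s(t,\cdot)$ in $L^2(\mathbb{R}_+)$, i.e.\ adaptedness. Combined with time continuity of $s$ in $L^2$ from Remark~\ref{rmk:time_continuity_lin}, a continuous adapted process is progressively measurable, giving the conclusion.

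The main technical subtlety is the measurability of $\omega\mapsto \psi|_{[0,t]}$ into the non-separable Banach space $C^\beta$. This is handled by a standard localization over the $\mathcal{F}_t$-measurable sets $\{\|\psi|_{[0,t]}\|_{C^\beta}\le M\}$: on each of them the paths lie in a set which is compact (hence separable) in $C([0,t])$ by Ascoli--Arzel\`a, so the measurability reduces to the separable setting of uniform convergence, where it follows from progressive measurability of $\psi$ via standard arguments.
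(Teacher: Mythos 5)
Your proposal is correct and rests on the same two pillars as the paper's own (very terse) proof: the deterministic solution map $\psi\mapsto s$ is measurable thanks to the stability estimate, and one composes it with the (progressively measurable) path map $\omega\mapsto\psi(\cdot,\omega)$. The paper gets progressive measurability in one shot, by writing $(t,\omega)\mapsto s^\omega(t)$ as the composition of $(t,\omega)\mapsto(t,\psi|_{[0,T]}(\omega))$, measurable from $\mathcal{B}([0,T])\otimes\mathcal{F}_T$, with $(t,\gamma)\mapsto s^\gamma(t)$; you instead first prove adaptedness (using causality of the mild formulation plus uniqueness) and then upgrade to progressive measurability via the $L^2$-continuity in time from Remark \ref{rmk:time_continuity_lin} --- both routes are legitimate, and yours makes the causal structure explicit, which the paper leaves implicit. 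More importantly, your last paragraph addresses a point the paper silently skips: $C^\beta$ is non-separable, so measurability of $\omega\mapsto\psi|_{[0,t]}$ as a $C^\beta$-valued random element is not automatic from pointwise progressive measurability. Your localization to the sets $A_M=\{\|\psi\|_{C^\beta}\le M,\,0\le\psi\le M,\,\psi(0)=0\}$ is the right fix; to close it fully one should add that on $A_M$ the solution map is continuous for the \emph{uniform} topology, which follows because uniform convergence on a $C^\beta$-bounded set implies $C^{\beta-\epsilon}$-convergence by interpolation of H\"older norms, and the stability bound still applies with exponent $\beta-\epsilon>1/4$. With that one-line addition your argument is complete and, if anything, more careful than the published one.
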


\begin{proof}
    For every $T\le T_{fin}$, the map $[0,T]\times \Omega\ni (t,\omega)\mapsto s^\omega(t) \in L^2(\R_+)$ is the composition of the two maps $(t,\omega)\mapsto (t,\gamma(\omega)) \in [0,T]\times C^\beta([0,T])$, which is measurable from $\mathcal{B}([0,T])\otimes \mathcal{F}_T$ to $\mathcal{B}(C^\beta([0,T])$, and $(t,\gamma)\mapsto s^{\gamma(\omega)}(t)$, which is measurable from $\mathcal{B}(C^\beta([0,T])$ to $\mathcal{B}(L^2(\R_+))$. This shows progressive measurability.
\end{proof}

Clearly progressively measurability of $c$ holds as well (for example in the space of functions $g$ with $C_0-g\in L^2(\R_+)$), as a consequence of progressively measurability of $s$ and the explicit expression \eqref{eq:df_mild_solution_c} of $c$ in terms of $s$.

\section{Appendix}

For convenience of the reader here we recall some classical results of functional analysis related to fractional Sobolev spaces with some embedding results \cite{2012_dinezza_palatucci_valdinoci,2018_Lunardi,1978_triebel}. Furthermore, we give the basic definition related to the interpolation theory  \cite{1988_bennet_sharpley,2018_Lunardi,1964_Lions,1978_triebel}.  
\medskip
  
Let us remark the fact that, as in the classical case with integer exponents, the space $W^{k^\prime+\alpha^\prime,p}$ is
continuously embedded in $W^{k+\alpha,p}$, with  $k+\alpha \le k^\prime+\alpha^\prime$, $k, k^\prime$ integers and $\alpha,\alpha^\prime \in (0,1)$. The result holds in the limit case $\alpha=1$ \cite{2012_dinezza_palatucci_valdinoci}.
\begin{proposition}\cite{2012_dinezza_palatucci_valdinoci}\label{prop:Sobolev_embedding}
	Let $p\in [1,\infty]$, $k,k^\prime\in \mathbb N$, and $\alpha,\alpha^\prime\in (0,1)$, with $k+\alpha \le k^\prime+\alpha^\prime$. Then
	$$\|f\|_{W^{k+\alpha,p}(\mathbb R) }\le C \|f\|_{W^{k^\prime+\alpha^\prime,p}(\mathbb R) }.$$
	In particular, 
	$$W^{k^\prime+\alpha^\prime,p}(\mathbb R)  \subseteq W^{k +\alpha ,p}(\mathbb R).$$
	The previous hold also in the case $k=k^\prime=0$. Furthermore,
	$$W^{ 1,p}(\mathbb R)  \subseteq W^{\alpha,p}(\mathbb R).$$
\end{proposition}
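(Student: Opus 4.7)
The plan is to establish the inclusion from the definition \eqref{eq:def_norm_sobolev_fractional} by reducing everything to two seminorm comparisons plus the classical integer-order Sobolev embedding. Writing $\|f\|_{W^{k+\alpha,p}}^p=\|f\|_{L^p}^p+\|\partial^k f\|_{L^p}^p+[\partial^k f]_{W^{\alpha,p}}^p$, the $L^p$ pieces are handled by the standard embedding $W^{k^\prime,p}(\mathbb R)\subseteq W^{k,p}(\mathbb R)$ for $k\le k^\prime$, so the real content is to control the Gagliardo seminorm $[\partial^k f]_{W^{\alpha,p}}$. I would therefore isolate two claims and then chain them:
\textbf{(a)} if $g\in W^{\alpha^\prime,p}(\mathbb R)$ with $0<\alpha\le \alpha^\prime<1$, then $[g]_{W^{\alpha,p}}\le C\bigl([g]_{W^{\alpha^\prime,p}}+\|g\|_{L^p}\bigr)$;
\textbf{(b)} if $g\in W^{1,p}(\mathbb R)$ and $\alpha\in(0,1)$, then $[g]_{W^{\alpha,p}}\le C\bigl(\|g\|_{L^p}+\|g^\prime\|_{L^p}\bigr)$.
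Granting these, the general statement follows: if $k=k^\prime$ apply (a) to $g=\partial^k f$; if $k<k^\prime$ use the integer embedding $W^{k^\prime+\alpha^\prime,p}\hookrightarrow W^{k+1,p}$ and then apply (b) to $g=\partial^k f$. In particular (b) is exactly the last sentence of the proposition, and (a) with $k=k^\prime=0$ yields the intermediate inclusion for $k=k^\prime=0$.

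To prove (a), I would perform the change of variable $h=y-x$ and rewrite
\begin{equation*}
[g]_{W^{\alpha,p}}^p=\int_{\mathbb R}|h|^{-1-\alpha p}\Bigl(\int_{\mathbb R}|g(x+h)-g(x)|^p\,dx\Bigr)dh,
\end{equation*}
then split the outer integral into $\{|h|<1\}$ and $\{|h|\ge 1\}$. On $\{|h|<1\}$ the inequality $|h|^{-1-\alpha p}\le |h|^{-1-\alpha^\prime p}$ (since $\alpha\le\alpha^\prime$) gives a piece bounded by $[g]_{W^{\alpha^\prime,p}}^p$. On $\{|h|\ge 1\}$ use the trivial $L^p$ bound $\|g(\cdot+h)-g\|_{L^p}^p\le 2^p\|g\|_{L^p}^p$ and the integrability $\int_{|h|\ge 1}|h|^{-1-\alpha p}dh<\infty$, which holds because $\alpha>0$.

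For (b) I would use the same splitting. The $\{|h|\ge 1\}$ piece is identical. On $\{|h|<1\}$ the essential input is the standard translation estimate $\|g(\cdot+h)-g\|_{L^p}\le |h|\,\|g^\prime\|_{L^p}$ for $g\in W^{1,p}(\mathbb R)$ (proved for smooth $g$ by the fundamental theorem of calculus and Minkowski's inequality, then extended by density); it yields
\begin{equation*}
\int_{|h|<1}|h|^{-1-\alpha p}\,|h|^p\|g^\prime\|_{L^p}^p\,dh=\|g^\prime\|_{L^p}^p\int_{|h|<1}|h|^{p(1-\alpha)-1}dh<\infty,
\end{equation*}
where finiteness uses $\alpha<1$. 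Combining (a), (b), and the classical integer embedding produces $\|f\|_{W^{k+\alpha,p}}\le C\|f\|_{W^{k^\prime+\alpha^\prime,p}}$.

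The expected obstacle is essentially notational rather than substantive: the two cutoffs $\alpha>0$ and $\alpha<1$ are what make the dyadic splitting of the $h$-integral work, and one must verify that the translation bound $\|g(\cdot+h)-g\|_{L^p}\le |h|\,\|g^\prime\|_{L^p}$ really is available for all $p\in[1,\infty]$ (the case $p=\infty$ requires identifying $W^{\alpha,\infty}$ with the corresponding H\"older space and adapting the same splitting to $L^\infty$ seminorms in $h$). Apart from this, the argument is a routine set of estimates and does not rely on any structure beyond the definition \eqref{eq:def_norm_sobolev_fractional} and the standard integer-order Sobolev inclusions.
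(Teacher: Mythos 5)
Your proof is correct, but note that the paper does not actually prove this proposition: it is recalled from \cite{2012_dinezza_palatucci_valdinoci} (where it appears as Propositions 2.1 and 2.2) with no argument given, so the honest comparison is with that reference, and your argument is essentially the standard one found there: split the Gagliardo seminorm at $|h|=1$, use $|h|^{-1-\alpha p}\le |h|^{-1-\alpha' p}$ on the near-diagonal region together with the crude bound $\|g(\cdot+h)-g\|_{L^p}\le 2\|g\|_{L^p}$ off the diagonal for claim (a), and replace the near-diagonal estimate by the translation bound $\|g(\cdot+h)-g\|_{L^p}\le |h|\,\|g'\|_{L^p}$ for claim (b). Two points in your write-up deserve to be made explicit. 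First, in the chaining for $k<k'$ you need $\|\partial^k f\|_{L^p}+\|\partial^{k+1}f\|_{L^p}\le C\left(\|f\|_{L^p}+\|\partial^{k'}f\|_{L^p}\right)$; with the norm \eqref{eq:def_norm_sobolev_fractional} adopted in the paper, which records only the zeroth and top derivatives, this is not a trivial integer-order embedding but the classical Landau--Kolmogorov (Gagliardo--Nirenberg) interpolation inequality for intermediate derivatives on the line. It is true and standard, but it is a genuine extra ingredient and should be named or cited rather than folded into ``the standard embedding.'' Second, your caveat about $p=\infty$ is the right one and resolves itself: with the paper's definition the seminorm at $p=\infty$ is exactly the H\"older seminorm $\sup_{x\neq y}|g(x)-g(y)|/|x-y|^\alpha$, and both splittings go through verbatim with suprema over $\{|x-y|<1\}$ and $\{|x-y|\ge 1\}$ in place of integrals. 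With these two remarks inserted, your proof is complete and self-contained, which is more than the paper itself provides for this statement.
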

The Sobolev embedding results may be extended by considering different $L^p$ spaces, for various exponents $p$, provided some relations among the exponents.
\begin{proposition}\cite{2018_Lunardi,1978_triebel}%\label{prop:Sobolev_embedding2}
	Let $p,q\in [1,\infty]$, $s,s^\prime\in \mathbb R_+$, with $p\le q $ and $s^\prime \le s$. If 
	\begin{equation}\label{eq:prop_embedding_parameters}
		s  -\frac{1}{p}\ge s^\prime -\frac{1}{q}, 
	\end{equation} then  
	\begin{equation}\label{eq:prop_embedding} 
		W^{s,p}(\mathbb R)  \subseteq W^{s^\prime ,q}(\mathbb R).\end{equation} 
\end{proposition}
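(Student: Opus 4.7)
My approach would be to deduce this classical Sobolev embedding from two building blocks: the integer Sobolev--Gagliardo--Nirenberg embedding $W^{k,p}(\mathbb{R}) \hookrightarrow W^{k',q}(\mathbb{R})$ for integers $k\ge k'$ satisfying the same scaling relation $k-1/p\ge k'-1/q$, and the purely fractional endpoint $W^{\sigma,p}(\mathbb{R})\hookrightarrow L^{r}(\mathbb{R})$ for $\sigma\in(0,1)$ with $1/r=1/p-\sigma$, which is proved directly from the Gagliardo seminorm in \cite{2012_dinezza_palatucci_valdinoci} (Theorem 6.5). The monotonicity embedding of Proposition \ref{prop:Sobolev_embedding} would then absorb the remaining slack in the scaling inequality.

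I would first write $s=k+\sigma$ and $s'=k'+\sigma'$ with $k,k'\in\mathbb{N}_0$ and $\sigma,\sigma'\in[0,1)$. Using the definition \eqref{eq:def_norm_sobolev_fractional}, the embedding reduces to bounding $\partial^{k'}f$ in $W^{\sigma',q}(\mathbb{R})$ and the lower-order derivatives of $f$ in $L^q(\mathbb{R})$, hence to showing the embedding $W^{t,p}(\mathbb{R})\hookrightarrow W^{\sigma',q}(\mathbb{R})$ for $t:=s-k'\ge\sigma'$ satisfying $t-1/p\ge\sigma'-1/q$. For this reduced embedding I would invoke the real interpolation identifications
\[
W^{\sigma',q}(\mathbb{R})=(L^{q}(\mathbb{R}),W^{1,q}(\mathbb{R}))_{\sigma',q},\qquad W^{t,p}(\mathbb{R})=(W^{a,p}(\mathbb{R}),W^{a+1,p}(\mathbb{R}))_{t-a,p}
\]
from \cite{2018_Lunardi} (Example 1.3.8) and \cite{1978_triebel} (Section 4.5.2), choosing an integer endpoint $a\in\mathbb{N}_0$ such that the two anchor embeddings $W^{a,p}(\mathbb{R})\hookrightarrow L^{q}(\mathbb{R})$ and $W^{a+1,p}(\mathbb{R})\hookrightarrow W^{1,q}(\mathbb{R})$ both hold. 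Applying the interpolation functor of parameter $(t-a,p)$ (together with the elementary $\ell^p\hookrightarrow\ell^q$-type bound for $p\le q$ to pass from the Lorentz-type space $(\cdot,\cdot)_{\theta,p}$ into $(\cdot,\cdot)_{\theta,q}$) then produces the desired embedding.

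The main obstacle will be matching the scaling condition at both anchor points: the anchor embeddings require $a-1/p\ge-1/q$ and $(a+1)-1/p\ge 1-1/q$, both equivalent to $a\ge 1/p-1/q$, and one must verify that such an integer $a\le\lfloor t\rfloor$ can always be found under the hypothesis $t-\sigma'\ge 1/p-1/q$. In the subcritical regime this is immediate, while the critical cases $(a+1)p=1$ and $q=\infty$ require a small perturbation together with the monotonicity from Proposition \ref{prop:Sobolev_embedding} to conclude. Once these scaling verifications are in place, the embedding follows directly from the boundedness of real interpolation applied to bounded linear operators (here the two anchor inclusions).
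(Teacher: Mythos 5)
A preliminary remark: the paper does not actually prove this proposition; it is recalled in the Appendix as a classical embedding with a pointer to \cite{2018_Lunardi,1978_triebel}, so your argument has no in-paper counterpart and must stand on its own. Its architecture (reduce to a single fractional-to-fractional embedding, realize fractional Sobolev spaces as real interpolation spaces between integer-order ones, interpolate the inclusion between integer anchors, absorb the slack in \eqref{eq:prop_embedding_parameters} by Proposition \ref{prop:Sobolev_embedding}) is the natural one, but the interpolation step as written has a genuine gap.

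Real interpolation of the inclusion between the couples $(W^{a,p},W^{a+1,p})$ and $(L^{q},W^{1,q})$ yields $(W^{a,p},W^{a+1,p})_{\theta,p}\hookrightarrow(L^{q},W^{1,q})_{\theta,q}$ only for the \emph{same} parameter $\theta$ on both sides, i.e. $W^{a+\theta,p}\hookrightarrow W^{\theta,q}$. To realize the source space $W^{t,p}$ you must take $\theta=t-a\in(0,1)$, hence $a=\lfloor t\rfloor$, and the target you then obtain is $W^{t-\lfloor t\rfloor,q}$, not $W^{\sigma',q}$; monotonicity helps only if $t-\lfloor t\rfloor\ge\sigma'$, which the hypothesis does not guarantee (take $p=1$, $q=2$, $t=1.1$, $\sigma'=0.5$: condition \eqref{eq:prop_embedding_parameters} holds, the anchors are valid, yet interpolation lands in $W^{0.1,2}\not\subseteq W^{0.5,2}$). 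If instead you fix $\theta=\sigma'$ and first lower the source to $W^{a+\sigma',p}$ by monotonicity, you need an integer $a$ with $1/p-1/q\le a\le t-\sigma'$, and such an integer need not exist: for $p=1$, $q=2$, $t=0.9$, $\sigma'=0.3$ the admissible window is $[0.5,0.6]$, while $a=0$ makes both anchor inclusions ($L^{1}(\mathbb R)\subseteq L^{2}(\mathbb R)$ and $W^{1,1}(\mathbb R)\subseteq W^{1,2}(\mathbb R)$) false. Both examples satisfy the hypotheses of the proposition, so integer-smoothness anchors cannot cover the general case; this is precisely the ``matching'' obstacle you flag, but it is not resolved by a small perturbation. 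The standard repair is to prove first the \emph{sharp} embedding $W^{\tau,p}\hookrightarrow W^{\tau-(1/p-1/q),q}$ for all admissible $\tau$ --- e.g. via the Besov-scale embeddings $B^{\tau}_{p,p}\hookrightarrow B^{\tau-(1/p-1/q)}_{q,p}\hookrightarrow B^{\tau-(1/p-1/q)}_{q,q}$ for different metrics in \cite{1978_triebel}, together with the identification of $W^{\tau,p}$ with $B^{\tau}_{p,p}$ for non-integer $\tau$ and a separate (Triebel--Lizorkin) treatment of integer smoothness --- and only then compose with Proposition \ref{prop:Sobolev_embedding} to absorb the slack. As it stands, the final step ``the embedding follows directly from the boundedness of real interpolation applied to the two anchor inclusions'' does not go through.
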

All the previous results are applied also to $\mathbb R_+$.

\section*{Acknowledgments}
The authors are members of GNAMPA (Gruppo Nazionale per l’Analisi Matematica, la Probabilità e le loro Applicazioni) of the Italian Istituto Nazionale di Alta Matematica (INdAM).

   \end{document}